\theoremstyle{plain}
\newtheorem{theorem}{Theorem}[section]
\newtheorem*{theorem*}{Theorem}
\newtheorem*{conj*}{Conjecture}
\newtheorem{lemma}[theorem]{Lemma}
\newtheorem{ex}[theorem]{Example}
\newtheorem{proposition}[theorem]{Proposition}
\newtheorem{computational theorem}[theorem]{Computational Theorem}
\theoremstyle{definition}
\newtheorem{remark}[theorem]{Remark}
\newtheoremstyle{named}{}{}{\itshape}{}{\bfseries}{.}{.5em}{\thmnote{#3 }#1}
\theoremstyle{named}
\newtheorem*{namedtheorem}{Theorem}
\newcommand{\C}{\mathbb{C}}
\newcommand{\K}{\mathbb{K}}
\newcommand{\Z}{\mathbb{Z}}
\newcommand{\N}{\mathbb{N}}
\newcommand{\e}{\varepsilon}
\renewcommand{\H}{{\mathbb{H}}}
\newcommand{\la}{\langle}
\newcommand{\ra}{\rangle}
\newcommand{\U}{\mathcal{U}}
\newcommand{\s}{\mathcal{S}}
\newcommand{\M}{\mathcal{M}}
\newcommand{\T}{\mathcal{T}}
\newcommand{\ox}{\alpha}
\newcommand{\oy}{\beta}
\newcommand\conju[1]{\widetilde{#1}}
\renewcommand{\a}{x}
\renewcommand{\b}{y}
\newcommand{\xy}{(x,y)\mbox{-intersection points}}
\newcommand{\xyp}{(x',y')\mbox{-intersection points}}
\theoremstyle{plain} 
\newcommand{\thistheoremname}{}
\newtheorem*{genericthm*}{\thistheoremname}
\newcommand\map[3]{\mbox{${#1}\colon {#2} \longrightarrow {#3}$}}
\newcommand\smap[2]{\mbox{${#1}\colon {#2} \longrightarrow {#2}$}}
\begin{document}
\title[The Lie bracket of   undirected  curves on a surface]{The   Lie  bracket of undirected closed curves on a surface }

\author{Moira Chas}
\address{Stony Brook Mathematics Department and Institute for Mathematical Sciences}
\email{moira.chas@stonybrook.edu}

\author{ Arpan Kabiraj}
\address{Department of Mathematics, Indian Institute of Technology Palakkad}
\email{arpaninto@iitpkd.ac.in}

\begin{abstract}
A Lie bracket defined on the linear span of the free homotopy  classes of undirected closed curves   was discovered in stages passing through  Thurston's  earthquake deformations, Wolpert's  corresponding calculations with Hamiltonian vector fields and Goldman's algebraic treatment of the  latter leading to a Lie  bracket on the span of directed closed curves.  The purpose of this work is to deepen the understanding of the former Lie bracket which will be referred to as the Thurston-Wolpert-Goldman Lie bracket or, briefly, the TWG bracket.  

We give a local direct geometric  definition of the TWG bracket and  use this geometric point of view to prove three  results: 
firstly, the center of the TWG-bracket is the Lie  sub algebra generated by the class of the trivial loop and the  classes of loops parallel to  boundary components or punctures; 
secondly the analogous result hold for the centers of the universal enveloping algebra and of the symmetric algebra determined by the TWG  Lie  algebra; 
and thirdly, 
 in terms of the natural basis, the TWG bracket of two non-central  curves   is always a linear combination of non-central curves.
We also  give a brief and more illuminating proof of a known result, namely, the TWG bracket counts intersection. 

We conclude by discussing  substantial  computer evidence suggesting an unexpected and strong  conjectural statement relating the intersection structure of curves and the TWG bracket, namely, if the TWG bracket of two distinct undirected  curves is zero then these curve classes have disjoint representatives.

The main tools are basic hyperbolic geometry and Thurston's earthquake theory.

\end{abstract}

\maketitle

%\tableofcontents

\section{Introduction} 

\subsection{ Geometric definition of the Thurston-Wolpert-Goldman Lie algebra}

Let $\Sigma$ be an oriented Riemann surface, not necessarily of finite type, which carries a complete metric of constant curvature  $-1$.

Given two undirected  curves $x$ and $y$  on $\Sigma$ and a transversal  intersection point  $P$ of $x$ and $y$, one can define a new   curve  ${(x*_P y)}_0$ as follows: orient $x$ and $y$ in such a way that the orientation of the surface  agrees with the orientation determined by the two  ordered oriented branches  of $x$ and $y$ emanating from $P$,  then  cut the two branches at $P$ and reconnect the curves following the new orientations (that is, perform the usual loop product at $P$), and then  discard the orientation of the composed curve.

A second new curve ${(x*_P y)}_\infty$ is defined similarly as ${(x*_P y)}_0$  but orientating $x$ and $y$ so that the orientation determined by the two oriented branches emanating from $P$ disagree with the orientation of the surface.  Figure~\ref{fig:term} shows the two possible re-connections around $P$.  

\begin{figure}[ht]
  \centering
    \includegraphics[trim =60mm 65mm 40mm 35mm, clip, width=13cm]{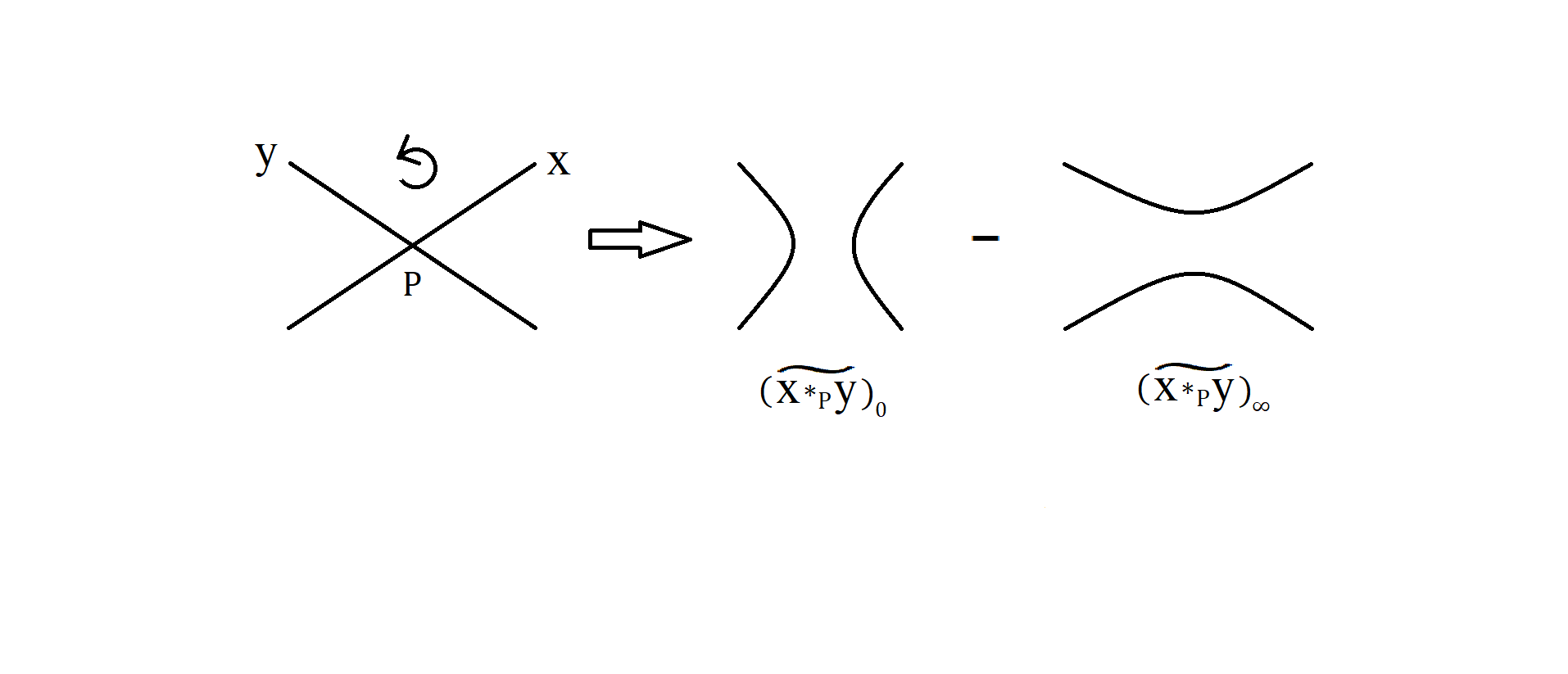}
     \caption{Local picture of the two smoothings associated with an intersection point $P$ between two undirected curves $x$ and $y$.
     }
     \label{fig:term}
\end{figure}

The free homotopy class of a curve $x$  on $\Sigma$ is denoted by $\conju{x}$. (Unless explicitly noted, all curves considered are  undirected).  Let  $\K$ be a commutative ring containing the integers. The set of free homotopy classes of undirected  closed curves  is denoted by  $\conju{\pi}$ and the free module generated by $\conju{\pi}$ over a ring $\K$ is denoted by $\K\conju{\pi}$.   We now   give the definition of a Lie algebra  on $\K\conju{\pi}$,   by giving the bracket of a pair of elements of the basis $\conju{\pi}$, and then, extending bilinearly. 

Consider two   curves $x$ and $y$ intersecting only in transversal  double points. We define geometrically $\left[\conju{\a},\conju{y}\right]$ as the sum over all the intersection points of $x$ and $y$ of the free homotopy classes of  two signed smoothings associated with each intersection point of $x$ and $y$. In symbols, 
 $$
 \left[\conju{\a},\conju{y}\right] =\sum_{P \in x \cap y } \conju{(x*_P y)}_0-\conju{(x*_P {y})}_\infty.
 $$Clearly,  if $\conju{\a}$ and $\conju{y}$ have  disjoint representatives,  $[\conju{\a},\conju{y}]=0$.
We extend the bracket linearly to $\K\conju{\pi}$ and call this structure, the  \emph{Thurston-Wolpert-Goldman Lie algebra of undirected curves, or briefly, the TWG Lie algebra}. 

 \begin{remark} Note that the smoothings at an intersection point can be defined for each pair of branches if the intersection is transversal, but the point could have multiplicity larger than two.  More about this  in Subsection~\ref{subs:intersection points}.
 \end{remark}

\begin{remark} The  definition of the Goldman Lie algebra of directed curves in \cite{goldman_invariant_1986}   was geometric. However the TWG Lie algebra of undirected curves, also defined in \cite{goldman_invariant_1986} is defined, using an algebraic argument,  as a sub Lie algebra of the Goldman Lie algebra of directed curves. See Section \ref{sec:Goldman} for details.
\end{remark}
Wolpert \cite[Theorem 4.8]{wolpert1983symplectic}, discovered a sub-Lie algebra of vector fields, \textit{the twist lattice},  on the linear span of the Fenchel-Nielsen vector fields associated to curves  (which are the infinitesimal generators of the earthquakes along these curves) and gave a topological description of this Lie algebra. Goldman  \cite{goldman_invariant_1986} showed that the twist lattice Lie algebra is the homomorphic image of a more basic Lie algebra, the TWG-Lie algebra. Namely, he proved  \cite[Theorem 5.2 and \S 5.12]{goldman_invariant_1986} that  the TWG bracket (defined above)  is  skew-symmetric and satisfies the Jacobi identity. According to Goldman, the embedding of the TWG Lie algebra in the Goldman Lie algebra -which he used to define the TWG algebra - was first observed by Dennis Johnson.

The TWG  Lie algebra, and its ``cousin", the Goldman Lie algebra (see Section~\ref{sec:Goldman} for a definition) are infinite-dimensional and still have many mathematical ``secrets" to reveal.  In this work, we make extensive use of hyperbolic geometry  to make these Lie algebras ``talk'' about their secrets.
\subsection{Main results, road maps for the proofs and an unexpected conjecture}

\subsubsection{The Center of the TWG Lie algebra}
Recall that the center of a Lie algebra on $V$ is the set of elements $v \in V$ such that $[v,w]=0$ for all $w \in V$. Etingof in \cite{etingof2006casimirs} proved using representation theory that the center of the Goldman Lie algebra of directed curves on a closed surface and coefficients in $\C$ is generated by the (class of the) trivial loop.  It is not hard to extend Etingof's result to other rings containing $\Z$. The second author  in his Ph.D. thesis,  determined that the center of the Goldman Lie algebra of curves on surfaces with boundary is generated by the trivial loop together with all curves parallel to the boundary components \cite{kabiraj2016center}. His proof treats all cases (closed surfaces or with boundary) geometrically.  

In  Section \ref{sec:center thm}, using geometric methods,  we study the center of the TWG Lie algebra of undirected curves:

\begin{namedtheorem}[Center] The center of the  TWG Lie algebra of   curves is linearly generated by the class of curves homotopic to a point,  and the classes of  curves  winding multiple times around a single  puncture or  boundary component.  
\end{namedtheorem}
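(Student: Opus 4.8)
The plan is to prove the two inclusions separately. The easy direction is that every curve of the stated type is central: if $\conju{x}$ is trivial or winds multiple times around a single puncture or boundary component, then for any undirected curve $\conju{y}$ one can choose a representative of $x$ in an annular neighborhood of a puncture/boundary (or a point) disjoint from $y$, so every term in $[\conju{x},\conju{y}]$ vanishes term-by-term, hence $\conju{x}$ is in the center. The substance is the converse: if $\conju{x}$ is \emph{not} of this form, produce some $\conju{y}$ with $[\conju{x},\conju{y}] \neq 0$.

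For the converse I would first reduce to the primitive case. Write $\conju{x} = \conju{z}^n$ for a primitive class $\conju{z}$; since $z$ is primitive and not boundary-parallel (nor trivial), a geodesic representative of $z$ has self-intersections \emph{or} there is another curve meeting it essentially. The natural candidate for the test curve $\conju{y}$ is $\conju{z}$ itself (or its geodesic representative perturbed to be transverse to the geodesic $x$). The key geometric input is Thurston's earthquake picture and basic hyperbolic geometry: realize $x$ as a geodesic, pick an intersection point $P$ and compare the two smoothings $(x *_P y)_0$ and $(x *_P y)_\infty$. One must show that after summing over all intersection points there is \emph{no cancellation}, i.e. the resulting element of $\K\conju{\pi}$ is nonzero in the free module on $\conju{\pi}$. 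The standard technique here is a ``length'' or ``complexity'' argument: among all the resulting smoothed curves, identify one (say, of maximal or minimal hyperbolic length, or maximal self-intersection number, or one with an extremal subword in a hyperbolic normal form) that can only arise from a single intersection point and a single sign, so its coefficient in the bracket is $\pm 1 \neq 0$. Showing such an extremal term exists and is not matched is the crux.

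The main obstacle, and the place where the geometric/earthquake viewpoint introduced earlier in the paper should be doing real work, is controlling the cancellation: a priori two different intersection points could yield freely homotopic smoothings with opposite signs. I would handle this by the following steps, in order. \textbf{Step 1:} Reduce to $x$ primitive via the power trick above, noting $\conju{z}^n$ is central iff $\conju{z}$ is, which requires its own short argument (bracketing with $\conju{z}$ and tracking coefficients of the self-smoothings of $z^n$, all of which record powers of subarcs of $z$). \textbf{Step 2:} If $\conju{x}$ is primitive and not simple, take $\conju{y}=\conju{x}$ and analyze the self-intersection smoothings: pick a self-intersection point $P$ realizing an ``innermost'' or extremal configuration on the geodesic, and show the smoothed curve's free homotopy class at $P$ is distinct from that at every other self-intersection and from the complementary sign. \textbf{Step 3:} If $\conju{x}$ is primitive and simple but \emph{not} boundary-parallel (nor trivial), then $x$ is a non-separating simple closed curve or a separating one whose complement has nontrivial topology; in either case there is a simple closed geodesic $y$ meeting $x$ exactly once (non-separating case) or in an essential, minimal-position fashion, and a single-intersection bracket $[\conju{x},\conju{y}]$ is manifestly a single nonzero term $\conju{x y}_0 - \conju{x y}_\infty$, which is nonzero because the two smoothings of a one-point intersection of two simple curves are never freely homotopic (one is a single curve, the other may be a single curve too, but they differ — e.g. by a length comparison, or because they have different self-intersection numbers, $0$ versus a positive number, when $y\cdot x = 1$).

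To make Step 3's non-cancellation rigorous I would lean on the self-intersection count of the smoothings: for transverse $x,y$ in minimal position, $\conju{(x*_P y)}_0$ and $\conju{(x*_P y)}_\infty$ realize the two ``resolutions'' that appear in the skein-theoretic computation of $i(x,y)$, and a standard fact (recalled or reproved via hyperbolic geodesics in the paper's Subsection on intersection points) is that summing $[\conju{x},\conju{y}]$ against an appropriate functional recovers $\pm i(\conju{x},\conju{y}) \neq 0$ when $x,y$ are not homotopically disjoint — this is precisely the ``TWG bracket counts intersection'' result the excerpt promises a short proof of, and it immediately gives $[\conju{x},\conju{y}]\neq 0$. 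So the cleanest route is: \textbf{(a)} prove the (easy) ``central $\Rightarrow$ boundary/trivial'' by contrapositive using the intersection-counting result for non-self-intersecting test curves, and \textbf{(b)} when $x$ itself is non-simple but primitive and there is no nice transverse $y$, fall back to $y = x$ and the extremal-length argument of Step 2. The hardest single point will be ruling out cancellation in this self-bracket case; I expect to resolve it by choosing $x$ to be the geodesic and using that the longest (in hyperbolic length, equivalently longest cyclic word) smoothing among $\{(x*_P x)_0, (x*_P x)_\infty : P\}$ is uniquely attained, since concatenating the two subarcs at the ``outermost'' self-intersection along the geodesic produces a curve strictly longer than any other smoothing, so its coefficient cannot be cancelled.
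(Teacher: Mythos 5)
There is a genuine gap, and it is exactly the difficulty the paper singles out: an element of the center is an arbitrary finite linear combination $\sum_{i} c_i\conju{x_i}$ in $\K\conju{\pi}$, not a single basis class. Your entire converse argument is phrased as ``if $\conju{x}$ is not trivial or peripheral, find $\conju{y}$ with $[\conju{x},\conju{y}]\neq 0$''; this only shows that no single non-peripheral class is central. It does not exclude a central combination $c_1\conju{x_1}+c_2\conju{x_2}+\cdots$ with non-peripheral $x_i$, where for every test curve $y$ the terms of $[\conju{x_1},\conju{y}]$ cancel against terms of $[\conju{x_2},\conju{y}]$, etc. Handling this cross-cancellation is the real content of the theorem. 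The paper's proof does it by bracketing the putative central element against \emph{all powers} $\conju{x}^m$ of an arbitrary simple closed curve $x$: the intersection points and angles of $x^m$ with the $y_i$ do not depend on $m$, so if the bracket vanishes for enough values of $m$, the $\cosh$-length formula (Lemma~\ref{lem:cosh}) forces two angles at intersection points to be congruent or supplementary for \emph{every} metric; earthquaking along the simple curve $x$ strictly decreases all these angles (Lemma~\ref{lem:decreasing}), and a maximal-angle/zigzag argument (Lemma~\ref{lem:smaller angle}, Proposition~\ref{prop:noncapower}) rules out both possibilities. This yields $i(x,y_i)=0$ for every simple $x$ and every $i$, and Lemma~\ref{lem:classical} concludes. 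Your proposal contains no mechanism playing the role of the varying power $m$, which is what converts ``no cancellation within one bracket'' into ``no cancellation across a linear combination.''

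A second, independent problem is your Step 2: for primitive non-simple $x$ you propose the test curve $y=x$, but $[\conju{x},\conju{x}]=0$ identically by skew-symmetry of the bracket, so no analysis of self-intersection smoothings can make this nonzero; and even setting that aside, the claim that the ``longest'' smoothing is uniquely attained at an ``outermost'' self-intersection is unsubstantiated (by Lemma~\ref{lem:cosh} the lengths of smoothings at different points are controlled by the angles there and can coincide, and self-brackets are known to be delicate---cf.\ the paper's discussion of $[\alpha,\bar\alpha]$ and the Chas--Krongold results). The correct move, even for the single-class statement, is the one the paper makes uniformly: since $x$ is neither trivial nor peripheral, there is a \emph{simple} closed curve meeting it essentially (Lemma~\ref{lem:classical}), and the Counting Intersection Theorem (which requires one curve simple---satisfied by the test curve) already gives a nonzero bracket, making your Steps 2 and 3 unnecessary as separate cases; but this still only treats single classes, so the linear-combination gap above remains the decisive one.
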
 

A difficulty that arises  in the study of the center using topological or geometric tools is that  formal linear combinations of classes of curves, (as opposed to single classes of curves) have to be considered.  Thus, the characterization of the center requires more argument than that of the Counting Intersection Theorem below.

Here is the idea of the proof of the Center Theorem: The intersection points of a union of geodesics $y_1, y_2,..., y_k$ and different powers of a simple geodesic $x$ are ``kind of " the same when the powers vary -they are the same ``physical points" and hence the angles are the same. If the bracket of $x^m$ with a linear combination of $y_1, y_2,..., y_k$    is zero for enough values of $m$,  then there are pairs of intersection points of $x$ with the union of $y_1, y_2,..., y_k$ that yield terms of the corresponding brackets that cancel  for different values of $m$.   This implies pairs of angles at intersection points are supplementary or congruent  for all metrics.  By twisting along the simple curve $x$, and some hyperbolic geometry we show that both possibilities lead to a contradiction.

\subsubsection{The Centers of the Universal Enveloping algebra and of the Symmetric Algebra of the TWG Lie Algebra}

In Section~\ref{sec:uea}, we extend  the method described in the above paragraph further to study the universal enveloping algebra $\U(\K\conju{\pi})$ and the symmetric algebra $\s(\K\conju{\pi})$ of the TWG Lie algebra. The universal enveloping algebra $\U(\K\conju{\pi})$ admits a natural Poisson algebra structure where the Poisson bracket is the commutator.   The symmetric algebra $\s(\K\conju{\pi})$ admits natural Poisson algebra structure induced from the TWG-Lie bracket using Leibniz rule, (see Section~\ref{sec:uea} for  definitions and references.) Using our geometric computations  together with  Poincar\'e-Birkhoff-Witt theorem, we compute the Poisson center of the each of the Poisson algebras $\U(\K\conju{\pi})$ and  $\s(\K\conju{\pi})$.

\begin{namedtheorem}[Poisson Center]
The Poisson centers of the Poisson algebras $\U(\K\conju{\pi})$ and  $\s(\K\conju{\pi})$ are generated by scalars $\K$, the free homotopy class of constant curve and the curves homotopic to boundary and punctures.
\end{namedtheorem}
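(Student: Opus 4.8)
The plan is to reduce the computation of the Poisson centers of $\U(\K\conju{\pi})$ and $\s(\K\conju{\pi})$ to the already-established Center Theorem, using the Poincar\'e--Birkhoff--Witt (PBW) theorem to pass between the Lie algebra and its enveloping/symmetric algebras. I will treat $\s(\K\conju{\pi})$ in detail; the argument for $\U(\K\conju{\pi})$ is parallel once one fixes a PBW-type monomial basis. Write $Z$ for the $\K$-span of the constant loop together with all classes of curves winding around a single puncture or boundary component, so that by the Center Theorem $Z$ is precisely the center of the TWG Lie algebra. The "easy" inclusion is that every element of the subalgebra generated by $Z$ (i.e. polynomials in the elements of $Z$) is Poisson-central: since the Poisson bracket on $\s(\K\conju{\pi})$ is the unique biderivation extending the TWG bracket, and each generator in $Z$ brackets trivially with every basis curve, the Leibniz rule propagates this to all monomials in $Z$. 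For $\U(\K\conju{\pi})$ one argues the same way with the commutator, noting that elements of $Z$ are central in the associative sense as well because they Lie-commute with everything.

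The substantive direction is the reverse inclusion. Fix a total order on the basis $\conju{\pi}$ in which the central classes (elements of the natural basis of $Z$) come first, and use the resulting PBW monomial basis of $\s(\K\conju{\pi})$: monomials $z \cdot m$ where $z$ is a monomial in central curves and $m$ is a monomial in non-central curves. Given a Poisson-central element $f$, group its terms by their non-central part $m$ and write $f = \sum_m c_m(z)\, m$ with $c_m$ a polynomial in the central generators. One wants to show every $m$ appearing with a nonzero $c_m$ is trivial, i.e. $f$ lies in $\K[Z]$. The strategy is to bracket $f$ against a single non-central basis curve $\conju{w}$ and extract, via the Leibniz rule, the "leading" contribution: $[f,\conju{w}] = \sum_m c_m(z)\, [m,\conju{w}]$, and $[m,\conju{w}] = \sum_{\text{factors } \conju{u} \text{ of } m} (m/\conju{u})\,[\conju{u},\conju{w}]$. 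The key geometric input — this is exactly the mechanism behind the Center Theorem and its refinement using powers $x^{m}$ of a simple curve — is that by choosing $\conju{w}$ adapted to a longest non-central monomial $m$ and by varying the hyperbolic metric (twisting along suitable simple curves), one can guarantee that the "top-degree in the non-central variables" part of $[f,\conju{w}]$ cannot cancel unless that longest monomial is absent. A standard induction on the non-central degree then forces $c_m = 0$ for all non-constant $m$, whence $f \in \K[Z]$.

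For $\U(\K\conju{\pi})$ the only extra bookkeeping is that the associative product is noncommutative, so one must be careful that the PBW basis is $\{z_1 \cdots z_a\, w_1 \cdots w_b : z_i \in Z\text{-basis},\ w_j \in \text{non-central basis},\ \text{indices non-decreasing}\}$ and that $[\cdot,\conju{w}]$ (commutator) still acts by a derivation taking a length-$b$ monomial in non-central curves to a sum of length-$b$ terms (the correction terms from re-ordering have strictly smaller non-central length, since each TWG bracket of two curves is again a sum of single curves — a length drop of one in the associative filtration, balanced by nothing). So the same "leading term" extraction works: the associated graded of $\U(\K\conju{\pi})$ with respect to the PBW filtration is $\s(\K\conju{\pi})$ with its Poisson bracket, a Poisson-central element of $\U$ has Poisson-central symbol in $\s$, and one lifts back. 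I expect the main obstacle to be the geometric core of the second paragraph: making precise the claim that for a chosen witness curve $\conju{w}$ the top-non-central-degree part of $[f,\conju{w}]$ is nonzero whenever $f \notin \K[Z]$. This requires re-running the angle-supplementary-or-congruent dichotomy from the proof of the Center Theorem, now inside a monomial rather than for a bare linear combination, and checking that the freedom to vary $m$ (the power) and the metric still produces the needed non-cancellation; the bilinear/multilinear extension via Leibniz is what makes this propagate, but one must verify no accidental cancellations occur between different monomials $m$ of the same non-central degree, which is where the careful choice of $\conju{w}$ (detecting one specific curve factor) and of the twisting direction enters.
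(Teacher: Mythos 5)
Your overall frame (PBW basis, the easy inclusion via Leibniz, and reducing $\U(\K\conju{\pi})$ to $\s(\K\conju{\pi})$ through the PBW filtration) is sound, but the substantive step is missing, and the mechanism you propose for it would not work as stated. You bracket a putative central element $f$ against a \emph{single non-central witness curve} $\conju{w}$ chosen ``adapted to'' a longest non-central monomial, and assert that the top-degree part of $[f,\conju{w}]$ cannot cancel; you yourself flag this as the main obstacle and never establish it. The difficulty is real: the whole non-cancellation machinery of the paper is built on bracketing with \emph{all powers} $\conju{x}^n$ of an arbitrary \emph{simple} closed geodesic. Simplicity is needed because the monotone angle change under earthquake (Lemma~\ref{lem:decreasing}) only holds when twisting along a simple curve, and the varying exponent $n$ is needed because Proposition~\ref{prop:values of m} only forces the angle relations ($\phi_P=\phi_Q$ or $\phi_P+\phi_Q=\pi$) when a coincidence of bracket terms occurs for \emph{two distinct} values of $n$ (the $\coth(\tfrac12\, m\ell_x)$ dependence is what separates the equations). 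With a single fixed witness $\conju{w}$, which moreover need not be simple, a single accidental cancellation between terms of the same non-central degree cannot be excluded by any lemma in this circle of ideas; indeed Example~\ref{ex:non-simple} shows the TWG bracket with a non-simple curve genuinely admits cancellation, so ``choosing $\conju{w}$ to detect one specific factor'' has no leverage.

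There is a second, unaddressed failure mode in your leading-term extraction: after applying Leibniz, a bracket term such as $(\conju{x}^n*_P\conju{x}_{i_j})_0$ could coincide with one of the \emph{original curve factors} appearing in another monomial of $f$, which would let distinct PBW monomials recombine and cancel without any pairing of $0$-terms against $\infty$-terms. The paper's proof of Theorem~\ref{thm:univ} rules this out with its third alternative, using the length formula of Lemma~\ref{lem:cosh}: since $\ell_{x^n}=n\ell_x$ grows, $(\conju{x}^n*_P\conju{x}_{i_j})_0$ cannot equal a fixed class $\conju{x}_{k_l}$ for infinitely many $n$. With your single witness $\conju{w}$ there is no parameter to vary, so this case is not excludable. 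The repair is essentially to follow the paper: expand $Z$ in the PBW basis (Theorem~\ref{thm:pbw}), impose $[\conju{x}^n,Z]=0$ for all $n$ and all simple $x$, pick for each factor $\conju{x}_{i_j}$ an intersection point $P$ of maximal angle $\phi_P$, and use Proposition~\ref{prop:noncapower} (maximal angle plus infinitely many $n$) together with the Lemma~\ref{lem:cosh} length argument to conclude every factor $\conju{x}_{i_j}$ is disjoint from every simple closed geodesic, hence central by Lemma~\ref{lem:classical}; no induction on non-central degree or special ordering of the basis is then needed. Your associated-graded reduction of the $\U$ case to the $\s$ case is a legitimate variant (the paper instead runs the identical argument in both algebras), but it does not rescue the missing geometric core.
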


Let $\Sigma$ be a closed surface. Consider $\M_{\C}^n$, the algebraic variety associated to the moduli space of representations of $\pi_1(\Sigma)$ into $GL(n, \C)$ up to conjugation, which on its smooth part admits the Goldman symplectic structure (see \cite{goldman1984symplectic}, \cite{atiyah1983yang}).

In \cite{goldman_invariant_1986}, Goldman defined the homomorphism of Poisson algebras $\s(\C{\pi})\rightarrow \C[\M_{\C}^n]$ defined by $\Phi_n({x})=tr_x$  (here, $\C{\pi}$ denotes the module spanned by the set $\pi$ of conjugacy classes of  fundamental group of surface - that is, the set of free homotopy classes of \textit{directed} curves- and $\s(\C{\pi})$ denotes the symmetric algebra of $\C{\pi}$). The map $\Phi_n$  is surjective \cite{etingof2006casimirs} but not injective in general. However Etingof \cite[Proposition 2.2]{etingof2006casimirs} proved that given any finite dimensional subspace $V$ of $\C\pi$, there exists $N$ such that $\Phi_n|_V$  is injective for all $n\geq N$. Using this result together with the fact that Poisson center of $\C[\M_{\C}^n]$ being $\C$, Etingof computed the center of $\C\pi$. In principle Etingof's method could be used to compute the center of $\C\conju{\pi}$ by replacing $GL_n(\C)$ in the definition of $\M_{\C}^n$ by one of the following groups: $O_n(\C), Sp_n(\C)$ or $U_n(\C)$   \cite[Theorem 3.14 and Theorem 5.13]{goldman_invariant_1986}. However, as with the Etingof's proof, this possible method will work only for closed surfaces. Our proof, on the other hand, works for any complete hyperbolic surface.

 Chas and Gadgil \cite{chas2016extended} used geometric group theory to study the quasi-geodesic nature of the lifts of the terms of the Goldman bracket. Using this result and the facts that lifts of a simple closed curve are disjoint, in \cite{kabiraj2016center}, the second author computed the center of $\K\pi$. Although,  this method could be used to compute the center of $\K\conju{\pi}$, there are two drawbacks. Firstly because the proof is based on case by case considerations, it is long, technical and geometrically less transparent than our proof here. Secondly because of the geometric group theory techniques, the various bounds obtained would be qualitative and not quantitative. 
 
\subsubsection{Canonical decomposition of the Goldman Lie algebra and the TWG Lie algebra}

Denote by $\text{C}$  (resp. $\widetilde{\text{C}}$) the free homotopy classes of the directed (resp. undirected) trivial curve, and the curves or powers of curves parallel to boundary components. (Note that this    $\text{C}$  and. $\widetilde{\text{C}}$ are a basis of the center of the Goldman and TWG Lie algebras respectively). Denote by 
$\pi_0$ (resp. $\conju{\pi}_0$)  the set of free homotopy classes of  directed (resp. undirected) closed curves minus  $\text{C}$  (resp. $\widetilde{\text{C}}$). 

Goldman \cite{goldman_invariant_1986} stated that  for closed surfaces,
the Goldman Lie algebra admits a canonical decomposition $\mathbb{Z} \text{O} \oplus\mathbb{Z}{\pi}_0$, where $\text{O}$ represents the class of constant curve. The result is valid, but the proof has a gap, namely, it is not true that if $\alpha$ is the class of a directed curve, and $\bar{\alpha}$ is the class of  $\alpha$ with opposite direction, then the Goldman bracket of $\alpha$ and $\bar{\alpha}$, $[\alpha,\bar{\alpha}]$ is zero. For instance, if $\alpha$ is a figure eight curve in the pair of pants (that goes around  two boundary components) then  $[\alpha,\bar{\alpha}]$  has two terms that do not cancel.  On the contrary, the first author has conjectured in  \cite{chas2004combinatorial}  that for all directed curves $\beta$ the number of terms of  $[\beta,\bar{\beta}]$  (counted with multiplicity) is twice the self-intersection number of $\beta$. This conjecture is a ``pre-theorem'' in the sense that it is strongly supported by computer evidence.

Our techniques yield  a proof of 
Goldman's direct sum statement about the Lie algebra of directed curves, and also, the analogous result for the TWG bracket. 

\begin{namedtheorem}[Canonical Decompositions] The Goldman bracket of two non-central directed curve classes, expressed in the natural basis,  does not contain a central element as a term. The analogous result holds for the TWG bracket. \\
(1) The Goldman Lie algebra admits a Lie algebra decomposition $\K\mathrm{C}\oplus\K \pi_0$.\\
(2) The TWG Lie algebra also admits a Lie algebra decomposition 
$\K\widetilde{\mathrm{C}}\oplus \K \conju{\pi}_0$.
\end{namedtheorem}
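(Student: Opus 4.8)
The plan is to deduce the two direct‑sum statements from the single assertion that \emph{no term of the TWG (resp.\ Goldman) bracket of two non‑central curves is central}, and then to prove that assertion by lifting one intersection point to the hyperbolic plane and doing a short computation in $\mathrm{PSL}(2,\R)$. First I would record the reduction. By the geometric definition, $\K\widetilde{\mathrm{C}}$ (resp.\ $\K\mathrm{C}$) is a central, hence abelian, ideal — the trivial loop and powers of boundary or puncture loops have representatives disjoint from every curve — and by the Center Theorem (and, in the directed case, by the results of Etingof and of the second author recalled above) it is the full center; moreover the complementary summand $\K\conju{\pi}_0$ (resp.\ $\K\pi_0$) is spanned by the remaining basis curves. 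Thus $\K\widetilde{\mathrm{C}}\oplus\K\conju{\pi}_0$ is a \emph{Lie algebra} decomposition as soon as $\K\conju{\pi}_0$ is a subalgebra, and by bilinearity this holds as soon as, for basis curves $\conju{x},\conju{y}\in\conju{\pi}_0$, each smoothing $\conju{(x*_P y)}_0$ and $\conju{(x*_P y)}_\infty$ of non‑central $x,y$ in transversal position is non‑central, for every intersection point $P$. The identical reduction applies to the Goldman bracket. So the whole theorem reduces to: if $x,y$ are non‑central closed geodesics on a complete hyperbolic surface $\Sigma$ meeting transversally at $P$, then every curve obtained by a loop product of $x$ and $y$ at $P$ (with either orientation convention) is non‑trivial and non‑peripheral.

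Write $\Sigma=\H/\Gamma$ with $\Gamma\le\mathrm{PSL}(2,\R)$ discrete and torsion‑free, and lift $P$ to $\tilde P\in\H$. The lifts of $x$ and $y$ through $\tilde P$ are the axes of hyperbolic elements $g,h\in\Gamma$ representing the classes of $x$ and $y$ — hyperbolic because a non‑central class is neither trivial nor a power of a puncture loop, hence not parabolic — and these axes are linked on $\partial\H$ because $x$ and $y$ cross transversally at $P$. Unwinding the loop product, the curve it produces is the conjugacy class of one of $g^{\pm1}h^{\pm1}$. So it suffices to prove the \textbf{Key Lemma}: \emph{if $g,h\in\mathrm{PSL}(2,\R)$ are hyperbolic with linked axes, then $gh$ is hyperbolic and its axis is linked with the axis of $g$}. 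Granting this: $gh$ hyperbolic prevents $[gh]$ from being trivial or parabolic, hence from being the trivial loop or a power of a loop around a puncture; and the axis of $gh$ being linked with the axis of $g$ forces a lift of the geodesic of $[gh]$ to meet a lift of $x$, so $i([gh],x)>0$, which prevents $[gh]$ from being a power of a boundary component (the geodesic of such a power is a boundary geodesic, disjoint from the geodesic of the non‑peripheral curve $x$). Hence $[gh]$ is non‑central. All four sign patterns are covered at once, since $g^{-1}$ has the same, still linked, axis as $g$, and $hg$ is conjugate to $gh$.

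To prove the Key Lemma, conjugate in $\mathrm{PSL}(2,\R)$ so that $\tilde P=i$, the axis of $g$ is the imaginary axis — so $g=\mathrm{diag}(e^{a},e^{-a})$ with $a\ne 0$ — and the axis of $h$ is the image of the imaginary axis under the rotation $R$ by the crossing angle $\phi\in(0,\pi)$ about $i$ — so $h=R\,\mathrm{diag}(e^{b},e^{-b})\,R^{-1}$ with $b\ne 0$. A direct matrix multiplication gives
\[
\mathrm{tr}(gh)=2\cosh(a+b)\cos^{2}(\phi/2)+2\cosh(a-b)\sin^{2}(\phi/2),
\]
which exceeds $2$ because $\cosh(a+b)>1$ and $\cos^{2}(\phi/2)>0$ (the latter since $\phi\ne\pi$); hence $gh$ is hyperbolic. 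The same computation shows that the upper‑right and lower‑left entries of $gh$ have ratio $e^{2a}>0$, so the two real fixed points of $gh$, whose product is the negative of that ratio, have opposite signs; since the fixed points of $g$ are $0$ and $\infty$, the axis of $gh$ separates $0$ from $\infty$ on $\partial\H=\R\cup\{\infty\}$, i.e.\ it is linked with the axis of $g$. This proves the Lemma and, with the reduction above, the theorem.

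The step I expect to require the most care is the passage to $\H$: verifying rigorously that each term of the bracket is the conjugacy class of a product $g^{\pm1}h^{\pm1}$ of hyperbolic elements with \emph{linked} axes — matching the combinatorial smoothing with multiplication in $\Gamma$, and, when $x$ or $y$ is a proper power or when $x$ and $y$ share a geodesic, first perturbing the geodesic representatives into transversal general position — together with the bookkeeping that the two kinds of peripheral class, around a puncture and around a boundary component, are excluded respectively by the hyperbolicity of $gh$ and by the linking of its axis with that of $x$. Everything else is either the elementary $\mathrm{PSL}(2,\R)$ computation above or the formal reduction from terms to the Lie algebra splitting.
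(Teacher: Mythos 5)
Your proposal is correct, and its engine is the same as the paper's: the decisive fact is that the product of two hyperbolic isometries of $\H$ with crossing axes is again hyperbolic, which is exactly Theorem~\ref{theo:beardon} (Beardon), here re-derived through your explicit trace computation in $\mathrm{PSL}(2,\R)$; your formal reduction from ``no central terms'' to the two direct-sum statements is also fine and merely makes explicit what the paper leaves implicit. Where you genuinely diverge is in how the peripheral classes are excluded: the paper fixes a complete metric in which every boundary component is a puncture, so that central classes are represented by the identity or by parabolic elements and hyperbolicity of the smoothing alone finishes the argument in two lines, whereas you keep a metric with (possibly) geodesic boundary and add the linked-axes half of your Key Lemma, using that a boundary (funnel-core) geodesic is disjoint from every other closed geodesic; this is sound, at the price of the $\mathrm{SL}(2,\R)$ bookkeeping the paper's metric trick avoids. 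Two small patches are needed: the justification ``$\cosh(a+b)>1$'' fails when $b=-a$, but the trace is still $>2$ because then $\cosh(a-b)>1$ and $\sin^2(\phi/2)>0$, so only the wording must change; and the step ``the axes of $gh$ and $g$ are linked, hence $i([gh],\conju{x})>0$'' overlooks the possibility that the geodesic of $[gh]$ coincides with the geodesic $x$ itself, in which case $[gh]$ is a power of the non-central class $\conju{x}$ and is therefore non-central anyway, so the conclusion stands.
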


\subsubsection{A new proof of the Counting Intersection Theorem}

%In \cite{goldman_invariant_1986}, Goldman introduced Goldman Lie algebra (see Section \ref{sec:Goldman}) and the TWG Lie algebra. Goldman   proved that if either the Goldman or the  TWG-Lie bracket of two classes of curves is zero, and one of them is simple, then the two classes have disjoint representatives, that is, their geometric intersection number is zero. Goldman's proof  for the unoriented case (suggested by Wolpert) uses convexity of geodesic length functions along earthquake paths. Goldman asked for a topological proof of this (topological) result. 
 
% In \cite{chas2010minimal}, Chas gave  a   combinatorial group theory proof of this result and of the following generalization : The number of terms (counted with multiplicity) of the Goldman bracket (respectively TWG-bracket) of two classes of directed (respectively undirected) curves, when one of them simple,  counts  the geometric intersection number of the two classes.
%In both cases, the main tool was the use of  free products with amalgamation and HNN structures on the fundamental group of the surface, determined by simple curves. In the case of the TWG- bracket, Chas's proof was complicated requiring  several  combinatorial technical lemmas.  

 Given two free homotopy classes of   closed curves $\conju{x}$ and $\conju{y}$, the \emph{geometric intersection number of $\conju{x}$ and $\conju{y}$}, denoted by $i(\conju{x},\conju{y})$, is defined to be the smallest number of crossings of a pair of representatives $\conju{x}$ and $\conju{y}$, that intersect in transversal double points.  Goldman \cite[Theorem 5.7]{goldman_invariant_1986}  proved that if the TWG bracket of two  classes of undirected curves is zero, and one of them has a simple representative, then the classes contain disjoint representatives. He also prove the analogous result for directed curves.  Chas  \cite{chas2010minimal} using combinatorial group theory generalized this result by proving that the Goldman and the TWG bracket of two curves, one of them simple, counts  intersection number.  In both cases, the main tool was the use of  free products with amalgamation and HNN structures on the fundamental group of the surface, determined by simple curves. In the case of the TWG- bracket, Chas's proof was complicated requiring  several cases and combinatorial technical lemmas. Our techniques allow us to give a new geometric proof that the TWG bracket counts intersections 

\begin{namedtheorem}[Counting Intersection] If $\a$ and $\b$ are closed curves and $\a$ is simple then   the number of terms (counted with multiplicity) of $[\conju{\a},\conju{\b}]$ is twice the intersection number,  $2i(\conju{\a},\conju{\b})$.  
\end{namedtheorem}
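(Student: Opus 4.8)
The plan is to work with geodesic representatives on the hyperbolic surface $\Sigma$ and to show two things: first, that $[\conju{\a},\conju{\b}]$ has exactly $2\,i(\conju{\a},\conju{\b})$ terms when counted with multiplicity; and second, that no cancellation occurs among these terms, so the count survives the passage to the free module $\K\conju{\pi}$. Lift everything to the universal cover $\H$. Since $\a$ is simple, the lifts of $\a$ form a disjoint family of complete geodesics in $\H$; fix geodesic representatives so that $\a$ and $\b$ meet in exactly $i(\conju{\a},\conju{\b})$ transversal double points (minimal position is realized by geodesics). At each such intersection point $P$ we obtain the two smoothings ${(x*_P y)}_0$ and ${(x*_P y)}_\infty$, contributing two terms to the sum. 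So the count \emph{before} combining like terms in the basis $\conju{\pi}$ is immediately $2\,i(\conju{\a},\conju{\b})$; the content of the theorem is that this number does not drop.

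The key step, therefore, is a \emph{no-cancellation} argument. Consider the $2\,i(\conju{\a},\conju{\b})$ curves $\{(x*_P y)_0, (x*_P y)_\infty : P \in x\cap y\}$ as objects on $\Sigma$ and ask when two of them can be freely homotopic, and, when they are, whether their signs conspire to cancel. I would analyze this by lifting to $\H$: choose a lift $\widetilde{\a}$ of $\a$; its stabilizer is an infinite cyclic group $\langle A\rangle \le \pi_1(\Sigma)$, and because $\a$ is simple, $\langle A\rangle$ together with the lifts of $\b$ crossing $\widetilde{\a}$ organizes the combinatorics in a controlled way (this is the hyperbolic-geometry incarnation of the amalgamated/HNN structure used in \cite{chas2010minimal}, but cleaner). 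For each crossing lift one reads off the conjugacy class of the smoothed curve as an explicit word; I would show that the geodesic representative of $(x*_P y)_0$ (resp.\ $(x*_P y)_\infty$) ``remembers'' the crossing, e.g.\ by exhibiting a distinguished lift that crosses $\widetilde{\a}$ in a way determined by $P$, so distinct crossings cannot give homotopic smoothings unless forced by an obvious symmetry—and in the latter case the two copies appear with the \emph{same} sign rather than opposite signs, hence add instead of cancelling. A companion point to check is that a smoothing is never null-homotopic (it always crosses a lift of $\a$) and that a $0$-smoothing is never homotopic to an $\infty$-smoothing at the same or another point with opposite sign. This is essentially the geometric picture already invoked for the Center and Canonical Decomposition theorems, specialized to a simple $\a$, where the twist-flow argument along $\a$ forces the relevant geodesic lengths to differ.

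The main obstacle I expect is precisely this injectivity/sign bookkeeping: translating ``the smoothed curves at distinct crossings are geometrically distinct'' into a statement robust enough to rule out \emph{all} accidental homotopies, including those between a $0$-smoothing at one point and an $\infty$-smoothing at another. The clean way to do this is to run an earthquake (Fenchel--Nielsen twist) along the simple geodesic $\a$ and track how the length of each smoothed geodesic varies with the twist parameter $t$: each smoothed curve that genuinely crosses $\a$ has length growing linearly in $|t|$ with a slope equal to (a signed count of) its crossings with $\a$, so two smoothings with different crossing data have different length functions and cannot be homotopic; the finitely many pairs with identical crossing data are then matched up by hand and seen to carry equal signs. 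Once no-cancellation is established, the theorem follows: the number of terms of $[\conju{\a},\conju{\b}]$ in the basis $\conju{\pi}$, counted with multiplicity, equals the number of summands in the defining sum, which is $2\,i(\conju{\a},\conju{\b})$.
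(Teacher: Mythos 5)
Your setup is right: with geodesic representatives the defining sum has exactly $2\,i(\conju{\a},\conju{\b})$ summands, coincidences among $0$-terms (or among $\infty$-terms) only increase multiplicity and are harmless, and the whole content is to exclude $\conju{(x*_P y)}_0=\conju{(x*_Q y)}_\infty$ for (possibly equal) intersection points $P,Q$. But precisely at that step your argument has a genuine gap. Your proposed mechanism --- track the length of each smoothed geodesic along the earthquake $\mathcal{E}_{\a}(t)$ and distinguish curves by the growth rate of their length functions --- cannot separate a $0$-smoothing from an $\infty$-smoothing that happen to have the same intersection number with $\a$, and there is no reason they should differ (moreover the asymptotic slope is the unsigned geometric intersection number with $\a$, and the instantaneous derivative is Wolpert's sum of cosines, not a ``signed count of crossings''; homotopic curves trivially have identical length functions, so the comparison only bites if you first show the crossing data must differ, which you do not). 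Your fallback, ``the finitely many pairs with identical crossing data are matched up by hand and seen to carry equal signs,'' is exactly backwards for the dangerous pairs: by definition every $0$-term enters with $+$ and every $\infty$-term with $-$, so if such a pair were homotopic it would cancel; asserting equal signs here is assuming what must be proved.

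What closes this case in the paper is a quantitative link between the class of a smoothing and the angle at the intersection point, not just crossing data. From Theorem~\ref{theo:beardon} one gets Lemma~\ref{lem:cosh}: for every $Y\in\T$, $\cosh(\ell_{(x*_P y)_0}/2)$ and $\cosh(\ell_{(x*_P y)_\infty}/2)$ equal $\cosh(\ell_x/2)\cosh(\ell_y/2)\mp\sinh(\ell_x/2)\sinh(\ell_y/2)\cos\phi_P(Y)$. Hence if $\conju{(x*_P y)}_0=\conju{(x*_Q y)}_\infty$, the equality of length functions over all of $\T$ forces $\phi_P(Y)+\phi_Q(Y)=\pi$ for every $Y\in\T$; then Lemma~\ref{lem:decreasing} (Kerckhoff, extended to non-simple $y$) says that twisting along the simple geodesic $\a$ makes both $\phi_P$ and $\phi_Q$ strictly decrease, contradicting that their sum is constant. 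That is Lemma~\ref{lem:zero-infinty}, and the theorem follows. So your outline identifies the right obstruction and even the right tool (earthquakes along $\a$), but you apply the earthquake to lengths, where it cannot give a contradiction; you need the length--angle identity of Lemma~\ref{lem:cosh} (or some equivalent) to convert the hypothetical cancellation into the supplementary-angle relation before the monotonicity argument can be used. The word-theoretic/lift bookkeeping in your second paragraph is not needed for the count with multiplicity and can be dropped.
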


One noteworthy point about our geometric definition of the TWG Lie bracket here and the resulting angle technique is a new  %{\color{red}{generalized}} 
proof in two straightforward statements proving no cancellation of terms in the TWG bracket: The first of these statements is that  cancellation means that a certain pair of angles are supplementary; the second, is that earthquaking along the simple curve changes both angles strictly monotonically (both decreasing or both increasing). This is a contradiction, proving the result. 

%{\color{red}{we can omit this paragraph}} Here is  the road map of the proof. If there is cancellation in  the bracket of a simple curve $x$ and another curve $y$, it must occur between two terms  with opposite signs. Thus one of these cancelling terms is  as in Figure~\ref{fig:term}, middle and the other as in Figure~\ref{fig:term}, right. Endow the surface with a hyperbolic metric. Since the cancelling free homotopy classes coincide, the Law of Cosines implies that the angles at each of the corresponding intersection points add up to $\pi$ . (The angle of an ordered pair of two geodesics $(x,y)$ at an intersection point $P$ is the angle from $y$ to $x$ following the orientation of the surface).   But this identity cannot occur for all metrics because by left-twisting the angles between two curves, one of them simple, decrease. 

\subsubsection{A computational result and conjectural  characterization of disjointness of closed curves via the TWG bracket}

The Counting Intersection Theorem  is only valid if  of the curves is  simple, see \ref{ex:non-simple}. However, computer experiments suggest that the TWG bracket actually detects disjointness in all cases.  Thus we conjecture:

\begin{conj*} If the TWG Lie bracket of two distinct classes of undirected curves is zero, then the classes have disjoint representatives.
\end{conj*}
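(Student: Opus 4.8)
The plan is to argue by contradiction, upgrading the angle technique behind the Counting Intersection Theorem so that it tolerates crossings which are self-intersections. Suppose $\conju{x}$ and $\conju{y}$ are distinct undirected classes with $[\conju{x},\conju{y}]=0$ while $i(\conju{x},\conju{y})=n>0$; distinctness is exactly what guarantees that $n$ genuine transversal double points occur. Fix a complete hyperbolic metric on $\Sigma$, take the geodesic representatives $x$ and $y$, and perturb the metric so that all crossings within $x\cup y$ are distinct transversal double points. Since $\conju{\pi}$ is a basis of $\K\conju{\pi}$, vanishing of $[\conju{x},\conju{y}]=\sum_{P\in x\cap y}\big(\conju{(x*_{P}y)}_0-\conju{(x*_{P}y)}_\infty\big)$ forces a bijective matching of the $0$-smoothings with the $\infty$-smoothings under which matched curves are freely homotopic. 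A preliminary reduction: replace $\Sigma$ by the essential subsurface $S$ filled by $x\cup y$, taken minimal and with geodesic boundary; every smoothing lies in $S$ and the induced map on free homotopy classes is injective, so the matching, hence $[\conju{x},\conju{y}]=0$, is already detected inside $S$, and we may assume $x\cup y$ fills $\Sigma$.

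The core should be a cancellation--angle dictionary extending the one used for the Counting Intersection Theorem. As there, I would pull each smoothed curve tight to its geodesic and show that a matched pair $\conju{(x*_{P}y)}_0\simeq\conju{(x*_{Q}y)}_\infty$ forces a rigid relation between the crossing angles at $P$ and at $Q$: these two angles must be congruent or supplementary. The decisive observation is that the matching is a purely topological datum, independent of the hyperbolic metric, so the resulting angle identities must hold \emph{simultaneously for every} complete hyperbolic structure on $\Sigma$. The genuinely new point over the simple case is that $P$ or $Q$ may be a self-crossing of $x$ (or of $y$), so the smoothing is obtained by cutting and reconnecting a non-simple curve; analysing its geodesic representative is more delicate, but I expect it to reduce to careful bookkeeping in $\H$, tracking the endpoints on $\partial\H$ of the lifts of the two spliced arcs.

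The last, and I believe decisive, step is to contradict the persistence of these angle identities under deformations of the hyperbolic structure. When $x$ is simple this is exactly the mechanism of the Counting Intersection Theorem: earthquaking along $x$, and along $x^{m}$ for varying $m$ as in the Center Theorem, moves the relevant angles strictly monotonically, so it cannot preserve congruence or supplementarity. For non-simple $x$ no single twist curve is available, and this is where I expect the main obstacle. One route is to twist along an auxiliary simple closed geodesic $c$ meeting $x\cup y$ efficiently --- or, more systematically, along each curve of a pants decomposition adapted to $x\cup y$ --- and to show that the angles entering a matched pair cannot all respond to these earthquakes consistently with the identity; the difficulty is that the angle at a self-crossing of $x$ is an angle between two \emph{distinct} lifts of $x$, so its derivative under an earthquake need not be monotone and the responses at $P$ and at $Q$ may be correlated in a way that preserves the identity. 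A second route is to pass to a finite cover $p\colon\widehat{\Sigma}\to\Sigma$ in which $x$ lifts to a simple closed curve (Scott), where one has $p_{*}[\,p^{!}\conju{x},p^{!}\conju{y}\,]=(\deg p)\,[\conju{x},\conju{y}]$; but $[\conju{x},\conju{y}]=0$ only places $[\,p^{!}\conju{x},p^{!}\conju{y}\,]$ in the kernel of $p_{*}$, not at zero, so here the obstacle is to control the cancellations the covering map introduces, presumably by running the angle technique upstairs where $x$ is now simple.

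Since Example~\ref{ex:non-simple} already shows that no purely local or combinatorial count can decide disjointness once neither curve is simple, any successful argument must use the rigidity of hyperbolic structures in an essential way; overcoming the obstacle just described --- either the monotonicity analysis of angles at self-crossings, or the control of covering-induced cancellations --- is, in my view, the real content of the conjecture.
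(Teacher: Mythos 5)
You should first note that the paper itself does not prove this statement: it is stated as a conjecture, supported only by the Computational Theorem of Section~\ref{sec:concrete} (exhaustive verification up to bounded word length on four low-complexity surfaces). So there is no proof in the paper to compare against, and your text, as you yourself acknowledge, is a research outline whose decisive steps are missing rather than a proof.

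Concerning the content of the outline, the part you expect to be ``delicate'' is in fact the easy part, and the part you defer is where the statement genuinely lives. The cancellation--angle dictionary extends with no extra work: Lemma~\ref{lem:cosh} requires no simplicity, so $\conju{(x*_P y)}_0=\conju{(x*_Q y)}_\infty$ forces $\phi_P(Y)+\phi_Q(Y)=\pi$ for every $Y\in\T$; moreover $P$ and $Q$ are always $x$--$y$ crossings, since self-intersections of $x$ or of $y$ contribute no terms to $[\conju{x},\conju{y}]$, so the ``bookkeeping at self-crossings of the spliced curve'' is not the issue. The fatal problem is with your final step, ``contradict the persistence of these angle identities under deformations'': this cannot work pairwise, because Example~\ref{ex:non-simple} exhibits exactly such a persistent identity --- the bracket $[\conju{aab},\conju{aB}]$ in the pair of pants has a genuine cancellation, so for those (non-simple) geodesics there exist intersection points $P,Q$ with $\phi_P(Y)+\phi_Q(Y)=\pi$ for \emph{all} hyperbolic structures $Y$, and no earthquake or deformation argument can exclude a single supplementary pair. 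Hence any proof must use the global constraint that \emph{all} $2i(\conju{x},\conju{y})$ terms cancel simultaneously, together with the hypothesis that $\conju{x}\neq\conju{y}$, and your outline contains no mechanism for that. The two escape routes you mention are precisely the unresolved points: Lemma~\ref{lem:decreasing} gives monotonicity only for the angle with a \emph{simple} twisting curve, and twisting along auxiliary simple curves or pants curves does not move the angles at $P$ and $Q$ monotonically or independently; and in a Scott cover making $x$ simple you only know $[\,p^{!}\conju{x},p^{!}\conju{y}\,]\in\ker p_*$, so the Counting Intersection Theorem upstairs gives no contradiction downstairs. In short: the proposal correctly reproduces the paper's angle technique where it applies, but the step that would prove the conjecture is exactly the gap you flag, and the statement remains open.
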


The above statement was  verified computationally for as many classes of curves available computers can handle (see Section~\ref{sec:concrete} for precise statements).
This conjecture is unexpected, given that the equivalent statement does not hold  for the Goldman Lie bracket on directed curves. In other words, there are examples of pairs of classes of directed curves with Goldman bracket zero and without disjoint representatives, see for instance  Example~\ref{ex:non-simple directed}.

There are also examples that show  the TWG Lie bracket of two non-simple curves can have cancellation, see  Example \ref{ex:non-simple}. Thus, the hypothesis of one of the curves being simple in  the Counting Intersection Theorem cannot be dropped in general.

\begin{center}
\begin{tabular}{ |c|c|c|c|c| } 
 \hline
 Bracket &$[x,y]=0$, $x$ simple   
 & $x$ simple then  $[x,y]$
 &$[x,y]=0$ implies & $[x,y]$ counts \\
 & implies disjointness 
  & counts intersection 
 &  disjointness &  intersection \\
 \hline
 TWG & Yes &Yes & Yes, if conjecture holds & No \\ 
 Goldman   &Yes &Yes &No & No \\ 
 
 \hline
\end{tabular}
\end{center}

\subsubsection{Direct proof of the Jacobi identity for the TWG bracket}
In Section~\ref{sec:Goldman} we prove that our definition and that of Goldman \cite{goldman1984symplectic} coincide. 
For completeness, in Section~\ref{sec:jacobi}, we prove that the TWG bracket satisfies the Jacobi identity.

\subsection{Applications}

In \cite{turaev1991skein}, Turaev introduced various skein modules associated to the set of isotopy classes of links in the  three manifold $\Sigma\times I$ for the quantization of $\U(\K\conju{\pi})$ and  $\s(\K\conju{\pi})$.
In \cite{hoste1990homotopy}, Hoste and Przytycki  independently gave another quantization of the same in terms of \emph{homotopy skein modules}. 
Our results about the center of $\U(\K\conju{\pi})$ and  $\s(\K\conju{\pi})$  can be used to  compute the center of these skein modules. See \cite{kabiraj2020poisson}.

\subsection{Other results relating the intersection structure of curves with the Goldman and TWG bracket }

In \cite{turaev1991skein}, Turaev discovered cobracket operations which gives Goldman algebra and TWG Lie algebra the structure of a Lie bialgebra.  Trying to understand the relation of the Goldman-Turaev Lie bialgebra with intersection and self-intersection of curves on surfaces Chas and Sullivan discovered String Topology \cite{chas1999string}, a structure that generalizes the Goldman Lie algebra and the Turaev Lie coalgebra to arbitrary orientable manifolds of all dimensions.

Chas and Krongold \cite{chas2010algebraic} proved that, on a surface with boundary,  a non-power directed curve $x$  is simple   if and only if the Goldman Lie bracket of $x$ with $x^m$ is non-zero, for  any  $m \ge 3$. Moreover, for any directed curve $x$, the number of terms, counted with multiplicity, of the Goldman  Lie bracket of $x$ with $x^m$ is  $2m$-times the geometric self-intersection number of $x$.  (Computer evidence suggests these statements are also true when  $m=2$. We are working on a proof of this result.) 

Chas and Gadgil \cite{chas2016extended} proved that  if $x$ and $y$ are non-power directed  curves, then for $p$ and $q$ large enough, the Goldman Lie  bracket $[{x^p},{y^q}]$ counts the geometric intersection number of the classes of $x$ and $y$. 

Cahn and Tchernov \cite{cahn2013intersections}   determined that the   Andersen-Mattes-Reshetikhin Poisson bracket (a generalization of the Goldman Lie  bracket) counts intersections of two classes of curves, when the classes are distinct.

Kawazumi and Kuno \cite{kawazumi2012center} proved that the center of the Goldman Lie algebra on directed curves on a surface with infinite genus and one boundary component is spanned by constant loop and powers of loops parallel to the boundary component. 

Recently in \cite{turaev2019loops}, Turaev gave a new description of Goldman Lie algebra using \emph{star-fillings} (also see \cite{turaev2019topological}).

In both the Turaev cobracket \cite{turaev1991skein} and the Chas-Sullivan \cite{chas1999string} efforts the motivation to understand embedded curves was related to the Jaco-Stallings \cite{stallings1966not} sixties equivalence between a statement about embedded curves and the Poincar\'e conjecture. Since this statement about embedded curves a five hundred pages treatrise by Morgan and Tian \cite{morgan2007ricci} of Perelman's work it is even more compelling to give a geometric proof (hopefully, in substantially less than five hundred pages) in the language of curves and surfaces. The papers mentioned show this is a vast subject.

\subsection{Organization of the paper}
 This work is organized as follows: In Section~\ref{sec:prelim} we recall some results of hyperbolic geometry and give the description of  a lift of the terms of TWG Lie bracket to the upper half plane $\H$. We also prove the Counting intersection Theorem in Section~\ref{sec:prelim}. In Section~\ref{sec:center thm} we prove the Center Theorem. In Section~\ref{sec:uea} we compute the Poisson center of the universal enveloping algebra and symmetric algebra of TWG Lie algebra. In Section~\ref{sec:concrete} we give explicit examples of computations of Goldman Lie bracket an TWG Lie bracket of curves in some low complexity hyperbolic surfaces and state a conjecture about TWG Lie bracket. In Section~\ref{sec:Goldman}, we prove that our definition coincides with that of Goldman \cite{goldman_invariant_1986}. In particular, this implies that the TWG algebra is well defined on homotopy classes of undirected curves and it is indeed a Lie algebra. For a proof of Jacobi's identity for TWG bracket see Section \ref{sec:jacobi}. In Section~\ref{sec:candec} we prove the Canonical Decomposition Theorem.  
\section*{Acknowledgements}
This work benefited from communications with Vladimir Turaev, Scott Wolpert and William Goldman. 
The first author was partially supported by the NSF. The second author was supported by the DST, India: INSPIRE Faculty fellowship. 

\section{Intersection points, angles and earthquakes. Proof of the Counting Intersection Theorem}\label{sec:prelim}

Denote by $\T$ the Teichm{\"u}ller space associated with the surface $\Sigma$. A closed curve on $\Sigma$ is an \textit{$X$-geodesic} if it is a geodesic for the metric $X \in \T$.

\subsection{Intersection points and metrics}\label{subs:intersection points}

In order to be able to follow intersection points of two curves through homotopies of these curves, we need to refine the definition of intersection point: If $x$ and $y$ are two closed curves intersecting transversally, an \textit{$(x,y)$-intersection point} is a point $P$ on the intersection of $x$ and $y$, together with a choice of a pair of small arcs, one of $x$ and the other of $y$, intersecting only at $P$. 

\begin{remark} For any two curves $x$, $y$ (possibly with intersection points of multiplicity larger than two)  we have  $$i(\conju{x},\conju{y})=\mathrm{min}\#\{\xy : x\in \conju{x}, y\in\conju{y} \mbox{, $x$ and $y$ intersect transversally}\}.$$ Here the number of intersection points counted with multiplicity, namely $k$ lines intersecting transversally at a point counts as  ``$k$ choose two".
\end{remark}

By Thurston's Earthquake Theorem (see the appendix of \cite{kerckhoff1983nielsen} for a proof), there is a unique earthquake path between any pair $X, X'$ of elements of $\T$. The next lemma can be proved by following the $\xy$ of two $X$-geodesics $x$ and $y$ along this unique geodesic path.

\begin{lemma}\label{lem:correspondence} Let $X, X' \in \T$ and let $x$ and $y$ be two $X$-geodesics. If $x'$, $y'$ are two $X'$-geodesics such that both pairs $x, x'$ and $y, y'$ are homotopic then   there is a canonical bijection between the $\xy$ and the $\xyp$.
\end{lemma}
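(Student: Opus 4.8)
The plan is to follow the $(x,y)$-intersection points along the unique earthquake path joining $X$ to $X'$ and to show, using that distinct geodesics meet transversally and that geodesics realize the geometric intersection number, that these points are neither created nor destroyed. First I would dispose of the degenerate case $i(\conju{x},\conju{y})=0$: then the $X$-geodesics $x,y$ have no transversal intersection, and the same holds for the $X'$-geodesics $x',y'$, so both the set of $(x,y)$-intersection points and the set of $(x',y')$-intersection points are empty and there is nothing to prove. This case in particular covers the situation in which $\conju{x}$ and $\conju{y}$ are powers of a common primitive class; so from now on I may assume $i(\conju{x},\conju{y})\ge 1$, whence no lift of $x$ to $\H$ coincides with a lift of $y$, for any hyperbolic metric. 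Consequently, for every metric, every intersection of the closed geodesic $x$ with the closed geodesic $y$ is transversal (a tangency of two geodesics in $\H$ would force them to coincide), although points of multiplicity larger than two may occur.

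By Thurston's Earthquake Theorem there is a unique earthquake path $E\colon[0,1]\to\T$ with $E(0)=X$ and $E(1)=X'$, and it is continuous. For $t\in[0,1]$ let $x_t$ (resp.\ $y_t$) be the $E(t)$-geodesic in the free homotopy class $\conju{x}$ (resp.\ $\conju{y}$); then $x_0=x$, $y_0=y$, and $x_1=x'$, $y_1=y'$ by uniqueness of geodesic representatives. Since a geodesic representative depends continuously (indeed smoothly) on the metric, $t\mapsto x_t$ and $t\mapsto y_t$ are continuous families of immersed closed curves, and by the previous paragraph $x_t$ and $y_t$ meet transversally for every $t$. Moreover, because geodesic representatives realize the minimal number of intersection points, with the multiplicity convention of Subsection~\ref{subs:intersection points}, the number of $(x_t,y_t)$-intersection points equals $i(\conju{x},\conju{y})$ for every $t$.

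I would then phrase the conclusion in terms of a covering space. Let $Z$ be the set of pairs $(t,P)$ with $t\in[0,1]$ and $P$ an $(x_t,y_t)$-intersection point. Transversality makes $Z$ locally a graph over the $t$-axis near each of its points, and since $x_t$ and $y_t$ are compact while $[0,1]$ is compact, the projection $Z\to[0,1]$ is a finite-sheeted covering. As $[0,1]$ is simply connected this covering is trivial, so lifting the path $t\mapsto t$ gives a canonical bijection between the fibre over $0$, which is the set of $(x,y)$-intersection points, and the fibre over $1$, which is the set of $(x',y')$-intersection points. It is essential here that an $(x_t,y_t)$-intersection point carries the data of its pair of arcs: several such points can momentarily lie over one physical point of $\Sigma$ when $x_t$ and $y_t$ develop a point of multiplicity larger than two, and the arc data is precisely what keeps these points separated in $Z$. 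The resulting bijection uses no auxiliary choices because the earthquake path is unique, which is the required canonicity.

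The step I expect to need the most care is the assembly in the last paragraph, namely showing that $Z$ really is a finite covering of $[0,1]$: one must check at once that $t\mapsto(x_t,y_t)$ is continuous in a topology fine enough to apply the implicit function theorem at a transversal crossing, that transversality persists for every $t$, and that the intersection count is genuinely independent of $t$; once these are in place the bijection is formal. As an alternative that bypasses the earthquake path, one can fix lifts $\tilde{x},\tilde{y}$ to $\H$ with stabilizers $\la g_x\ra$ and $\la g_y\ra$ and identify the $(x,y)$-intersection points with the double coset space $\la g_x\ra\backslash A/\la g_y\ra$, where $A=\{\gamma\in\pi_1(\Sigma):\text{the endpoints of }\gamma\tilde{y}\text{ separate those of }\tilde{x}\text{ on }\partial\H\}$; since the cyclic order of the fixed points of the elements of $\pi_1(\Sigma)$ on $\partial\H$ is a topological invariant of $\Sigma$, the set $A$, and hence this double coset space, is the same for $X$ and for $X'$, giving the canonical bijection directly.
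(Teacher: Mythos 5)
Your argument is correct and follows essentially the same route as the paper, which proves the lemma precisely by following the $(x,y)$-intersection points along the unique earthquake path from $X$ to $X'$; you simply supply the details (transversality of distinct geodesics, constancy of the intersection count, and the covering-space formalization over $[0,1]$) that the paper leaves implicit. Your closing double-coset alternative via linking of axis endpoints on $\partial\H$ is a valid bonus, in the same spirit as the paper's remark that the earthquake path can be replaced by simple connectivity of $\T$.
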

\begin{remark} Lemma~\ref{lem:correspondence} 
can be also proved using that $\T$ is simply connected, instead of Thurston's Earthquake Theorem.
\end{remark}

\subsection{Angles, metrics and earthquakes }

Fix a metric $X \in \T$ and $P$, an  $(x,y)$-intersection point of two  $X$-geodesics  $x$ and $y$. For each metric $Y \in \T$, 
the \emph{$Y$-  angle of $x$ and $y$ at $P$}, denoted  by $\phi_P(Y)$, is  defined as the angle at the intersection point corresponding to $P$ by Lemma~\ref{lem:correspondence}, of the two $Y$-geodesics homotopic to $x$ and $y$, measured from the geodesic homotopic to $y$ to the geodesic homotopic to $x$, following the orientation of the surface (see Figure~\ref{fig:phi}).  Clearly,  $\phi_P(X)$ is the angle at $P$, from $y$ to $x$ following the orientation of the surface.
 Observe that   $\phi_P(Y) \in (0,\pi)$. (The angle $\phi_P$ is defined from $y$ to $x$ as is done in \cite{wolpert_fenchel-nielsen_1982}).

\begin{figure}[ht]
  \centering
    \includegraphics[trim =65mm 95mm 10mm 45mm, clip, width=21cm]{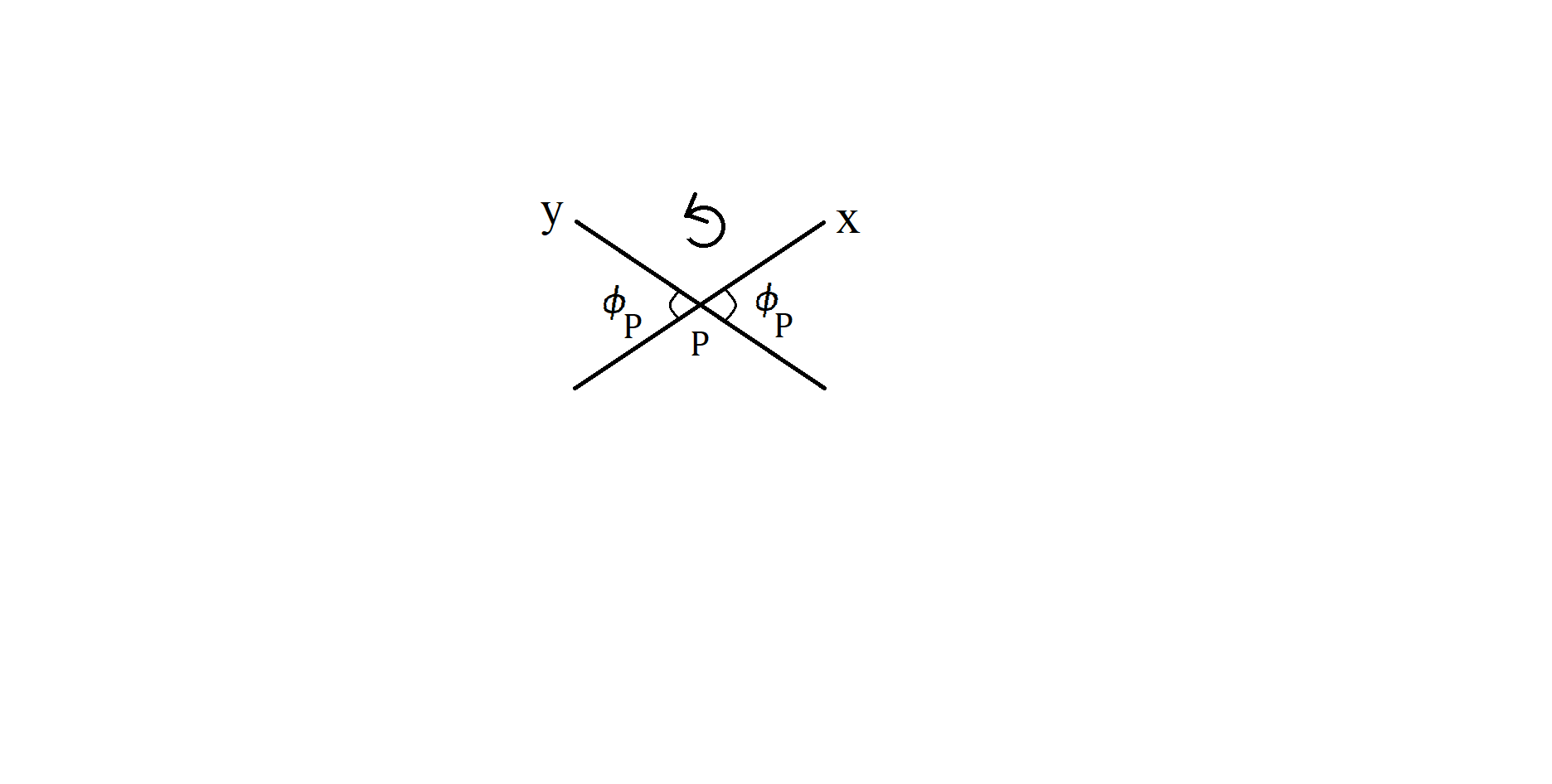}
     \caption{The   angle $\phi_P$
     }
     \label{fig:phi}
\end{figure}
Following \cite{kerckhoff1983nielsen},  for each simple   $X$-geodesic $x$ and each real number $t$,  $\mathcal{E}_{x}(t)$, is the element of $\T$ given by left twist deformation of $X$ along $x$ at the time $t$ starting at $X$.  
 (Clearly, $\mathcal{E}_x(t)$ also depends  on $X$).

 By \cite[Proposition 3.5]{kerckhoff1983nielsen}  and  \cite[Lemma 2.1]{kabiraj2018equal} we have,

\begin{lemma}\label{lem:decreasing} If $X \in \T$, and $x$ and $y$ are   two $X$-geodesics such that $x$ is simple, and $P$ is an $(x,y)$-intersection point then the function $\phi_P(\mathcal{E}_x(t))$ is a strictly decreasing function of $t$. 
\end{lemma}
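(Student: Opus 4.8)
\emph{Proof proposal.} I would work in the universal cover $\H$. Fix the metric $X$ and uniformise so that the holonomy of $x$ has the imaginary axis $\tilde x_0$ as its axis; let $\tilde P$ be the lift of $P$ lying on $\tilde x_0$ and let $\tilde y$ be the lift of $y$ through $\tilde P$, i.e.\ (after conjugating the generator if necessary) the axis of the holonomy of $y$. Transversality of $x$ and $y$ at $P$ forces $\tilde y$ to cross $\tilde x_0$, so $\tilde y$ has ideal endpoints $a<0<b$ in $\partial\H=\R\cup\{\infty\}$, one on each side of $0$. Computing the tangent line to the Euclidean semicircle carrying $\tilde y$ at its crossing point with $\tilde x_0$ and comparing with the vertical gives, with the angle convention fixed in the paper,
\[
\cos\phi_P(X)=\frac{a+b}{b-a},
\]
and this single identity will carry the whole argument: the right-hand side is a strictly increasing function of each of $a$ and $b$ on the region $a<0<b$.

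The plan is then to follow the endpoints along the earthquake. The earthquake map $E_t\colon\H\to\H$ of $\mathcal{E}_x(t)$ is a hyperbolic isometry on each complementary region of the full preimage $\widetilde x$ of $x$, jumping across each leaf by a translation along that leaf, and is equivariant for the two holonomies; normalise it to be the identity on the complementary region immediately to the left of $\tilde P$. Then $E_t$ fixes the two endpoints of $\tilde x_0$, so the developed $\mathcal{E}_x(t)$-picture still has $\tilde x_0$ as the imaginary axis, and since $E_t$ conjugates the holonomy of $y$ before the earthquake to that after, the developed $\mathcal{E}_x(t)$-geodesic representing $y$ has ideal endpoints $E_t(a)$ and $E_t(b)$ (boundary extension of $E_t$), which by Lemma~\ref{lem:correspondence} still lie on either side of $0$. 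Hence
\[
\cos\phi_P(\mathcal{E}_x(t))=\frac{E_t(a)+E_t(b)}{E_t(b)-E_t(a)}.
\]
Now I would use the defining property of a \emph{left} earthquake (Thurston; cf.\ \cite{kerckhoff1983nielsen}): its boundary extension moves every point of $\partial\H$ weakly to the left, and strictly across any leaf it genuinely shears. In our normalisation this gives, for $t'>t$, that the $b$-side endpoint moves strictly in the positive direction --- it picks up the shear across $\tilde x_0$ itself, a translation along $\tilde x_0$ pushing that side toward $\infty$ --- while the $a$-side endpoint moves weakly in the positive direction; i.e.\ $E_{t'}(b)>E_t(b)$ and $E_{t'}(a)\ge E_t(a)$, each remaining on its side of $0$.

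Combining the two displays with the monotonicity of $(p,q)\mapsto (p+q)/(q-p)$ in each variable then yields $\cos\phi_P(\mathcal{E}_x(t'))>\cos\phi_P(\mathcal{E}_x(t))$ whenever $t'>t$, and hence, since $\phi_P$ takes values in $(0,\pi)$ where $\cos$ is strictly decreasing, that $\phi_P(\mathcal{E}_x(t))$ is strictly decreasing. (In the paper one can instead just cite \cite[Proposition 3.5]{kerckhoff1983nielsen} and \cite[Lemma 2.1]{kabiraj2018equal}, which already contain the monotonicity of the intersection angle under the twist; the argument above is essentially an unpacking of them.) The step I expect to be the real obstacle is the middle one: fixing a normalisation of $E_t$ under which $\tilde x_0$ is preserved, and then applying the precise ``moves weakly to the left, strictly across a sheared leaf'' statement so as to conclude that \emph{both} ideal endpoints of $\tilde y$ are monotone in $t$ with at least one strictly monotone. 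The trigonometric identity $\cos\phi_P=(a+b)/(b-a)$ and the final sign bookkeeping are routine.
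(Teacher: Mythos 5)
Your proposal is correct, and it reconstructs essentially the argument behind the citations the paper relies on: the paper offers no proof of Lemma~\ref{lem:decreasing} beyond invoking \cite[Proposition 3.5]{kerckhoff1983nielsen} and \cite[Lemma 2.1]{kabiraj2018equal}, and those references prove exactly what you wrote --- normalize the earthquake to fix the stratum adjacent to the lift $\tilde x_0$, track the two ideal endpoints of $\tilde y$ under the boundary extension (one moving strictly, one weakly, in the same rotational sense, neither crossing the fixed endpoints $0,\infty$), and convert this into strict monotonicity of $\cos\phi_P=(a+b)/(b-a)$. The one step you rightly flag as delicate, the weak monotonicity of the boundary extension relative to the chosen base stratum together with strictness across the genuinely sheared leaf $\tilde x_0$, is a standard property of left earthquakes and holds as you state it, so up to the sign conventions you defer, the argument is complete.
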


\begin{remark} The aforementioned  Proposition 3.5 of \cite{kerckhoff1983nielsen} is  proved assuming that both $x$ and $y$ are simple geodesics. In the same work,  it is stated that it holds when $y$ is  non-simple. An explicit proof of this fact can be found in \cite[Lemma 2.1]{kabiraj2018equal}. 
\end{remark}
We include the following result from \cite[Theorem 7.38.6]{beardon2012geometry} and part of its proof because both will be used later.  
\begin{theorem}\label{theo:beardon} Let $a$ and $b$ be  hyperbolic isometries of the hyperbolic plane, whose axes  intersect at a point $P$. Denote by $\beta$  the angle at $P$ of these axes in the forward direction of $a$ and $b$.  Then the product $a.b$ is hyperbolic and
$$\cosh\left(\frac{t_{a.b}}{2}\right)=
\cosh\left(\frac{t_a}{2}\right)\cosh\left(\frac{t_b}{2}\right)+
\sinh\left(\frac{t_b}{2}\right)\sinh\left(\frac{t_b}{2}\right)\cos(\beta),$$
where $t_\alpha$ denotes the translation length of $\alpha$.
\end{theorem}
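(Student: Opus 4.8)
The plan is to reduce the identity to a single trace computation in $\mathrm{SU}(1,1)$ acting on the Poincar\'e disk. The translation lengths $t_a,t_b$ and the forward angle $\beta$ depend only on the conjugacy class of the pair $(a,b)$ in the orientation-preserving isometry group, and so does $t_{ab}$; hence I may first conjugate so that the common point $P$ of the two axes is the center $0$ of the disk. In this normalization a hyperbolic isometry whose axis runs through $0$ is, for a suitable lift to $\mathrm{SU}(1,1)$, of the form $R_{\theta}\,T_{\ell}\,R_{\theta}^{-1}$, where
\[
T_{\ell}=\begin{pmatrix}\cosh(\ell/2)&\sinh(\ell/2)\\ \sinh(\ell/2)&\cosh(\ell/2)\end{pmatrix},\qquad
R_{\theta}=\begin{pmatrix}e^{i\theta/2}&0\\ 0&e^{-i\theta/2}\end{pmatrix},
\]
$T_{\ell}$ being the translation by hyperbolic distance $\ell$ along the real diameter toward $+1$ and $R_{\theta}$ the rotation by $\theta$ about $0$. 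Writing $a=R_{\theta_a}T_{t_a}R_{\theta_a}^{-1}$ and $b=R_{\theta_b}T_{t_b}R_{\theta_b}^{-1}$, the forward ray of the axis of $a$ points to the ideal point $e^{i\theta_a}$ and that of $b$ to $e^{i\theta_b}$, so the angle at $P$ between the two forward rays is $\beta$ and $\cos\beta=\cos(\theta_a-\theta_b)$.

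Next I would compute $\mathrm{tr}(ab)$. By cyclicity of the trace, $\mathrm{tr}(ab)=\mathrm{tr}\!\big(T_{t_a}\,R_{\gamma}\,T_{t_b}\,R_{\gamma}^{-1}\big)$ with $\gamma=\theta_b-\theta_a$, and a short $2\times2$ multiplication shows that $R_{\gamma}T_{t_b}R_{\gamma}^{-1}$ has diagonal entries $\cosh(t_b/2)$ and off-diagonal entries $e^{\pm i\gamma}\sinh(t_b/2)$; multiplying on the left by $T_{t_a}$ and summing the diagonal entries gives
\[
\mathrm{tr}(ab)=2\cosh(t_a/2)\cosh(t_b/2)+2\sinh(t_a/2)\sinh(t_b/2)\cos\gamma ,
\]
and $\cos\gamma=\cos\beta$. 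Since the two axes are distinct geodesics crossing at $P$, we have $\beta\in(0,\pi)$, so $\cos\beta>-1$, and as $\sinh(t_a/2)\sinh(t_b/2)>0$ the right-hand side exceeds $2\big(\cosh(t_a/2)\cosh(t_b/2)-\sinh(t_a/2)\sinh(t_b/2)\big)=2\cosh\!\big((t_a-t_b)/2\big)\ge 2$; hence $ab$ is hyperbolic. A hyperbolic isometry $\alpha$ satisfies $|\mathrm{tr}(\alpha)|=2\cosh(t_\alpha/2)$, and the lift of $ab$ produced above already has positive trace, so dividing the displayed identity by $2$ yields the asserted formula.

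The one delicate point is the orientation bookkeeping: one has to fix consistently the forward direction of $T_\ell$, the sense of the rotations $R_\theta$, and the orientation in which $\beta$ is measured, so that the rotation sandwiched between the two translations is exactly $R_{\pm\beta}$ and not $R_{\pi\pm\beta}$. The two degenerate configurations — coincident axes with equal forward directions (where one must recover $t_{ab}=t_a+t_b$) and with opposite forward directions (where one must recover $t_{ab}=|t_a-t_b|$) — serve as a convenient check that pins down every sign; everything else is routine $2\times2$ matrix algebra. This is the argument of \cite[Theorem 7.38.6]{beardon2012geometry}, a portion of which we will reuse below.
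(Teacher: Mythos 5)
Your computation is correct, and it proves the statement (including hyperbolicity of $a.b$) by a route genuinely different from the paper's. You normalize $P$ to the center of the disk and reduce everything to a trace identity in $\mathrm{SU}(1,1)$: writing $a$ and $b$ as conjugated translation matrices, $\mathrm{tr}(ab)=2\cosh(t_a/2)\cosh(t_b/2)+2\sinh(t_a/2)\sinh(t_b/2)\cos\beta>2$, and the positive-trace lift gives the formula. The paper instead follows Beardon's geometric argument: decomposing $a$ and $b$ into half-turns about points on their axes, one finds the point $Q$ on the axis of $a$ at distance $t_a/2$ from $P$ in the forward direction and the point $R$ on the axis of $b$ at distance $t_b/2$ from $P$ in the backward direction; then $a.b$ is the hyperbolic isometry whose axis is the geodesic through $R$ and $Q$ with translation length $2\,d(R,Q)$, and the formula is the hyperbolic law of cosines in the triangle $PQR$ (whose angle at $P$ is $\pi-\beta$). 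Your approach buys a short, sign-robust verification with no figure-dependent bookkeeping; the paper's approach buys, beyond the formula, the explicit location of the axis of $a.b$ through the half-way points $Q$ and $R$, and it is precisely this extra geometric content (midpoints of the ``laps'' of the zigzag lifts lying on the axis) that is reused later, in Lemma~\ref{lem:cosh} and in the proof of Lemma~\ref{lem:smaller angle}; a pure trace computation would not supply that. Two small remarks: your closing attribution is off, since the argument of \cite[Theorem 7.38.6]{beardon2012geometry} reproduced in the paper is the geometric one, not the matrix computation; and your computation confirms that the coefficient in the displayed formula should be $\sinh\left(\frac{t_a}{2}\right)\sinh\left(\frac{t_b}{2}\right)$, so the repeated $\sinh\left(\frac{t_b}{2}\right)$ in the statement is a typo.
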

\begin{proof}
Denote by $Q$ the point on the axis of $a$ at distance $t_a/2$ of $P$ in the positive direction of the axis of  $a$ and by $R$ the point on the axis of $b$ at distance $t_b/2$ of $P$ in the negative direction of the axis of $b$. The axis of $a.b$ is the geodesic containing the oriented line from $R$ to $Q$ and the translation length of $a.b$ equals twice the distance between $R$ and $Q$. (see Figure~\ref{fig:beardon}).
The formula follows from the Cosine formula (see \cite{beardon2012geometry} for more details). 
\begin{figure}[ht]
  \centering
    \includegraphics[trim =35mm 65mm 10mm 10mm, clip, width=13cm]{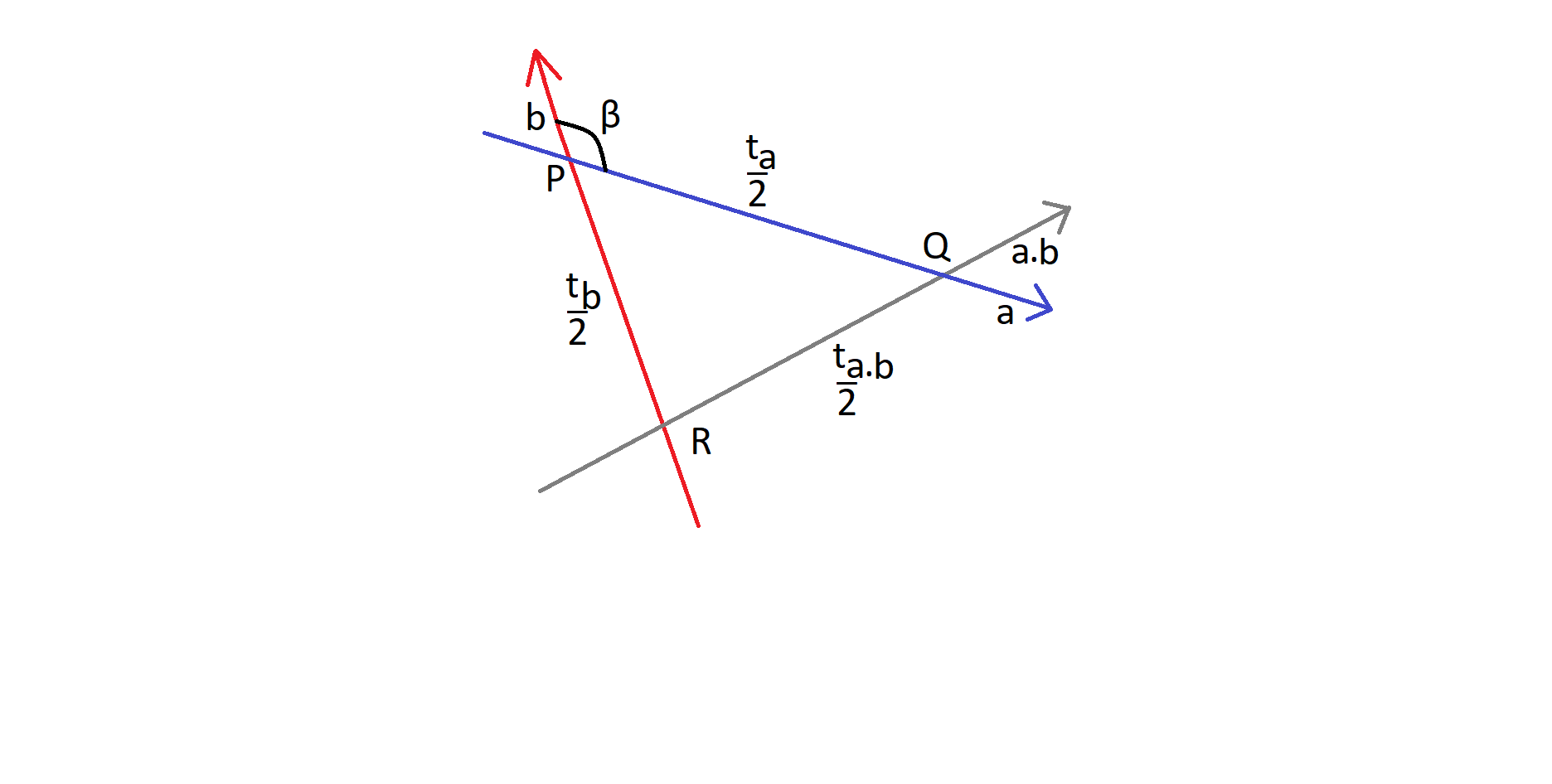}
     \caption{Theorem~\ref{theo:beardon}}\label{fig:beardon}
\end{figure}
\end{proof}
For each closed curve $x$ on $\Sigma$, the length of the unique $X$-geodesic in $\conju{x}$ is denoted by  $\ell_x(X)$.

\begin{figure}[ht]
  \centering
    \includegraphics[trim =75mm 45mm 10mm 40mm, clip, width=13cm]{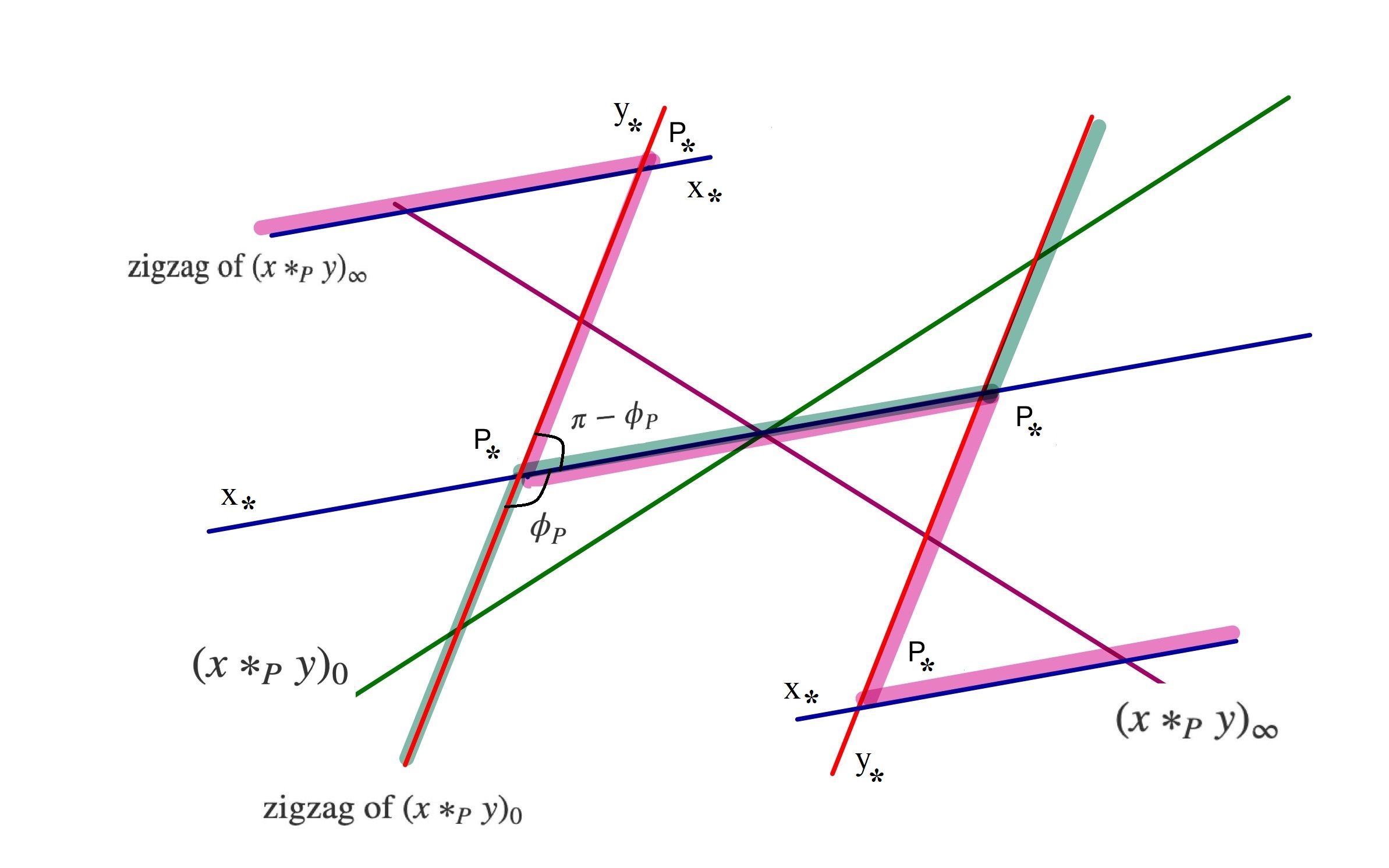}
     \caption{Lifts of ${(x*_p y)}_0$  and  ${(x*_p y)}_\infty$. }\label{fig:lifts}
\end{figure}

Given two $X$-geodesics $x$ and $y$ and an $(x,y)$-intersection point $P$, a lift of ${(x*_p y)}_0$ (respectively of ${(x*_p y)}_\infty$) to the upper half plane $\H$  is a bi-infinite piecewise geodesic (see Figure \ref{fig:lifts}) consist of alternative geodesic segments of lift of the geodesics $x$  and $y$ (denoted by $x_*$ and $y_*$ in Figure \ref{fig:lifts} respectively). The  geodesic segments of lift of the geodesics $x$  and $y$ intersect each other in the lifts of the point $P$ (denoted by $P_*$ in Figure \ref{fig:lifts}).

 The next lemma follows directly from Theorem~\ref{theo:beardon}, and from Figure \ref{fig:lifts} by adding an appropriate orientation to the geodesics $x$ and $y$.
 
 \begin{lemma}\label{lem:cosh} If $x$ and $y$ are two closed $X$-geodesics and let $P$ be an $(x,y)$-intersection point.  Then we have
 $$\cosh\left(\frac{\ell_{(x*_P y)_0}}{2}\right)=
\cosh\left(\frac{\ell_x}{2}\right)\cosh\left(\frac{\ell_y}{2}\right)-\sinh\left(\frac{\ell_x}{2}\right)\sinh\left(\frac{\ell_y}{2}\right)\cos(\phi_P)$$
$$\cosh\left(\frac{\ell_{(x*_P y)_\infty}}{2}\right)=
\cosh\left(\frac{\ell_x}{2}\right)\cosh\left(\frac{\ell_y}{2}\right)+\sinh\left(\frac{\ell_x}{2}\right)\sinh\left(\frac{\ell_y}{2}\right)\cos(\phi_P),$$
where all lengths and angles are computed with respect to any metric $Y$ in $\T$. 
 \end{lemma}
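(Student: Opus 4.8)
The plan is to realize each smoothing as a product of two hyperbolic isometries whose axes meet at (a lift of) $P$, and then apply Theorem~\ref{theo:beardon} verbatim. Fix the metric $Y\in\T$ and work in $\H$ with $\pi_1(\Sigma)$ acting by the holonomy of $Y$. Choose a lift $P_*$ of $P$. Since $P$ is an $(x,y)$-intersection point, through $P_*$ there is a unique lift $x_*$ of the $Y$-geodesic homotopic to $x$ and a unique lift $y_*$ of the $Y$-geodesic homotopic to $y$, consistently with Lemma~\ref{lem:correspondence}; by definition the $Y$-angle $\phi_P(Y)\in(0,\pi)$ is the angle at $P_*$ from $y_*$ to $x_*$, measured following the orientation of the surface.

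First I would handle $(x*_P y)_\infty$. Orient $x$ and $y$ by the convention defining $(x*_P y)_\infty$, i.e.\ so that the frame formed by the outgoing branches of $x$ and $y$ at $P$ is negatively oriented; let $v_x,v_y$ be the resulting directions at $P_*$ along $x_*,y_*$. Let $a$ (resp.\ $b$) be the hyperbolic isometry with axis $x_*$ (resp.\ $y_*$), forward direction $v_x$ (resp.\ $v_y$), and translation length $\ell_x(Y)$ (resp.\ $\ell_y(Y)$). Reading off Figure~\ref{fig:lifts}, a lift through $P_*$ of the loop product $(x*_P y)_\infty$ is the piecewise geodesic alternating segments of $x_*$ and $y_*$, and it is invariant under $ab$; hence $(x*_P y)_\infty$ is freely homotopic to the closed geodesic in the conjugacy class of $ab$ and $\ell_{(x*_P y)_\infty}(Y)=t_{ab}$. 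Since the frame $(v_x,v_y)$ is negatively oriented, the angle at $P_*$ from $v_y$ to $v_x$ measured following the orientation of the surface lies in $(0,\pi)$; as the oriented angle from a ray of $y_*$ to a ray of $x_*$ takes only two values (differing by $\pi$), exactly one of which is in $(0,\pi)$, this angle equals $\phi_P(Y)$. Therefore the (unsigned) angle $\beta$ at $P_*$ between the forward directions of $a$ and $b$ is $\phi_P(Y)$, and Theorem~\ref{theo:beardon} gives the second displayed identity of the lemma.

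For $(x*_P y)_0$ I would use the opposite convention: orient $x$ and $y$ so that the outgoing frame is positively oriented, which flips exactly one of the two orientations, say that of $y$. Then $b$ is replaced by $b^{-1}$ (same axis $y_*$, forward direction $-v_y$, same translation length $\ell_y(Y)$), the curve $(x*_P y)_0$ becomes freely homotopic to the conjugacy class of $ab^{-1}$, and $\ell_{(x*_P y)_0}(Y)=t_{ab^{-1}}$. The angle between the forward directions of $a$ and $b^{-1}$ is $\pi-\beta=\pi-\phi_P(Y)$, so Theorem~\ref{theo:beardon} together with $\cos(\pi-\phi_P)=-\cos\phi_P$ yields the first displayed identity. (That $(x*_P y)_0$ might instead correspond to $ba^{-1}$, $a^{-1}b$, etc., is harmless, since these are all conjugate to $ab^{-1}$ or its inverse and carry the same translation length, and the forward directions entering Theorem~\ref{theo:beardon} give the same angle up to the irrelevant replacement $\beta\leftrightarrow\beta$.)

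I expect the only genuine obstacle to be the orientation bookkeeping: checking from the definitions of the two smoothings and from Figure~\ref{fig:lifts} that it is precisely the ``$\infty$'' convention (negatively oriented outgoing frame) for which the angle between the two forward directions equals $\phi_P$ — so that $+\cos\phi_P$ attaches to $(x*_P y)_\infty$ and $-\cos\phi_P$ to $(x*_P y)_0$ — and that the loop product at $P$ is read as $ab$ rather than a product involving the wrong orientations. Once this is pinned down against the figure, the lemma is a direct substitution into Theorem~\ref{theo:beardon}.
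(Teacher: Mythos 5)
Your proof is correct and takes essentially the same route as the paper: the paper obtains Lemma~\ref{lem:cosh} directly from Theorem~\ref{theo:beardon} and the zigzag lifts of Figure~\ref{fig:lifts} ``by adding an appropriate orientation to the geodesics,'' which is exactly the isometry-product argument you spell out. Your orientation bookkeeping --- realizing the loop product as the conjugacy class of $ab$ (resp.\ $ab^{-1}$), so that $+\cos\phi_P$ attaches to the $\infty$-smoothing and $-\cos\phi_P$ to the $0$-smoothing --- just makes explicit what the paper leaves to the figure.
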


The next lemma states that, a $0$-term of the bracket of two curves, and an $\infty$-term of the bracket of the same curves are always distinct when one of them is simple. It is easier to state it in terms of geodesics (instead of free homotopy classes of curves). 
 
\begin{lemma}\label{lem:zero-infinty} Let $X$ be a hyperbolic metric on $\Sigma$. If $x$ and $y$ are closed,   $X$-geodesics, such that $x$ is simple, and $P$ and $Q$ are two (not necessarily distinct) $(x,y)$-intersection points then $\conju{( x*_P y)_0} \ne \conju{( {x}*_Q {y})_\infty}.$
\end{lemma}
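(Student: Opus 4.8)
The plan is to derive a contradiction from the assumption $\conju{(x*_P y)_0} = \conju{(x*_Q y)_\infty}$ by using Lemma~\ref{lem:cosh} together with the earthquake deformation along the simple curve $x$. The key point is that the length of a free homotopy class is a well-defined function on Teichm\"uller space, so if the two smoothings are freely homotopic, their length functions must agree for \emph{every} metric $Y \in \T$. I would then specialize $Y$ to run along the earthquake path $\mathcal{E}_x(t)$ and track how the two sides of the putative identity evolve.

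\medskip

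First I would record, using Lemma~\ref{lem:cosh}, the two identities
$$\cosh\!\left(\frac{\ell_{(x*_P y)_0}}{2}\right)=
\cosh\!\left(\frac{\ell_x}{2}\right)\cosh\!\left(\frac{\ell_y}{2}\right)-\sinh\!\left(\frac{\ell_x}{2}\right)\sinh\!\left(\frac{\ell_y}{2}\right)\cos(\phi_P),$$
$$\cosh\!\left(\frac{\ell_{(x*_Q y)_\infty}}{2}\right)=
\cosh\!\left(\frac{\ell_x}{2}\right)\cosh\!\left(\frac{\ell_y}{2}\right)+\sinh\!\left(\frac{\ell_x}{2}\right)\sinh\!\left(\frac{\ell_y}{2}\right)\cos(\phi_Q),$$
all lengths and angles evaluated at an arbitrary $Y \in \T$. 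If the two classes coincide, the left-hand sides are equal for all $Y$; subtracting and using that $\sinh(\ell_x/2)\sinh(\ell_y/2) > 0$ (the curves are essential, hence have positive length), I get
$$\cos\phi_P(Y) = -\cos\phi_Q(Y) \qquad\text{for all } Y \in \T,$$
i.e. $\phi_P(Y) + \phi_Q(Y) = \pi$ for every metric $Y$, since both angles lie in $(0,\pi)$. Next I would feed this into the earthquake deformation: by Lemma~\ref{lem:decreasing}, both $t \mapsto \phi_P(\mathcal{E}_x(t))$ and $t \mapsto \phi_Q(\mathcal{E}_x(t))$ are \emph{strictly decreasing} (here I crucially use that $x$ is simple). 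But then their sum $\phi_P(\mathcal{E}_x(t)) + \phi_Q(\mathcal{E}_x(t))$ is strictly decreasing as well, contradicting that it is identically $\pi$. This is the contradiction that proves the lemma.

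\medskip

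The main obstacle — and the reason the simplicity hypothesis is essential — is the step invoking Lemma~\ref{lem:decreasing} for \emph{both} intersection points simultaneously: the earthquake monotonicity of the angle $\phi_P(\mathcal{E}_x(t))$ requires that the deformation be a genuine left twist along the \emph{simple} geodesic $x$, and the sign of the derivative (strictly negative) is uniform across all $(x,y)$-intersection points precisely because they all sit on the same twisting curve. A secondary subtlety is the possibility $P = Q$, but this is harmless: then the relation reads $2\phi_P(Y) = \pi$, i.e. $\phi_P(Y) \equiv \pi/2$, which is immediately contradicted by strict monotonicity along $\mathcal{E}_x(t)$. One should also check the degenerate-looking but genuinely fine point that $\ell_x$ and $\ell_y$ are positive for essential curves, so the coefficient of $\cos\phi$ never vanishes and the cancellation argument is valid throughout $\T$.
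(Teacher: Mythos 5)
Your proposal is correct and follows essentially the same argument as the paper: equality of the classes forces equality of length functions on all of $\T$, Lemma~\ref{lem:cosh} then gives $\phi_P(Y)+\phi_Q(Y)=\pi$ for every $Y$, and Lemma~\ref{lem:decreasing} applied to the earthquake along the simple curve $x$ makes both angles strictly decrease, contradicting the constancy of their sum. Your extra remarks on the case $P=Q$ and the positivity of the lengths are fine but not needed beyond what the paper's proof already contains.
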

\begin{proof} 
We argue by contradiction. If $\conju{( x*_P y)_0} = \conju{( {x}*_Q {y})_\infty}$ then $\ell_{( x*_P y)_0}(Y) = \ell_{( {x}*_Q {y})_\infty}(Y)$ for any $Y\in\T$.
By Lemma~\ref{lem:cosh} we have   $$\cos(\phi_{P}(Y))=-\cos(\phi_{Q}(Y)),$$ which implies,  
\begin{equation}
\phi_{P}(Y)+\phi_{Q}(Y)=\pi.\label{eq:supplementary}
\end{equation}
for all $Y \in \T$. 
On the other hand, as $x$ is simple, by Lemma~\ref{lem:decreasing}, by twisting the metric $X$  about the geodesic  $x$, both terms on the right side of Equation~(\ref{eq:supplementary}) strictly decrease. Since they add up to a constant, this is  not possible. Hence, the proof is complete.
\end{proof}

\subsection{Proof of the counting intersection theorem}

If  $\conju{x}$ and $\conju{y}$ have disjoint representatives, the result follows directly. Assume that $i(\conju{x}, \conju{y}) >0$.  From the definition of the bracket,  it follows that
$$\left[\conju{x},\conju{y}\right] =\sum\limits_{P \in x \cap y } \conju{(x*_P y)}_0-\conju{(x*_P {y})}_\infty.$$Fix a metric $X \in \mathcal{T}$.    In order to simplify the notation, assume that $x$ and $y$ are $X$-geodesics. This implies that $x$ and $y$ intersect  in $i(\conju{x}, \conju{y})$ points,  the geometric intersection number of the class. 

Suppose that the number of terms of the bracket is strictly smaller than $2.i(\conju{x},\conju{y})$. Hence,  there  exist two (not necessarily distinct)   (x,y)-intersection points $P$ and $Q$  such that a pair of terms corresponding $P$  and $Q$ cancel. 

The terms corresponding to $P$ are  $\conju{( x*_P y)_0} - \conju{( {x}*_P {y})_\infty}$
and the terms corresponding to $Q$ are $\conju{( x*_Q y)}_0 - \conju{( {x}*_Q y)}_\infty.$ 

The assumption of cancellation implies that  either   $\conju{( x*_P y)}_0=\conju{( x*_Q y)}_\infty$ or 
$\conju{( x*_P y)}_\infty=\conju{( x*_Q y)}_0$
which is not possible by Lemma~\ref{lem:zero-infinty}. Thus, the proof is complete.

\begin{remark}
In the  case  of the Goldman bracket of two directed curves, cancellation of two terms (regardless whether they are simple or not) implies that the corresponding directed angles are congruent, \cite[Theorem 5.1]{kabiraj2018equal}. (The directed angle between two directed geodesics intersecting at a point $P$ is the angle between the positive direction of both curves).

In the case of the TWG-bracket, cancellation of two terms implies that the (undirected)  angles are supplementary.\end{remark}

\section{TWG-Lie Bracket of powers of curves and proof of the Center Theorem}\label{sec:center thm}

Let $X \in \T$. If $P$ is an $(x,y)$-intersection point of two $X$-geodesics $x$ and $y$ then for each positive integer $m$, the geodesic $x^m$ (that goes  $m$ times around $x$) and $y$ also intersect at $P$. 

\begin{remark}\label{rem:powers} The angles at $P$ of $x$ and $y$ and of $x^n$ and $y$ are congruent (they are the same angle). Thus, we can (and will) consider $P$ as $(x^m,y)$-intersection point and the angle $\phi_P(Y)$ will also denote the angle at $P$ of the $Y$-geodesic homotopic to $x^m$ and the $Y$-geodesic homotopic to $y$.
\end{remark}

\begin{proposition}\label{prop:values of m}	Let $X \in \T$ be hyperbolic metric on  $\Sigma$. Let $x,y$ and  $z$ be three  $X$-geodesics.  Let  $P$ and $Q$ be two $(x,y)$ and $(x,z)$-intersection points respectively.  
\begin{enumerate}
\item      If there exist two distinct positive values of $m$ such that  $\conju{(x^m*_Py)}_0=\conju{(x^m*_Qz)}_0$   (resp. $\conju{(x^m*_Py)}_\infty=\conju{(x^m*_Qz)}_\infty$  )   then $\ell_y(Y)=\ell_z(Y)$ and $\phi_P(Y)=\phi_Q(Y),$ for all $Y \in \T.$      
\item      If there exist two distinct positive values of $m$ such that  $\conju{(x^m*_Py)}_0=\conju{(x^m*_Qz)}_\infty$   then $\ell_y(Y)=\ell_z(Y)$ and $\phi_P(Y)+\phi_Q(Y)=\pi$ for all $Y \in \T$.
\end{enumerate}
\end{proposition}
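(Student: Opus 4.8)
The plan is to exploit the key formula from Lemma~\ref{lem:cosh}, together with the observation in Remark~\ref{rem:powers} that the angle $\phi_P$ does not change when we replace $x$ by $x^m$, while the length $\ell_{x^m}(Y) = m\,\ell_x(Y)$ grows linearly in $m$. Concretely, for each metric $Y \in \T$, Lemma~\ref{lem:cosh} applied to the geodesic $x^m$ and $y$ at $P$ gives
\[
\cosh\!\left(\tfrac{\ell_{(x^m*_P y)_0}(Y)}{2}\right)
= \cosh\!\left(\tfrac{m\,\ell_x(Y)}{2}\right)\cosh\!\left(\tfrac{\ell_y(Y)}{2}\right)
- \sinh\!\left(\tfrac{m\,\ell_x(Y)}{2}\right)\sinh\!\left(\tfrac{\ell_y(Y)}{2}\right)\cos(\phi_P(Y)),
\]
and similarly for $(x^m*_Q z)_0$ with $\ell_y$, $\phi_P$ replaced by $\ell_z$, $\phi_Q$, and for the $\infty$-terms with the sign of the $\cos$ term reversed.

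First I would translate each of the hypothesized equalities of free homotopy classes into an equality of lengths valid for \emph{every} $Y \in \T$: if $\conju{(x^m*_Py)}_0=\conju{(x^m*_Qz)}_0$ then $\ell_{(x^m*_Py)_0}(Y) = \ell_{(x^m*_Qz)_0}(Y)$ for all $Y$, hence the two right-hand sides above are equal for all $Y$. Fix an arbitrary $Y$ and abbreviate $a = \cosh(\ell_x(Y)/2) \ge 1$ (actually $>1$ since $x$ is a nontrivial geodesic), so that $\cosh(m\ell_x(Y)/2) = T_m(a)$ and $\sinh(m\ell_x(Y)/2) = U_{m-1}(a)\sinh(\ell_x(Y)/2)$ in terms of Chebyshev polynomials — or, more elementarily, write $\lambda = e^{\ell_x(Y)/2} > 1$ so that $\cosh(m\ell_x(Y)/2) = (\lambda^m+\lambda^{-m})/2$ and $\sinh(m\ell_x(Y)/2) = (\lambda^m-\lambda^{-m})/2$. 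The equation $\mathrm{RHS}_P(m) = \mathrm{RHS}_Q(m)$ then becomes, after collecting the coefficients of $\lambda^m$ and $\lambda^{-m}$, a relation of the form
\[
A\,\lambda^m + B\,\lambda^{-m} = 0,
\]
where $A = \cosh(\ell_y(Y)/2) - \sinh(\ell_y(Y)/2)\cos(\phi_P(Y)) - \cosh(\ell_z(Y)/2) + \sinh(\ell_z(Y)/2)\cos(\phi_Q(Y))$ and $B$ is the analogous expression with the sign of the $\sinh\cos$ terms flipped (for case~(1); in case~(2) the $z$-contributions to $A$ and $B$ get swapped, since the $\infty$-term flips the sign of $\cos\phi_Q$). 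Since this holds for the two distinct positive integer values $m_1 \ne m_2$, and $\lambda \ne 1$, the two equations $A\lambda^{m_1} + B\lambda^{-m_1} = 0$ and $A\lambda^{m_2}+B\lambda^{-m_2}=0$ force $A = B = 0$ (the determinant $\lambda^{m_1-m_2} - \lambda^{m_2-m_1}$ is nonzero).

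Then I would solve $A = B = 0$. Adding the two gives $\cosh(\ell_y(Y)/2) = \cosh(\ell_z(Y)/2)$, hence $\ell_y(Y) = \ell_z(Y)$ (both nonnegative); subtracting gives $\sinh(\ell_y(Y)/2)\cos(\phi_P(Y)) = \sinh(\ell_z(Y)/2)\cos(\phi_Q(Y))$ in case~(1), which combined with $\ell_y(Y)=\ell_z(Y)$ and $\sinh(\ell_y(Y)/2) > 0$ yields $\cos(\phi_P(Y)) = \cos(\phi_Q(Y))$, so $\phi_P(Y) = \phi_Q(Y)$ since both lie in $(0,\pi)$ where $\cos$ is injective. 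In case~(2) the same manipulation gives $\cos(\phi_P(Y)) = -\cos(\phi_Q(Y))$, i.e. $\phi_P(Y) + \phi_Q(Y) = \pi$. Since $Y \in \T$ was arbitrary, this is exactly the claimed conclusion. The only subtlety — and the step I'd be most careful about — is bookkeeping the exact placement of the $\pm$ signs when matching a $0$-term with an $\infty$-term in case~(2), and making sure the "two distinct values of $m$" really does give two independent linear equations in $(A,B)$; this rests on $\lambda = e^{\ell_x(Y)/2} > 1$, which holds because $x$ is a genuine (noncontractible, and in this setting simple) closed geodesic, so $\ell_x(Y) > 0$. No appeal to simplicity of $x$ or to the earthquake lemmas is needed here — this Proposition is purely the Chebyshev/Vandermonde argument — those tools enter only later when one uses the conclusion to derive a contradiction.
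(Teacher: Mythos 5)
Your proposal is correct and follows essentially the same route as the paper: both apply Lemma~\ref{lem:cosh} with $\ell_{x^m}=m\,\ell_x$ and the $m$-independent angle from Remark~\ref{rem:powers}, then use the two distinct values of $m$ to force the $m$-independent coefficients to vanish, yielding $\ell_y=\ell_z$ and the stated angle relation (the paper phrases this via distinct values of $\coth(\tfrac12\ell_{x^m})$ rather than your $\lambda^{\pm m}$ determinant, a cosmetic difference). Your closing remark is also consistent with the paper: simplicity of $x$ and the earthquake lemmas are not used in this proposition, only later.
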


\begin{proof} 
We prove  (1); the proof of  (2) is analogous. 
From now on, we will fix a metric $Y \in \T$. To simplify the notation, we will not write the dependence on $Y$ (for instance, we will write $\cos(\phi_P)$ instead of $\cos(\phi_P)(Y)$).   We follow  the notation indicated in Remark~\ref{rem:powers}. 

Since $\conju{(x^m*_Py)}_0=\conju{(x^m*_Qz)}_0,$ $\ell_{(x^m*_Py)_0}= \ell_{(x^m*_Qz)_0}.$ By Lemma \ref{lem:cosh}, we have
$$\cosh(\frac{1}{2} \ell_{{x^m}})\cosh(\frac{1}{2} \ell_{{y}})-\sinh(\frac{1}{2} \ell_{{x^m}})\sinh(\frac{1}{2} \ell_{{y}})\cos(\phi_P)=$$$$\cosh(\frac{1}{2} \ell_{{x^m}})\cosh(\frac{1}{2} \ell_{{z}})-\sinh(\frac{1}{2} \ell_{{x^m}})\sinh(\frac{1}{2} \ell_{{z}})\cos(\phi_Q).$$
	
This implies 
$$ \coth(\frac{1}{2} \ell_{{x^m}})\{\cosh(\frac{1}{2} \ell_{{y}})-\cosh(\frac{1}{2} \ell_{{z}})\}=\sinh(\frac{1}{2} \ell_{{y}})\cos(\phi_P)- \sinh(\frac{1}{2} \ell_{{z}})\cos(\phi_Q)	
$$
Note that that the right-hand side of the above equation does not depend on $m$. Also, if $m_1$ and $m_2$ are distinct positive integers then $\coth(\frac{1}{2} \ell_{x^{m_1}}) \ne \coth(\frac{1}{2} \ell_{x^{m_2}}).$ This implies $\cosh(\frac{1}{2} \ell_{{y}})-\cosh(\frac{1}{2} \ell_{{z}})=0,$ and so,    $\ell_y=\ell_z$.  Hence, $\cos(\phi_P)=\cos(\phi_Q),$ which implies the equality of the corresponding angles, as desired.
\end{proof}

\begin{lemma}\label{lem:smaller angle}
Let $X$ in $\T$ and let $x,y$ and $z$ be three  $X$-geodesics in $\Sigma$ such that $x$ is simple,  $\ell_y(Y)=\ell_z(Y)$ for all $Y \in \T$ and there exist an $(x,y)$-intersection point $P$ and a $(x,z)$-intersection point $Q$ such that for some positive integer $m$,  $\conju{(x^m*_P y)}_0=\conju{(x^m*_Q z)}_0$. Then 
either       $y=z$   or there exist an $(x,y)$-intersection point $R$ such that  $\phi_R>\phi_P.$
\end{lemma}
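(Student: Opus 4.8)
The plan is first to translate the homotopy hypothesis into an identity of angles, and then to use the universal cover together with hyperbolic trigonometry (and the earthquake monotonicity of Lemma~\ref{lem:decreasing}) to produce the point $R$.

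\medskip

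Write $g$ for the closed geodesic representing the common free homotopy class $\conju{(x^m*_P y)}_0 = \conju{(x^m*_Q z)}_0$, so that $\ell_g(Y)=\ell_{(x^m*_Py)_0}(Y)=\ell_{(x^m*_Qz)_0}(Y)$ for every $Y\in\T$. Applying Lemma~\ref{lem:cosh} to each of the two presentations of $g$ (with $x^m$ playing the role of one of the geodesics, as permitted by Remark~\ref{rem:powers}) and subtracting, the hypothesis $\ell_y(Y)=\ell_z(Y)$ allows me to cancel the positive common factors $\sinh(\ell_{x^m}(Y)/2)$ and $\sinh(\ell_y(Y)/2)$ and to conclude $\cos\phi_P(Y)=\cos\phi_Q(Y)$, hence $\phi_P(Y)=\phi_Q(Y)$ for all $Y\in\T$. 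This is the easy part. If $y=z$ there is nothing more to prove, so from now on I assume $y\neq z$ and hunt for the intersection point $R$.

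\medskip

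To produce $R$ I compare two surgery presentations of $g$ in the universal cover $\H$. Fix a lift $\tilde x$ of the axis of $x$ and a lift $\tilde P$ of $P$ on it. From the $(y,P)$-presentation, $g$ has a bi-infinite piecewise-geodesic representative built of segments of length $m\ell_x$ running along lifts of $x$, alternating with segments of lifts of $y$ (as in Figure~\ref{fig:lifts}); the $(z,Q)$-presentation gives another such representative, with lifts of $z$ in place of lifts of $y$. Since both represent the single geodesic $g$, after composing one of them with a deck transformation I may assume the two representatives run along the very same bi-infinite string of lifts of $x$; aligning them along $\tilde x$, the $z$-representative enters $\tilde x$ along a lift of $z$ at a point $\tilde Q'\in\tilde x$ at bounded distance from $\tilde P$. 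Because $\phi_P\equiv\phi_Q$ and $\ell_y\equiv\ell_z$, if $\tilde Q'$ coincided with $\tilde P$ the two entering geodesics would be lifts of geodesics through the same point, making the same angle with $\tilde x$ on the same side and having the same length function, which would force $z=y$; so $\tilde Q'\ne\tilde P$. Tracing the geodesic $y$ forward from $P$, this displacement forces $y$ to cross $x$ again before the $(y,P)$- and $(z,Q)$-representatives can rejoin, i.e. it singles out an $(x,y)$-intersection point $R$. A hyperbolic-trigonometry computation in the polygon cut out by the relevant sub-arcs of $x$, $y$ and $g$ — equivalently, the strict monotonicity of $\phi_P$ and of $\phi_R$ along the earthquake path $\mathcal{E}_x(t)$ (Lemma~\ref{lem:decreasing}) combined with Lemma~\ref{lem:cosh} applied at $R$ — then yields $\phi_R(X)>\phi_P(X)$, as required.

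\medskip

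The substance of the argument, and the step I expect to be the main obstacle, is entirely the previous paragraph: making precise which string of lifts of $x$ the surgered geodesic runs along, handling the case in which $g$ fails to be primitive (by passing to its primitive root and keeping track of the centralizer), extracting the intersection point $R$ from the displacement $\tilde Q'\ne\tilde P$, and then carrying out the angle comparison cleanly. I also expect the case $i(\conju x,\conju y)=1$ — in which $y$ has no intersection point with $x$ other than $P$, so that no such $R$ can exist — to need a short separate argument showing that then the un-surgering of $g$ along $x$ is unambiguous and $y=z$ is forced.
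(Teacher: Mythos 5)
Your first paragraph is fine: applying Lemma~\ref{lem:cosh} to the two presentations of the common geodesic $g$ and using $\ell_y(Y)=\ell_z(Y)$ gives $\cos\phi_P(Y)=\cos\phi_Q(Y)$, hence $\phi_P(Y)=\phi_Q(Y)$ for all $Y\in\T$. But the heart of the lemma --- actually producing $R$ and proving the \emph{strict} inequality $\phi_R>\phi_P$ --- is exactly the part you defer, and the two devices you propose to close it with do not work. First, the normalization ``after composing with a deck transformation the two representatives run along the very same bi-infinite string of lifts of $x$'' is unjustified: the canonical normalization is that the two piecewise-geodesic lifts $C$ and $D$ share the same endpoints at infinity, namely those of the axis $L$ of $g$; whether their $x$-segments then lie on the same complete lifts of $x$, or on offset ones, is precisely the case distinction that has to be analyzed, not something you may assume. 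Second, the claim that $\phi_R(X)>\phi_P(X)$ follows ``equivalently'' from Lemma~\ref{lem:decreasing} is wrong in kind: that lemma compares the \emph{same} angle at \emph{different} metrics along the twist path, whereas here one must compare two \emph{different} angles at the \emph{same} metric $X$. No earthquake argument by itself yields that comparison.

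The missing idea is the structure of the two zigzags relative to $L$. The axis $L$ of $g$ bisects every lap of each zigzag (this is the content of the proof of Theorem~\ref{theo:beardon}), so each zigzag decomposes into triangles whose vertices are the midpoints of two consecutive laps and the corner between them. By Lemma~\ref{lem:cosh} and $\ell_y=\ell_z$, the triangles of $C$ and of $D$ have pairwise equal side lengths, hence are congruent, so $D$ is carried to $C$ by an isometry preserving $L$. If that isometry is a reflection, then $\phi_P(Y)+\phi_Q(Y)=\pi$ for all $Y$ near $X$, which Lemma~\ref{lem:decreasing} forbids since twisting along the simple curve $x$ strictly decreases both angles. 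In the remaining translation case with $C\ne D$, the offset forces a segment of one zigzag to cut through a lap-triangle of the other, and comparing the resulting sub-triangle with the original --- two angles equal and strictly smaller area, hence strictly larger third angle by the hyperbolic angle-sum/area relation, or directly $\phi_P+\alpha+\beta<\pi=\phi_R+\alpha+\beta$ --- is what simultaneously produces the $(x,y)$-intersection point $R$ and the inequality $\phi_R>\phi_P$. None of this appears in your write-up. Your separate worry about $i(\conju{x},\conju{y})=1$ also dissolves once this is done: if there is no offset then $C=D$ and $y=z$; if there is an offset the crossing and the point $R$ are produced unconditionally.
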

\begin{proof}
We prove the result for $m=1$. 
Combining Remark~\ref{rem:powers} with the equality  $\ell_{x^m}=m\ell_x$ the proof for $m>1$ follows by a similar argument.

Since  $\conju{(x*_P y)_0}=\conju{(x*_Q z)_0}$,  there exists two lifts to the universal cover of the surface, the hyperbolic plane $\H$, one of  the piecewise geodesics  $(x*_P y)_0$  and the other of  $(x*_Q z)_0$ with the same endpoints. Denote these two lifts by $C$ and $D$ respectively, and by $L$ the geodesic line joining their common endpoints. 
 The two piecewise geodesics $C$ and $D$   zigzag about the line $L$. The zigzag curve $C$ (resp. $D$)  is composed  of alternating segments of lifts of $x$ of length $\ell_x$,  and lifts of $y$ (resp. of $z$) of length $\ell_y$.  
 In  Figure~\ref{fig:zigzag},  ``laps'' of lifts of $x$ are represented in blue, ``laps'' of  lifts of $y$ in green and ``laps'' of lifts of $z$ in red. The line $L$ intersects each of these lap segments in their midpoints. (See Theorem~\ref{theo:beardon} and its proof).
 
 Consider a  segment $S$ of the zigzag curve $C$, which is a  lift of $x$. 
 Denote the intersection point of $S$ and $L$ by $U$. 
 Choose an endpoint of $S$ and denote it by $P_1$. 
 Denote by $V$ the intersection of $L$ with the other  segment of $C$ with endpoint $P_1$ (this last segment is a lift of $y$).
 These three points determine a triangle $UVP_1$
 
 Consider the triangle $U'V'Q_1$, analogous to $UVP_1$, but with sides included in the zigzag curve $D$. 
 
 Note that the length of both segments,  $UP_1$ and  $UQ_1$ is $\ell_x/2$. Also the length of $VP_1$ and $V'Q_1$ is $\ell_y/2$. By Lemma~\ref{lem:cosh}, the length of $UV$ and $U'V'$ is half the length of the geodesic in $\conju{(x*_P y)_0}$. Therefore, these two triangles $UVP_1$ and $U'V'Q_1$ are congruent. 
 Hence,  there is an isometry mapping one triangle to the other. If this isometry is orientation reversing, then it maps 
 $U$  to $U$, $V$ to $V'$ and $P_1$ to $Q_1$. This implies that $\phi_P(X)+\phi_Q(X)=\pi$, see Figure~\ref{fig:zigzag}, a. 
 
 If we perturb the metric $X$ slightly, we can repeat the above argument, and obtain that  $\phi_P(Y)+\phi_Q(Y)=\pi$, for all $Y$ in a neighborhood of $X$. (The orientation reversing isometry for the corresponding $Y$-geodesics must exist by continuity). Since $x$ is simple, this is not possible by  Lemma \ref{lem:decreasing}.
  Thus, there is an orientation preserving isometry mapping  $U$  to $U$, $V$ to $V'$ and $P_1$ to $Q_1$. Now, there are two possibilities: either the midpoint of a lap of a lift of $x$ is also a midpoint of lap of a lift or $y$ (Figure~\ref{fig:zigzag}, right) or not (Figure~\ref{fig:zigzag}, middle.)
 
 \begin{figure}[ht]
  \centering
    \includegraphics[trim =80mm 35mm 40mm 5mm, clip, width=14cm]{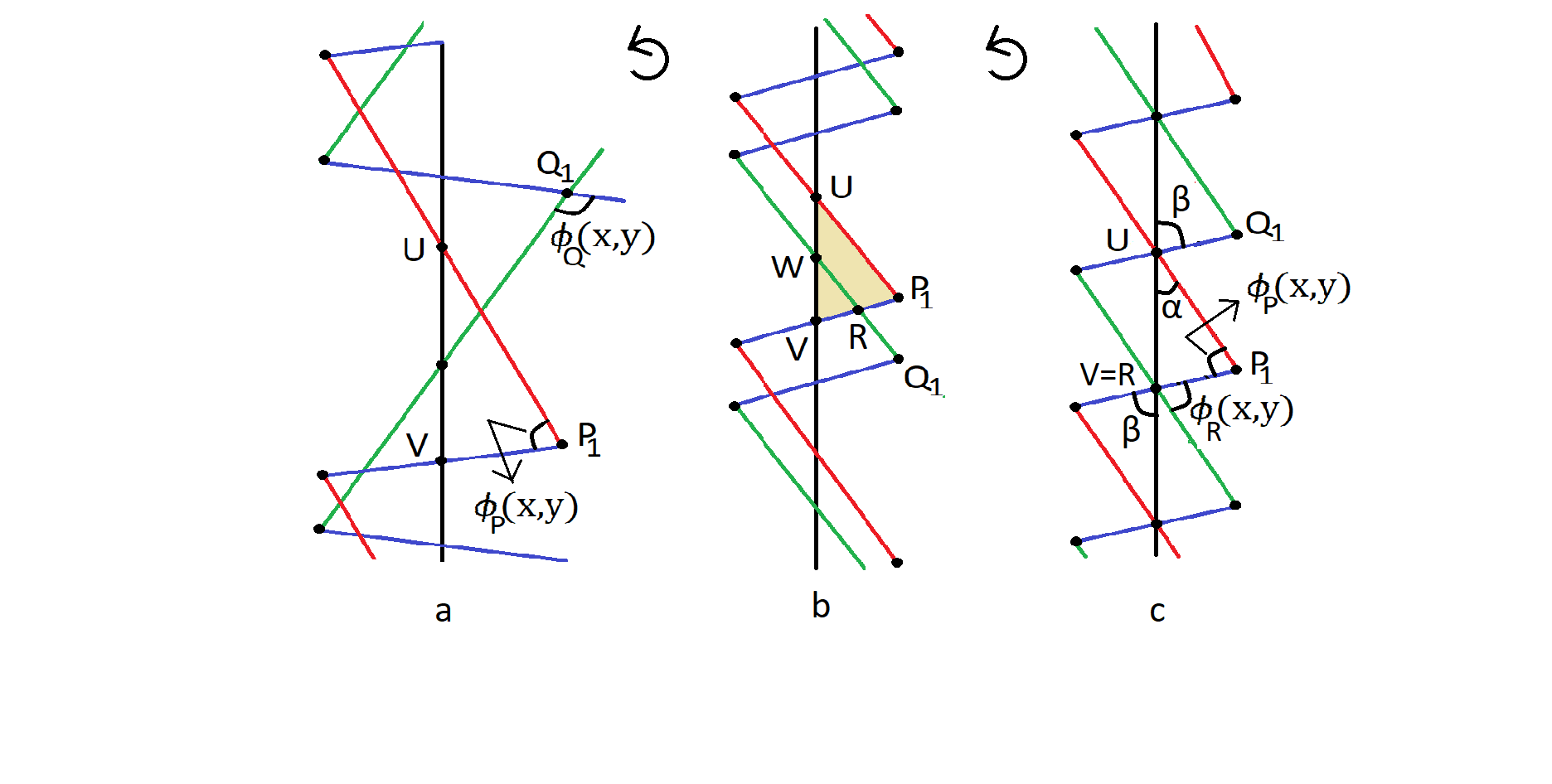}
     \caption{Zigzags}\label{fig:zigzag}
\end{figure}

If $C=D$, then $y=z$ and the proof is complete. Hence, we can assume $C\ne D$. There are then two  cases left, depicted in  Figure~\ref{fig:zigzag}, b. and c. 
In the case illustrated in Figure~\ref{fig:zigzag}, b., a segment lifting of $z$ intersects the interior of the triangle $UVP_1$, and determines a triangle $WVR$ as in the figure. Since the area of $WVR$  is smaller than that of $UVP_1$, and two of the angles of of $WVR$  are congruent to two of the angles of   $UVP_1$, the angle at $P_1$, is smaller than the angle at $R$, so the proof of this case is complete.

In the case illustrated in Figure~\ref{fig:zigzag},  c., $\phi_P+\alpha+\beta < \pi  = \phi_R+\alpha + \beta$, and $\phi_P < \phi_R$, as desired.
\end{proof}

\begin{proposition}\label{prop:noncapower}
	Let $X\in\T$ be a metric on $\Sigma$ and $\a,y,z$ be three pairwise distinct $X$-geodesics such that $\a$ is simple and let $P$ and $Q$ be  $(x,y)$ and $(x,z)$-intersection points respectively. The following holds.
	\begin{enumerate}
		\item  The equality  $\conju{(\a^m*_P y)}_0=\conju{(\a^m*_Q z)}_\infty$ holds for at most one positive value of $m$.
		\item  Either the equality $\conju{(\a^m*_P y)}_0=\conju{(\a^m*_Q z)}_0$ (resp. $\conju{(\a^m*_P y)}_\infty=\conju{(\a^m*_Q z)}_\infty$) holds for at most one positive value of $m$ or   there exist an $(x,y)$-intersection point $R$ such that $\phi_P(X) < \phi_R(X).$
	\end{enumerate}
	\end{proposition}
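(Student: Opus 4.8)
The plan is to combine Proposition~\ref{prop:values of m} (which says that if one of the displayed equalities holds for \emph{two} distinct values of $m$ then $\ell_y=\ell_z$ as functions on $\T$, together with an angle relation) with Lemmas~\ref{lem:zero-infinty} and~\ref{lem:smaller angle}. For part (1): suppose toward a contradiction that $\conju{(\a^m*_P y)}_0=\conju{(\a^m*_Q z)}_\infty$ for two distinct positive values of $m$. By Proposition~\ref{prop:values of m}(2) this forces $\ell_y(Y)=\ell_z(Y)$ for all $Y\in\T$ and $\phi_P(Y)+\phi_Q(Y)=\pi$ for all $Y\in\T$. But $\a$ is simple, so by Lemma~\ref{lem:decreasing} twisting $X$ along $\a$ makes $\phi_P(\mathcal E_\a(t))$ strictly decreasing; the same holds for $\phi_Q$ (using that $Q$ is an $(\a,z)$-intersection point and $\a$ is simple). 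Two strictly decreasing functions cannot sum to the constant $\pi$, a contradiction. Hence the equality in (1) holds for at most one positive $m$.

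For part (2): suppose $\conju{(\a^m*_P y)}_0=\conju{(\a^m*_Q z)}_0$ holds for two distinct positive values of $m$ (the $\infty$-case is identical with the roles of the smoothings swapped, or by noting the $\infty$-smoothing of $\a^m$ and $y$ equals the $0$-smoothing of $\a^m$ and $\bar y$, whichever phrasing is cleaner). Then Proposition~\ref{prop:values of m}(1) gives $\ell_y(Y)=\ell_z(Y)$ for all $Y\in\T$ and $\phi_P(Y)=\phi_Q(Y)$ for all $Y$. Now the hypotheses of Lemma~\ref{lem:smaller angle} are met (we have $\a$ simple, $\ell_y=\ell_z$ on $\T$, and the $0$-smoothing equality for some positive $m$), so the lemma's dichotomy applies: either $y=z$ or there is an $(\a,y)$-intersection point $R$ with $\phi_R>\phi_P$. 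The first alternative is excluded since $\a,y,z$ are assumed pairwise distinct geodesics. Therefore $\phi_P(X)<\phi_R(X)$ for some $(\a,y)$-intersection point $R$, which is exactly the second alternative in the statement. This establishes (2).

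The only subtlety worth spelling out is that Lemma~\ref{lem:smaller angle} is stated with a fixed $m$ appearing in the equality $\conju{(\a^m*_P y)}_0=\conju{(\a^m*_Q z)}_0$, whereas here we get that equality for two values of $m$; but having it for one value is all Lemma~\ref{lem:smaller angle} needs, so we simply feed in either of the two values. Conversely, the extra information ``two values of $m$'' is precisely what we need to invoke Proposition~\ref{prop:values of m} and thereby secure the hypothesis $\ell_y=\ell_z$ required by Lemma~\ref{lem:smaller angle}; without it we would not know the lengths agree. So the logical flow is: two values of $m$ $\Rightarrow$ (Prop.~\ref{prop:values of m}) lengths equal and angles equal/supplementary $\Rightarrow$ (Lemma~\ref{lem:decreasing} in the $\infty$-case, Lemma~\ref{lem:smaller angle} in the $0$-case) the desired conclusion.

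I do not anticipate a serious obstacle here, since the proposition is essentially an assembly of the preceding lemmas; the main thing to be careful about is bookkeeping of which smoothing ($0$ versus $\infty$) pairs with which, and making sure the ``twist strictly decreases the angle'' argument is applied at the point $Q$ as well as $P$ — this is legitimate because Lemma~\ref{lem:decreasing} only requires $\a$ (the curve being twisted) to be simple, not $y$ or $z$. If anything needs care it is the phrase ``resp.'' in part (2): one should confirm that the $\infty$-version of Lemma~\ref{lem:smaller angle} holds, either by an explicit parallel argument with the zigzag picture mirrored, or by reducing the $\infty$-smoothing to a $0$-smoothing after reversing one orientation.
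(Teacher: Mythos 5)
Your proposal is correct and follows essentially the same route as the paper: Proposition~\ref{prop:values of m} supplies the length and angle relations, Lemma~\ref{lem:decreasing} rules out the supplementary-angle case in (1), and Lemma~\ref{lem:smaller angle} (with $y\ne z$ excluded by the pairwise-distinctness hypothesis) yields the larger angle $\phi_R$ in (2). Your explicit handling of the $y=z$ alternative and of the ``resp.'' $\infty$-case is a slightly more careful write-up of what the paper leaves as ``similar,'' but the argument is the same.
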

\begin{proof}
Suppose $\conju{(\a^m*_P y)}_0=\conju{(\a^m*_Q z)}_\infty$ for two distinct values of  $m$. By Proposition \ref{prop:values of m}(2),  $\phi_P(Y)+\phi_Q(Y)=\pi,$ for all $Y \in \T.$ By Lemma \ref{lem:decreasing} this is not possible.  Thus, (1) is proved.

If $\conju{(\a^m*_P y)}_0=\conju{(\a^m*_Q z)}_0,$  for more than two values of $m$ by Proposition~\ref{prop:values of m}(1), we have that  $\ell_{y}(Y)=\ell_{z}(Y)$ and $\phi_P(Y)=\phi_Q(Y)$    for all $Y\in\T$.

Fix  $X\in\T$, any $m$ and consider a geodesic lift $A$ of the geodesic in the free homotopy class of $\conju{(\a^m*_P y)}_0=\conju{(\a^m*_Q z)}_0$. 
    
The result follows them by Lemma~\ref{lem:smaller angle}.

The proof for the case $\conju{(\a^m*_P y)}_\infty=\conju{(\a^m*_Q z)}_\infty$ is similar. Hence, result is proved.
\end{proof}

\begin{lemma}\label{lem:nonca21} 
Let  $\conju{\a}, \conju{\b_1}, \dots, \conju{\b_k}$ be pairwise distinct free homotopy classes of  closed curves such that $\conju{\a}$ contains a  simple representative. Let $\conju{\b}=\sum_{i=1}^k c_i\conju{\b_i}$  where the coefficients $c_1, c_2,\dots, c_k$ are in the ring $\K$. 
Then either  $i(\a,\b_i)= 0$ for all $i\in\{1,\ldots,k\}$ or  there exists a positive integer $m_0$ such that $[\conju{\a}^m,\conju{\b}]\ne 0$ for all $m \ge m_0$ .  
\end{lemma}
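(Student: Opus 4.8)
The plan is to fix a hyperbolic metric, throw away the summands of $\conju{\b}$ that contribute nothing to the bracket, and then isolate one term of $[\conju{\a}^m,\conju{\b}]$ --- the ``$0$-smoothing'' coming from an $(\a,\b_i)$-intersection point of maximal angle --- that stays uncancelled once $m$ is large. First the reduction: if $i(\a,\b_i)=0$ for every $i$ the first alternative holds, so assume not. Since $[\conju{\a}^m,\conju{\b_i}]=0$ whenever $i(\a,\b_i)=0$, the bracket $[\conju{\a}^m,\conju{\b}]$ equals $[\conju{\a}^m,\sum_{i\in I}c_i\conju{\b_i}]$ with $I=\{i:c_i\ne0,\ i(\a,\b_i)>0\}$; relabeling, we may assume $I=\{1,\dots,k\}$, so $k\ge1$, all $c_i\ne0$, all $i(\a,\b_i)\ge1$, the classes $\conju{\a},\conju{\b_1},\dots,\conju{\b_k}$ are still pairwise distinct, and $\conju{\a}$ is non-trivial, hence has a simple representative. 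Fix $X\in\T$ and let $\a,\b_1,\dots,\b_k$ also denote the $X$-geodesics in these classes, with $\a$ simple. By Remark~\ref{rem:powers}, for each $m\ge1$ the geodesics $\a^m$ and $\b_i$ meet exactly in the finitely many (and at least one) $(\a,\b_i)$-intersection points, with the same angles, and
$$[\conju{\a}^m,\conju{\b}]=\sum_{i=1}^{k}c_i\sum_{P\in\a\cap\b_i}\Bigl(\conju{(\a^m*_P\b_i)}_0-\conju{(\a^m*_P\b_i)}_\infty\Bigr).$$
Fix an $(\a,\b_{i_0})$-intersection point $R$ with $\phi_R(X)$ maximal among all intersection points of $\a$ with $\b_1,\dots,\b_k$, and put $\gamma_m:=\conju{(\a^m*_R\b_{i_0})}_0$, which occurs in the display above with coefficient $c_{i_0}$.

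Then I would prove: \emph{there is $m_0$ such that for every $m\ge m_0$ the only triple $(Q,j,\delta)$ with $Q\in\a\cap\b_j$, $\delta\in\{0,\infty\}$ and $\conju{(\a^m*_Q\b_j)}_\delta=\gamma_m$ is $(R,i_0,0)$.} Since there are finitely many such triples, it suffices to see that every triple other than $(R,i_0,0)$ realizes the equality for only finitely many $m$. For $\delta=\infty$, the equality $\conju{(\a^m*_R\b_{i_0})}_0=\conju{(\a^m*_Q\b_j)}_\infty$ holds for at most one $m$: this is Proposition~\ref{prop:noncapower}(1) when $j\ne i_0$, and its proof uses only Lemma~\ref{lem:cosh} and Lemma~\ref{lem:decreasing} (the equality would force $\phi_R(Y)+\phi_Q(Y)=\pi$ for all $Y\in\T$, impossible after twisting $X$ along the simple curve $\a$), so it goes through unchanged when $j=i_0$. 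For $\delta=0$ and $j\ne i_0$, Proposition~\ref{prop:noncapower}(2) applied to the pairwise distinct geodesics $\a,\b_{i_0},\b_j$ gives that either the equality holds for at most one $m$ or there is an $(\a,\b_{i_0})$-intersection point $R'$ with $\phi_R(X)<\phi_{R'}(X)$; the latter is excluded by the choice of $R$, so the former holds. For $\delta=0$, $j=i_0$ and $Q\ne R$, the equality $\conju{(\a^m*_R\b_{i_0})}_0=\conju{(\a^m*_Q\b_{i_0})}_0$ holds for no $m$: by the argument in the proof of Lemma~\ref{lem:smaller angle} (applied with $y=z=\b_{i_0}$), such an equality forces either the two lifted zigzags to coincide --- impossible, since their vertices project to $R$ and to $Q$ respectively while $R\ne Q$ --- or the existence of an $(\a,\b_{i_0})$-intersection point $R'$ with $\phi_R(X)<\phi_{R'}(X)$, again contradicting the choice of $R$.

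Granting the claim, for $m\ge m_0$ the basis element $\gamma_m$ occurs exactly once in the displayed expansion of $[\conju{\a}^m,\conju{\b}]$, so the coefficient of $\gamma_m$ in $[\conju{\a}^m,\conju{\b}]$, written in the basis $\conju{\pi}$, equals $c_{i_0}\ne0$; hence $[\conju{\a}^m,\conju{\b}]\ne0$, which is the second alternative. I expect the delicate point to be the last case of the claim ($\delta=0$, $j=i_0$, $Q\ne R$): it is not a direct consequence of Proposition~\ref{prop:noncapower}, whose hypotheses require the three geodesics to be pairwise distinct, so one has to reopen the proof of Lemma~\ref{lem:smaller angle} and observe that its ``$y=z$'' alternative is exactly the case in which the two lifted zigzags coincide --- which cannot happen when the base points $R$ and $Q$ are distinct, since a vertex common to the two zigzags would project to both $R$ and $Q$. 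The only other point requiring a little care is the reduction step, where one checks that discarding the vanishing summands affects neither the bracket nor the remaining hypotheses (pairwise distinctness of the classes and non-triviality of $\conju{\a}$).
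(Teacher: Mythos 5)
Your argument is correct and is essentially the paper's own proof: fix a metric, expand $[\conju{\a}^m,\conju{\b}]$ over the $(\a,\b_i)$-intersection points, select a point of maximal angle, and use Proposition~\ref{prop:values of m}/Proposition~\ref{prop:noncapower} together with Lemma~\ref{lem:smaller angle} to show the $0$-term at that point cannot coincide with any other term for more than finitely many $m$, so it survives for all large $m$. Your explicit treatment of potential partners with $j=i_0$ (where Proposition~\ref{prop:noncapower}'s pairwise-distinctness hypothesis does not literally apply, so one reopens its proof and that of Lemma~\ref{lem:smaller angle}, noting that $C=D$ would force $R=Q$) makes precise a step the paper leaves implicit; the only slip is that the surviving coefficient is $m\,c_{i_0}$ rather than $c_{i_0}$, since the bracket with $\conju{\a}^m$ carries a factor of $m$ per intersection point, which is harmless here (the paper's own argument tacitly uses $m\,c_{i_0}\neq 0$ in $\K$ as well).
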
	
\begin{proof}
From the definition of the  bracket, for any $m\in \N$, 
\begin{align*}
   [\conju{\a}^m,\conju{\b}] &=\sum_{i=1}^kc_i[\conju{\a}^m,\conju{\b}_i]\\
   & = m\sum_{i=1}^kc_i\sum\limits_{P \in \a \cap y_i } \conju{(\a^m*_P y_i)}_0-\conju{(\a^m*_P {\b_i})}_\infty.
 \end{align*}
Fix a metric $X$ and assume that $\a$, $\b_1 \dots, \b_k$ are $X$-geodesics.
 For each $m$,  the sum $\sum_{i=1}^kc_i\sum\limits_{P \in \a \cap y_i } \conju{(\a^m*_P y_i)}_0-\conju{(\a^m*_P {\b_i})}_\infty$ has $I=2(i(\conju{\a}, \conju{\b_1})+i(\conju{\a}, \conju{\b_2})+\cdots+i(\conju{\a}, \conju{\b_k}))$ terms, before performing possible cancellations.  
 
 Choose a metric $X$ in $\T$. Consider $P$, one of the intersection points of the $X$-geodesic in $\a$ and $\bigcup\limits_{1 \le i \le k} y_i$ and choose one of the terms of the bracket associated with $P$, $0$ or $\infty$. For simplicity, assume that the chosen term is $(\conju{\a *_P \b_1})_0$. 
 
 If for all $1 \le m \le I+1$, we have that  $[\conju{\a}^m,\conju{\b}]=0$ then there exist  $m_1, m_2 \in \{1,2,\dots, I+1\}$, and an $X$-geodesic $\b_i$, such that one of the following holds: 
 
 \begin{enumerate}
    \item[(a).]  $(\conju{\a^m *_P \b_1})_0=(\conju{\a^m *_Q \b_i})_0$ for $m=m_1$ and $m=m_2$.
  \item[(b).]  $(\conju{\a^m *_P \b_1})_0=(\conju{\a^m *_Q \b_i})_\infty$ for $m=m_1$ and $m=m_2$.
 \end{enumerate}

 By Proposition~\ref{prop:noncapower}(1), (a) is not possible. Hence (b) holds and by Proposition~\ref{prop:noncapower}(2), there exists a point $R$ in $\a \cap \b_1$, such that  the   angle $\phi$ at $P$ is strictly smaller than the   angle $\phi$ at $R$  (in symbols, $\phi_P  < \phi_R$). This is not possible because $P$ was chosen arbitrarily, and so the proof is complete.
\end{proof}

We will make use of the following well known result. (This result is usually stated for finite type surfaces but it can be generalized to all Riemann surfaces using the fact that every closed geodesic is included in a compact subsurface). 
\begin{lemma}\label{lem:classical}
If $\Sigma$ is an orientable surface 
and  $y$ is a closed curve on $\Sigma$ such that $i(x,y)=0$ for every simple closed curve $\a$. Then $y$ is either homotopically trivial or homotopic to a boundary curve or homotopic to a puncture. 	
\end{lemma}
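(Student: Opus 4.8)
The plan is to prove the contrapositive: if $y$ is \emph{essential} (not null-homotopic) and \emph{non-peripheral} (not freely homotopic to a boundary component or to a loop around a puncture), then there is a simple closed curve $x$ with $i(x,y)>0$.

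\textbf{Step 1: reduction to finite type, and the case where the geodesic is simple.} Fix a complete hyperbolic metric on $\Sigma$ and let $\gamma$ be the closed geodesic in the class of $y$. Since $\gamma$ is compact it lies in a compact $\pi_1$-injective subsurface $\Sigma_0$ of $\Sigma$, which I would enlarge so that every component of $\Sigma\setminus\Sigma_0$ is either an annulus running to a boundary component or puncture of $\Sigma$, or a subsurface of negative Euler characteristic. With this choice $y$ remains essential and non-peripheral in $\Sigma_0$, and any simple closed curve of $\Sigma_0$ meeting $\gamma$ transversally and essentially is a simple closed curve of $\Sigma$ of the required kind; so I may assume $\Sigma$ has finite type. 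If $\gamma$ is itself simple, then $\gamma$ is an essential non-peripheral simple closed curve, and the change of coordinates principle produces a simple closed curve $x$ with $i(x,\gamma)=1$ when $\gamma$ is non-separating, and with $i(x,\gamma)=2$ when $\gamma$ is separating (take an essential properly embedded arc with both endpoints on $\gamma$ in each complementary piece, which exists since neither piece is a disk or a boundary/puncture annulus, and band-sum them across $\gamma$).

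\textbf{Step 2: the non-simple case via the subsurface filled by $\gamma$.} Suppose $\gamma$ is not simple and let $F$ be the subsurface it fills, so that each component of $F\setminus\gamma$ is a disk or an annulus parallel to $\partial F$; since $\gamma$ self-intersects, $F$ is neither a disk nor an annulus. If $F$ carries a simple closed curve $c$ that is essential and not parallel to $\partial F$, then $c$ cannot be homotoped off $\gamma$ inside $F$, so $i(c,\gamma)>0$; as $F$ is incompressible, $c$ is essential in $\Sigma$ and we take $x=c$. Otherwise $F$ is a pair of pants. If some cuff $\partial_0$ of $F$ bounds a complementary subsurface of negative Euler characteristic (or is identified in $\Sigma$ with another cuff of $F$), I would glue an essential properly embedded arc of that complementary piece (respectively, an essential arc joining the two identified cuffs) to an essential properly embedded arc of $F$ separating the other two cuffs, obtaining a simple closed curve $x$; since the $F$-arc is essential in the surface $F$ filled by $\gamma$ it must cross $\gamma$, so $i(x,y)>0$. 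The only remaining possibility is that all three cuffs of $F$ are trivial or peripheral in $\Sigma$, which forces $\Sigma$ to be a sphere with at most three holes or punctures; on such a surface the statement must be treated by hand, and it is in all other cases that the argument above applies.

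\textbf{Main obstacle.} The conceptual heart is the filling-subsurface dichotomy of Step 2. The genuinely delicate points are the finite-type reduction of Step 1 (choosing the compact subsurface so that peripherality in it coincides with peripherality in $\Sigma$) and, in the pair-of-pants subcase, actually exhibiting the crossing curve from properly embedded essential arcs and checking that one of them necessarily meets $\gamma$; these are routine but demand careful bookkeeping with peripheral curves and complementary regions, which is why the lemma is quoted here rather than proved in full.
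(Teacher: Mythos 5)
The paper offers no proof of this lemma to compare against: it is quoted as a well-known result, with only the parenthetical remark that the finite-type case extends to arbitrary complete hyperbolic surfaces because every closed geodesic lies in a compact subsurface. Your Step 1 is that remark made explicit, and your Step 2 is the standard filling-subsurface argument, so in spirit your sketch supplies what the paper merely cites. Most of it is sound modulo the bookkeeping you yourself flag: in Step 1 the enlargement must in addition ensure that a curve peripheral in $\Sigma_0$ is peripheral in $\Sigma$ (your stated condition on the complementary components does not by itself prevent $y$ from being parallel to a cuff of $\Sigma_0$ that abuts a negative-characteristic complementary piece, though this is easily repaired by enlarging further into that piece), and in Step 2 the crossing of the band-summed curve with $\gamma$ must be shown essential, e.g.\ by passing to geodesic representatives or a bigon argument.

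The genuine gap is the terminal case you propose to ``treat by hand'': it cannot be treated, because the statement is false there. If $\Sigma$ is the pair of pants (equivalently the thrice-punctured sphere), every essential simple closed curve is boundary-parallel, so every closed curve $y$ satisfies $i(x,y)=0$ for \emph{all} simple closed curves $x$ (realize $y$ by its geodesic in the interior and push boundary-parallel curves into collars of the ends); yet the figure-eight class $\conju{aB}$ of Example~\ref{ex:non-simple} is neither null-homotopic nor homotopic to a boundary component or puncture (its image in homology is $(1,-1)$, which is no power of $(1,0)$, $(0,1)$ or $(1,1)$). Thus your argument, carried out fully, proves the lemma for every surface except the three-holed sphere --- in the remaining low-complexity cases (disk, annulus, sphere) the conclusion holds trivially --- and simultaneously shows that the lemma as stated requires excluding the pair of pants (or a correspondingly weaker conclusion there, which also affects the paper's use of the lemma in the proof of the Center Theorem for that surface). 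You should state this exclusion explicitly rather than defer the case, since no by-hand argument can close it.
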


\subsection{Proof of the Center Theorem}\label{subsec:center}

Suppose that $\conju{y}=\sum_{i=1}^k c_i\conju{y_i}$ belongs to the center,  where the free homotopy classes of $\conju{y}_1, \conju{y}_2,\dots,\conju{y}_k$ are pairwise distinct and $c_i\in\K$ for $i\in\{1,2,\ldots,k\}$. Let $\a$ be any simple closed curve. By definition of center, $[\conju{x}^n,\conju{y}]=0$ for all positive integers $n$.  Therefore Lemma \ref{lem:nonca21} implies that $i(\a,\b_i)=0$ for all $i\in\{1,2,\ldots k\}$. Hence Lemma \ref{lem:classical} implies that each $\conju{\b}_i$ is either homotopically trivial or homotopic to a boundary curve or homotopic to a puncture, which completes the proof.

%\begin{remark}
%Let $\K\pi$ be the Goldman Lie algebra (see Section \ref{sec:Goldman}). We can use similar methods to compute the center of $\K\pi$ and to obtain another proof of \cite[Main Theorem]{kabiraj2016center}. We have to replace the (undirected) angle by directed angle which is the  angle at an intersection point between the positive direction of both geodesic.  These methods can also be extended to compute the centers of the universal enveloping algebra and symmetric algebras of both $\K\pi$ and $\K\conju{\pi}$. The details will appear in our future work. 
%\end{remark}	
 
 \section{Universal enveloping algebra and symmetric algebra of TWG-Lie algebra}\label{sec:uea}
  
  Let $\U(\K\conju{\pi})$ be the universal enveloping algebra and $\s(\K\conju{\pi})$ be the symmetric algebra of $K(\conju{\pi})$. 
  For definition and basic properties of these objects see \cite{abe2004hopf}, \cite{hoste1990homotopy}.  
  
  $\U(\K\conju{\pi})$  has a natural Poisson algebra structure with the commutator being the Lie bracket. We extend the Lie bracket of $\K\conju{\pi}$ to $\s(\K(\conju{\pi}))$ using Leibniz rule. This makes $\K(\conju{\pi})$ a Poisson algebra. 
  
  The Poisson center  of a Poisson algebra $(A,\{,\})$ is the subalgebra consists of elements $y\in A$ such that $\{x,y\}=0$ for all $x\in A$. In this section we discuss the Poisson center of the Poisson algebras  $\U(\K\conju{\pi})$ and $\s(\K\conju{\pi})$.  
  
  There are canonical maps from $\K\conju{\pi}$ to $\U(\K\conju{\pi})$ and $\s(\K\conju{\pi})$. To simplify notation, we denote an element and its image under these maps by the same notation. We also denote the product of two elements  in  $\U(\K\conju{\pi})$ and $\s(\K\conju{\pi})$ simply by juxtaposing the elements.

  Let us recall the Poincare-Birkhoff-Witt theorem for $\K\conju{\pi}$(see \cite{abe2004hopf}, \cite[Theorem 3.2]{hoste1990homotopy}).
  
  \begin{theorem}\label{thm:pbw}
  	Let $\leq $ be a fixed total order on $\conju{\pi}$. Consider the set $$S=\{\conju{x}_1\conju{x}_2\cdots \conju{x}_n: n>0, \conju{x}_i\in\conju{\pi}, i\in\{1,2,\ldots ,n\},  \conju{x}_1\leq \conju{x}_2\leq \cdots  \leq \conju{x}_n\}.$$ 
  	Both  $\U(\K\conju{\pi})$  and $\s(\K\conju{\pi})$ are freely generated by $S$ as a $\K$ module. Moreover the natural maps from $\conju{\pi}$ into  $\U(\K\conju{\pi})$  and $\s(\K\conju{\pi})$ are injective Lie algebra homomorphisms. 
  \end{theorem}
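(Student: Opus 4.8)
The plan is to reduce both assertions to the classical Poincar\'e--Birkhoff--Witt theorem over a commutative ground ring, the only substantive input being that $\K\conju{\pi}$ is a \emph{free} $\K$-module, with the set $\conju{\pi}$ of free homotopy classes of undirected curves as basis. First I would fix an arbitrary total order $\leq$ on $\conju{\pi}$ (for instance a well-order, which exists by the axiom of choice; in the cases of interest $\conju{\pi}$ is in fact countable), so that the set $S$ of nondecreasing monomials is well defined. Note that the TWG bracket makes $\K\conju{\pi}$ a genuine Lie algebra over $\K$, as established in Sections~\ref{sec:Goldman} and \ref{sec:jacobi}, so $\U(\K\conju{\pi})$ and $\s(\K\conju{\pi})$ are defined in the usual way.

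For the symmetric algebra the argument is essentially a tautology. As a commutative $\K$-algebra, $\s(\K\conju{\pi})$ is the polynomial algebra on the set $\conju{\pi}$, whose standard $\K$-module basis is precisely $S$. The Poisson bracket we put on $\s(\K\conju{\pi})$ is obtained by extending the TWG bracket from $\K\conju{\pi}$ via the Leibniz rule, so it does not alter the underlying $\K$-module structure, and by construction its restriction to the degree-one part is the TWG bracket; hence the canonical map $\K\conju{\pi}\to\s(\K\conju{\pi})$ is a Lie algebra homomorphism. Its injectivity is immediate, since it sends the $\K$-basis $\conju{\pi}$ into the free basis $S$, as the degree-one monomials.

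For the universal enveloping algebra I would invoke the form of PBW valid over an arbitrary commutative ring: if a Lie algebra $L$ is free as a module over the base ring, then $\U(L)$ is a free module on the nondecreasing monomials in any ordered basis of $L$. One can either cite this (Bourbaki, or \cite{abe2004hopf}) or prove it directly by Bergman's Diamond Lemma, taking as rewriting rule $\conju{x}_j\,\conju{x}_i \mapsto \conju{x}_i\,\conju{x}_j - [\conju{x}_i,\conju{x}_j]$ for $\conju{x}_j > \conju{x}_i$ (with $[\conju{x}_i,\conju{x}_j]$ expanded in the basis $\conju{\pi}$), and checking that the single overlap ambiguity, a strictly decreasing triple $\conju{x}_k\,\conju{x}_j\,\conju{x}_i$, is resolvable: both reduction orders yield the same normal form exactly because of the Jacobi identity for the TWG bracket. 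Applying this with $L=\K\conju{\pi}$ shows that $S$ is a free $\K$-basis of $\U(\K\conju{\pi})$. The canonical map $\K\conju{\pi}\to\U(\K\conju{\pi})$ is a Lie algebra homomorphism by the defining universal property, with the commutator as bracket on $\U(\K\conju{\pi})$, and it is injective because it carries $\conju{\pi}$ onto the degree-one elements of the free basis $S$.

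The only point requiring care is that the textbook proof of PBW over a field does not transfer verbatim to a general commutative ring $\K$; the Diamond Lemma (equivalently, Bourbaki's module-action argument) is precisely the device that makes the base change harmless, because the rewriting rule above introduces no denominators, and it works exactly when the Lie algebra is $\K$-free, which is our situation. I therefore expect this invocation to be the ``main obstacle,'' though it is entirely standard; the remaining verifications are purely formal.
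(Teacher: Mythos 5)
Your proposal is correct and follows essentially the same route as the paper, which simply recalls this statement as the Poincar\'e--Birkhoff--Witt theorem for Lie algebras that are free modules over a commutative ring, citing \cite{abe2004hopf} and \cite[Theorem 3.2]{hoste1990homotopy} rather than proving it. Your added Diamond Lemma sketch and the observation that $\K$-freeness of $\K\conju{\pi}$ (with basis $\conju{\pi}$) is the hypothesis that makes PBW work over an arbitrary commutative ring are accurate, just more detail than the paper supplies.
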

  \begin{theorem}\label{thm:univ}
  	The Poisson center of the Poisson algebras  $\U(\K\conju{\pi})$  and $\s(\K\conju{\pi})$ are generated by scalars $\K$, the free homotopy class of constant curve and the curves homotopic to boundary and punctures.
  \end{theorem}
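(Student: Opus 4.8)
The plan is to use the Poincaré--Birkhoff--Witt theorem (Theorem~\ref{thm:pbw}) to reduce the computation of the Poisson center of $\U(\K\conju{\pi})$ and $\s(\K\conju{\pi})$ to a statement about monomials in the curve basis, and then apply the angle/length machinery of Section~\ref{sec:center thm} (in particular Proposition~\ref{prop:noncapower} and the refined bracket computations in Lemma~\ref{lem:nonca21}) to force every factor appearing in a central monomial to lie in $\widetilde{\text{C}}$. Fix a total order $\leq$ on $\conju{\pi}$ and write an arbitrary element $z$ of $\U(\K\conju{\pi})$ (resp.\ $\s(\K\conju{\pi})$) in the PBW basis $S$. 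The key elementary fact is the derivation property: for a single basis curve $\conju{\a}$, the operator $[\conju{\a},-]$ acts on $\U$ (resp.\ $\s$) as a derivation with respect to the associative (resp.\ symmetric) product, by definition of the Poisson bracket. So $[\conju{\a},\conju{y_1}\cdots\conju{y_n}]=\sum_j \conju{y_1}\cdots[\conju{\a},\conju{y_j}]\cdots\conju{y_n}$, and iterating, $[\conju{\a}^m,\;\cdot\;]$ still acts as a derivation in the second slot while depending on $m$ only through the curve-level brackets $[\conju{\a}^m,\conju{y_j}]$.

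The core step is the following: take $z$ in the Poisson center, write it in the PBW basis, and let $W=\{\conju{w}_1,\dots,\conju{w}_r\}$ be the set of all curve classes occurring as factors in the monomials of $z$ that are \emph{not} in $\widetilde{\text{C}}$. Suppose $W$ is nonempty. Choose a simple closed curve $\a$ with $i(\a,\conju{w}_j)>0$ for some $j$; such an $\a$ exists by Lemma~\ref{lem:classical} applied to any $\conju{w}_j\notin\widetilde{\text{C}}$. Now expand $[\conju{\a}^m,z]=0$ in the PBW basis using the derivation property. The plan is to isolate the ``top'' monomials: order the monomials of $z$ by total degree and, within fixed degree, lexicographically using $\leq$; then pick a monomial $\conju{w}_{i_1}\cdots\conju{w}_{i_n}$ of maximal degree whose factors include a non-central curve, with that non-central factor chosen to be $\leq$-maximal. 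When we apply $[\conju{\a}^m,-]$, the terms hitting the non-central factor produce, via the curve-level bracket, a sum of the smoothings $\conju{(\a^m*_P \conju{w})}_0-\conju{(\a^m*_P \conju{w})}_\infty$, each multiplied by $m$ and by the surviving factors; by the same length-function argument as in Lemma~\ref{lem:nonca21}, varying $m$ over $1,\dots,I+1$ (where $I$ is a bound on the total number of smoothing terms produced) forces, via Proposition~\ref{prop:noncapower}, the existence of an $(x,y)$-intersection point $R$ with $\phi_R>\phi_P$ for an arbitrarily chosen $P$ --- a contradiction, provided these top terms cannot be cancelled by contributions from other monomials. The main obstacle is precisely controlling that last cancellation: a priori the smoothed monomials arising from the maximal monomial could collide in $S$ with smoothed monomials arising from other monomials of $z$, or with terms where $[\conju{\a}^m,-]$ hits a central factor.

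To handle this I would argue that such collisions cannot persist for infinitely many $m$. A smoothing $\conju{(\a^m*_P\conju{w})}_0$ is a single curve class whose geodesic length, by Lemma~\ref{lem:cosh}, grows like $m\ell_\a/2$ up to bounded error, and more precisely satisfies the $\coth$ relation exploited in Proposition~\ref{prop:values of m}; two such classes, or a class and a product of shorter classes, can agree for at most finitely many $m$ unless the underlying length data coincide identically, in which case Proposition~\ref{prop:values of m} and Lemma~\ref{lem:smaller angle} again give the angle contradiction. Concretely: for $m$ large, the monomial obtained by smoothing the maximal monomial at an intersection point with $\a$ has a factor (the smoothed curve) of length $\approx m\ell_\a/2 + O(1)$ that is strictly longer than every factor appearing in any monomial of $z$ of equal or lower total complexity once $m$ exceeds an explicit threshold; hence for large $m$ these terms are ``new'' and cannot be cancelled, so $[\conju{\a}^m,z]\neq 0$. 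This contradiction shows $W=\varnothing$, i.e.\ every factor of every monomial of $z$ lies in $\widetilde{\text{C}}$, so $z$ lies in the subalgebra generated by $\K$, the constant curve, and boundary/puncture curves. The reverse inclusion is immediate: each generator of $\widetilde{\text{C}}$ is central in $\K\conju{\pi}$ (it is in the center of the TWG Lie algebra by the Center Theorem), hence Poisson-central in $\U$ and $\s$ because $[\,\cdot\,,-]$ is a derivation and kills each generating factor; scalars are Poisson-central trivially. Combining the two inclusions and invoking the PBW theorem to guarantee that ``no cancellation among distinct PBW monomials'' is a well-defined notion (since $S$ is a genuine $\K$-basis) completes the proof. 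The hardest part, as noted, is the bookkeeping that isolates an uncancellable top term; everything else is either the derivation formalism or a direct transcription of Section~\ref{sec:center thm}.
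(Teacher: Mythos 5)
Your proposal is correct and follows essentially the same route as the paper's proof: the PBW basis plus the Leibniz/derivation property reduces Poisson-centrality to persistent (in $m$) collisions among smoothed PBW monomials, which are excluded by Proposition~\ref{prop:values of m}, Proposition~\ref{prop:noncapower} and Lemma~\ref{lem:smaller angle} for smoothed-versus-smoothed collisions and by the length growth of Lemma~\ref{lem:cosh} for a smoothed factor colliding with a fixed factor, after which Lemma~\ref{lem:classical} finishes, exactly as in Section~\ref{sec:uea}. The only differences are cosmetic bookkeeping (the paper fixes a factor and a maximal-angle intersection point and lists three impossible persistent collision types instead of your degree/lex-maximal monomial and large-$m$ threshold), plus an immaterial slip: the smoothed curve has length $\approx m\ell_{\a}+O(1)$, not $m\ell_{\a}/2$.
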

  
  \begin{proof}
  	We compute the center $\U(\K\conju{\pi})$. For $\s(\K\conju{\pi})$, the proof is exactly the same. Let $Z$ be an element of the center of $\U(\K\conju{\pi})$. Then by Theorem \ref{thm:pbw}, $$Z=\sum_{i=1}^n C_i\; \conju{x}_{i_1}\conju{x}_{i_2}\cdots \conju{x}_{i_k}$$ where $\conju{x}_{i_1}\leq \conju{x}_{i_2}\leq \cdots  \leq \conju{x}_{i_k}$ for all $i\in\{1,2,\ldots ,n\}$. For any $\conju{x}\in \K\conju{\pi}$ we have 
  	\begin{align*}
  	0=[\conju{x}^n,Z] &=\sum_{i=1}^nC_i\;[x^n,\conju{x}_{i_1}\conju{x}_{i_2}\cdots \conju{x}_{i_k}]\\
  	&=\sum_{i=1}^{n}C_i\;
  	\sum_{j=1}^{k}n
  	\Bigg\{
  	\sum_{p_{i_j}\in \conju{x}\cap\conju{x}_{i_j}}
  	\Big(
  	\conju{x}_{i_1}\cdots \conju{x}_{i_j-1}(\conju{x}^n*_{P_{i_j}}\conju{x}_{i_j})_0\conju{x}_{i_j+1}\cdots \conju{x}_{i_k}\\
  	& -\conju{x}_{i_1}\cdots \conju{x}_{i_j-1}(\conju{x}^n*_{P_{i_j}}\conju{x}_{i_j})_\infty\conju{x}_{i_j+1}\cdots \conju{x}_{i_k}
  	\Big)
  	\Bigg\}	
  	\end{align*}
  	Fix a metric $X\in\T$ and choose ${x}$ to be any simple $X$-geodesic. Let $x_{i_j}$ be the $X$-geodesic in the free homotopy class of $\conju{x}_{i_j}$. 
  	
  	Fix any $\conju{x}_{i_j}$ and consider $P\in{x}\cap {x}_{i_j}$ such that $\phi_P(x,x_{i_j})\geq \phi_R(x,x_{i_j})$ for all $R\in x\cap x_{i_j}$. As $[\conju{x}^n,Z]=0$ for all positive integer $n$, from the above expression and Theorem \ref{thm:pbw}, there exists $\conju{x}_{k_l}\neq \conju{x}_{i_j}$ such that for infinitely many positive integer $n$, one of the following is true.
  	\begin{itemize}
  		\item $(\conju{x}^n*_{P}\conju{x}_{i_j})_0=(\conju{x}^n*_{Q}\conju{x}_{k_l})_0$ for some $Q\in\conju{x}_{i_j}\cap\conju{x}_{k_l}$. This is impossible by Proposition \ref{prop:noncapower}.
  		\item $(\conju{x}^n*_{P}\conju{x}_{i_j})_0=(\conju{x}^n*_{Q}\conju{x}_{k_l})_\infty$ for some $Q\in\conju{x}_{i_j}\cap\conju{x}_{k_l}$. This is impossible by Proposition \ref{prop:noncapower}.
  		\item $(\conju{x}^n*_{P}\conju{x}_{i_j})_0=(\conju{x}_{k_l})$. This is impossible by Lemma \ref{lem:cosh}.	
  	\end{itemize}  
  	Therefore each $\conju{x}_{i_j}$ is disjoint from $\conju{x}$.  As $\conju{x}$ is an arbitrary simple closed geodesic, each $\conju{x}_{i_j}$ is disjoint from every simple closed geodesic on the surface. Hence by Lemma \ref{lem:classical} each $\conju{x}_{i_j}$ is either a constant loop or a loop homotopic to a  puncture or a loop homotopic to a boundary component.
  \end{proof}

\section{Examples of the Goldman and TWG brackets}\label{sec:concrete}

Recall that, on a path connected space, the set of  free homotopy classes of closed directed  curves is in one to one correspondence with conjugacy classes of the fundamental group. Thus, the set of conjugacy classes of the fundamental group minus the conjugacy class of the trivial loop is in two-to-one correspondence with 
the set of free homotopy classes of oriented closed curves minus the class of the trivial loop. Recall also that the  fundamental group of a  surface with boundary is in one to one correspondence with the set of  reduced words in a fixed minimal set of  generators. Hence, the set of free homotopy classes of closed directed curves on a surface with boundary is in one to one correspondence with the set of cyclic, reduced words on a fixed minimal set of generators. (The empty word is also a reduced word in this discussion)  

We will use the above correspondences in this section, where we give examples of the Goldman and TWG bracket on certain surfaces with boundary, first by choosing a set of minimal generators of the fundamental group, and second, denoting each class of curves by a cyclic word in these generators and their inverses. (Given the two-to-one correspondence mentioned above, there are two possible cyclic words denoting a class of undirected curves).
 To simplify the notation, the inverse of a generator $x$ will be denoted by $X$. 
 %If $w$ is  word in this alphabet, (that is, an element of the fundamental group) we will denote by $\widehat{w}$ the conjugacy class of $w$ (that is, the free homotopy class of the directed curve $w$) and by 
Also,   $\conju{w}$  will denote the free homotopy class of a representative of  $w$ after removing the orientation and the basepoint.%(We are abusing notation in order to have less symbols). 

\begin{figure}[ht]
  \centering
    \includegraphics[trim =105mm 80mm 100mm 45mm, clip, width=6cm]{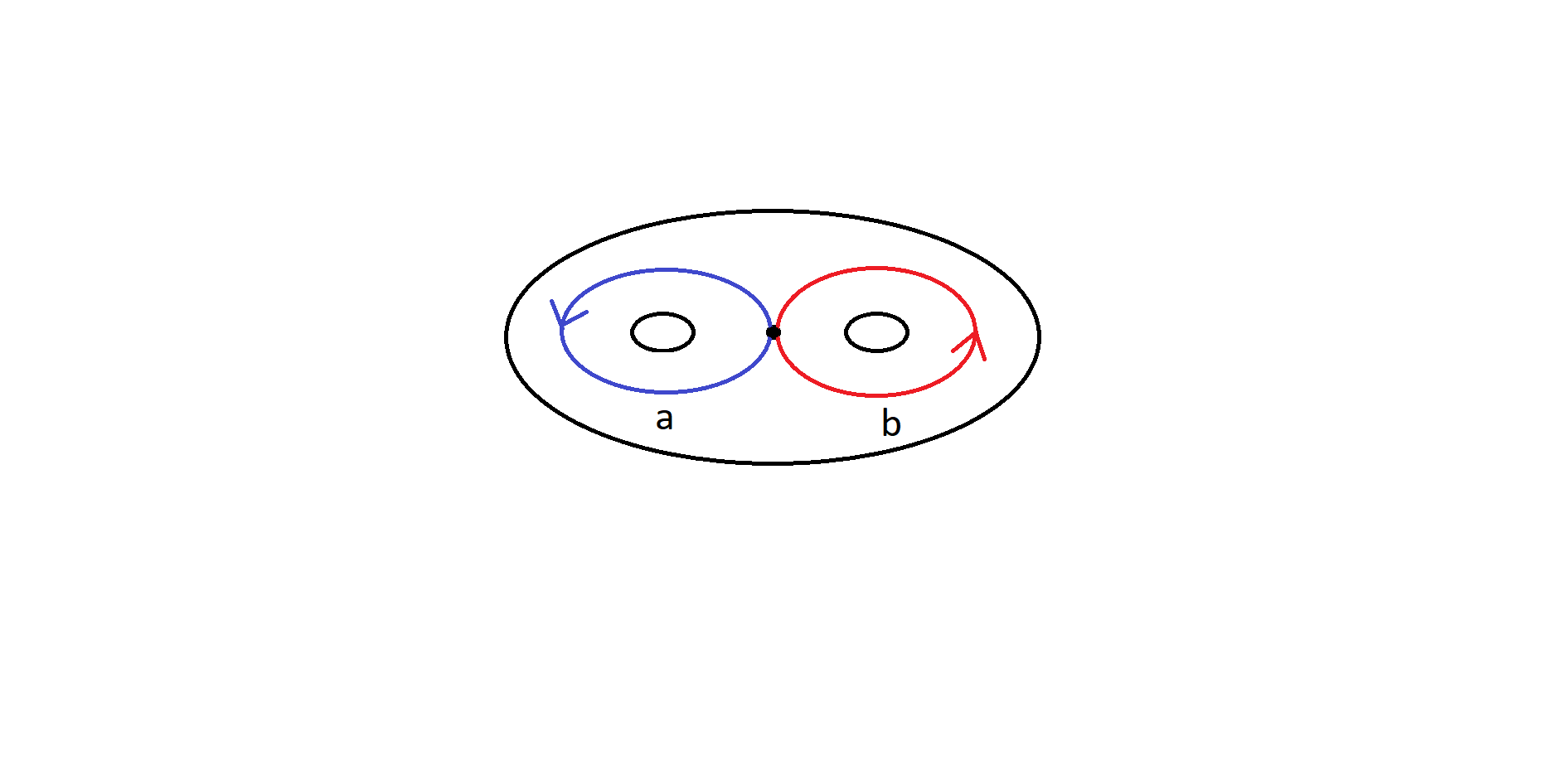}
      \includegraphics[width=4.5cm]{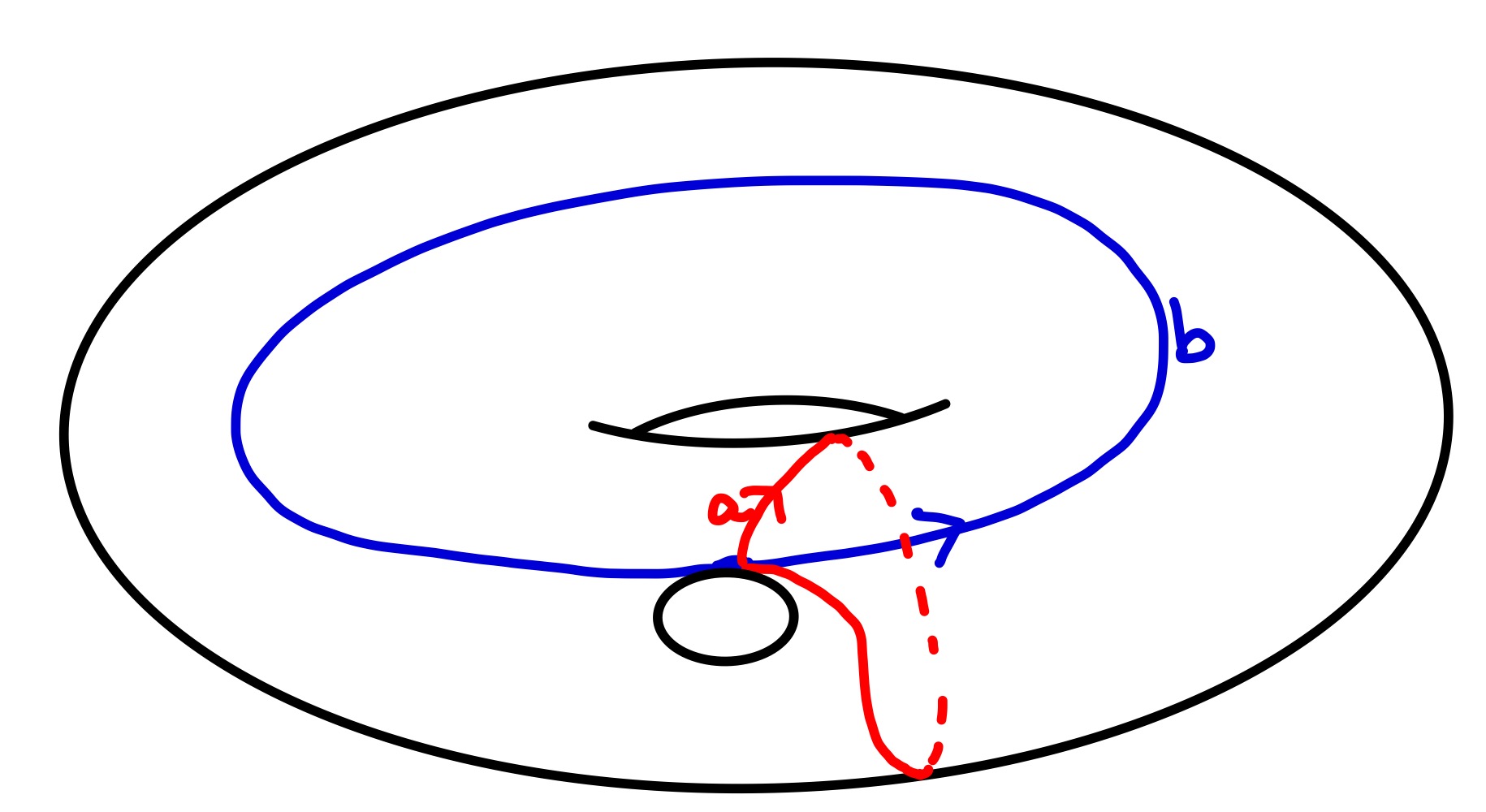}
     \caption{A choice of generators of the pair of pants (left) and the punctured torus (right)
     }
     \label{fig:pop}
\end{figure}

\begin{ex}\label{ex:non-simple} Consider the fundamental group of the triply punctured sphere or pair of pants with generators given by the classes $a$ and $b$ of two curves parallel to two of the boundary components, oriented so that $ab$ goes around the third boundary component (see Figure~\ref{fig:pop}, left).

\begin{figure}[ht]
  \centering

        \includegraphics[width=12cm]{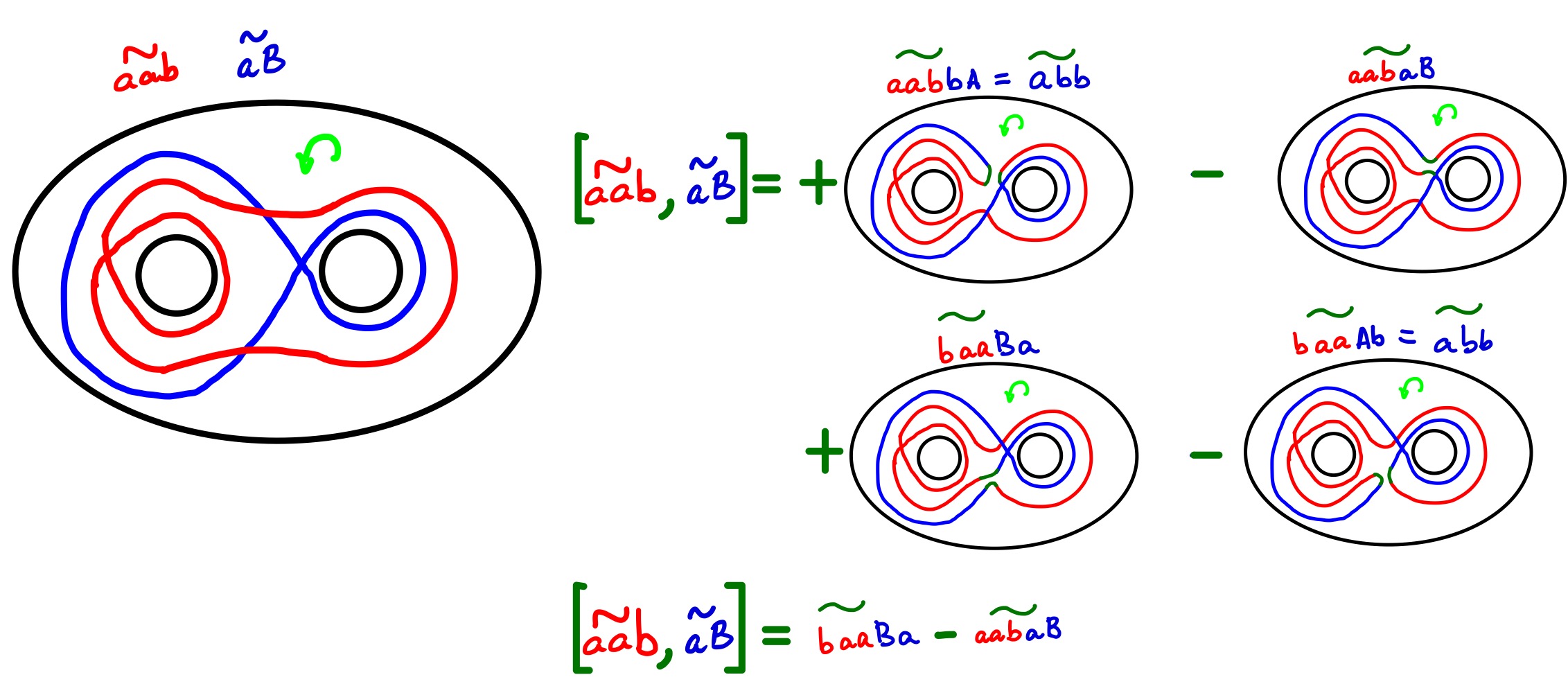}
    
     \caption{Example $[\conju{aab},\conju{aB}]$: Representatives of $\conju{aab}$ and $\conju{aB}$ (left) and computation of the bracket (right)}
     \label{fig:example aab aB}
\end{figure}

\begin{center}
$[\conju{aab},\conju{aB}]=\conju{baaBa}-\conju{Baaba},$\\
\end{center}

\end{ex}

Example~\ref{ex:non-simple} shows the need of the hypothesis of one of the curves being simple in the Intersection Counting Theorem  since $i(\conju{aab},\conju{aB})=2$ but the number of terms of the bracket is less than twice the intersection number, which is  $4$.

\begin{ex}\label{ex:non-simple directed} In the same setting as Example~\ref{ex:non-simple}, it is not hard to check that the Goldman bracket of the conjugacy classes of the directed curves $aaB$ and $aB$  and the two curves have intersection number equal to $2$. 

\end{ex}

\begin{ex}\label{ex:simple} If we consider the punctured torus with fundamental group with standard generators labeled $a$ and $b$  then
 $[\conju{abAb},\conju{aB}]=\conju{aBBB}-\conju{ABaBAb}
 +\conju{AB}-\conju{aBABaB}.$
\begin{figure}[ht]
  \centering
        \includegraphics[width=14cm]{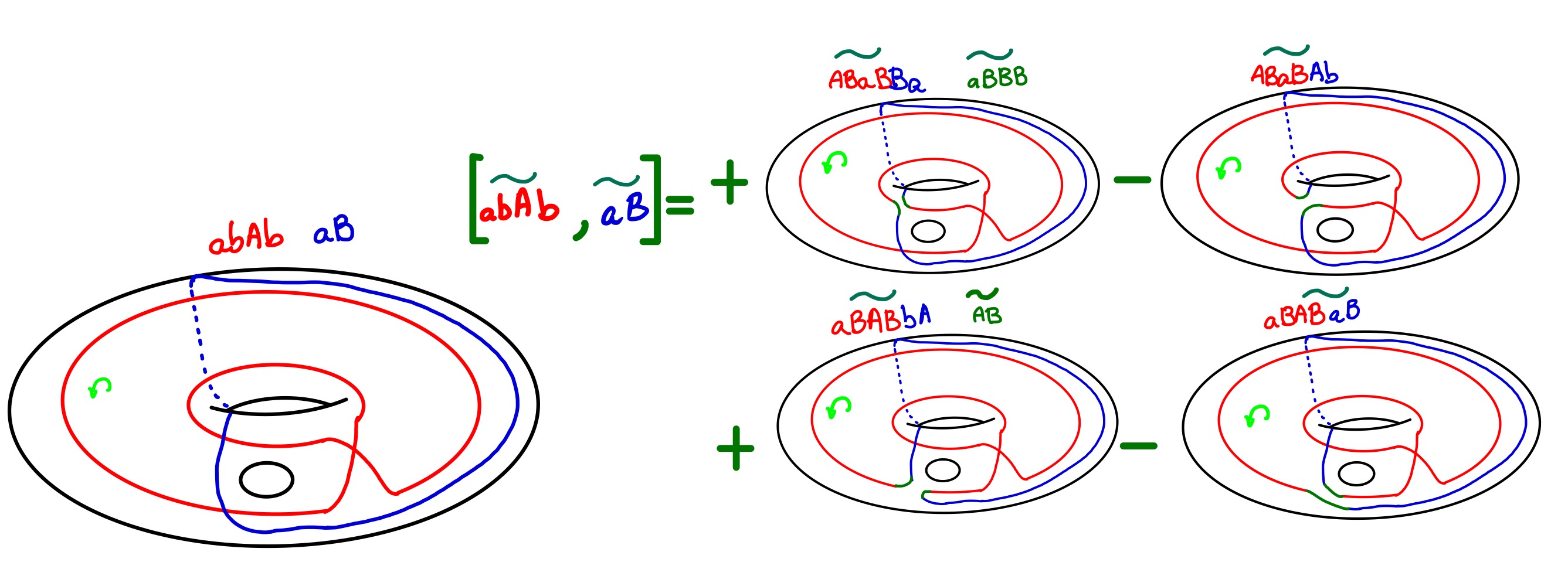}
          \caption{Example $[\conju{abAb},\conju{aB}]$:  Representatives of $\conju{abAb}$ and $\conju{aB}$ (left) and computation of the bracket (right)}
     \label{fig:example torus}
\end{figure} 
\begin{center}
  $[\conju{aab},\conju{aB}]=\conju{baaBa}-\conju{Baaba},$\\
  $[\conju{aaB},\conju{aB}]=\conju{aaBAb}-\conju{aabAB}.$
 \end{center}
\end{ex}

Example~\ref{ex:simple}  illustrates Intersection Counting Theorem.
 In this case,   $i(\conju{aB},\conju{aab})=2$ and the number of terms of the bracket $[ \conju{aab},\conju{ab}]$ is $4$.

\begin{namedtheorem}[Computational] Consider two classes of  curves $\conju{x}$ and $\conju{y}$, If one of the following holds
\begin{itemize}
    \item The word length of  $\conju{x}$ and $\conju{y}$ is less than or equal to $12$ and $x$ and $y$ are in the punctured torus.
    \item The word length of  $\conju{x}$ and $\conju{y}$ is less than or equal to $11$ and $x$ and $y$ are in the pair of pants.
    \item The word length of  $\conju{x}$ and $\conju{y}$ is less than or equal to $8$ and $x$ and $y$ are in $aAbBcC$ the four holed sphere.
    \item The word length of  $\conju{x}$ and $\conju{y}$ is less than or equal to $7$ and $x$ and $y$ are in the punctured genus two surface with  surface word $abABcdCD$.

\end{itemize}
 and the bracket of $\conju{x}$ and $\conju{y}$ is zero then  $\conju{x}$ and $\conju{y}$ have disjoint representatives. In symbols, if  $[\conju{x},\conju{y}]=0$ then $i(\conju{x},\conju{y})=0.$ 

\end{namedtheorem}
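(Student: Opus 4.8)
The statement is a finite verification, so the plan is to describe the reduction to a finite computation and the algorithms that execute it. On each listed surface the fundamental group is free (the punctured torus, the pair of pants, and the four-holed sphere) or one-relator (the punctured genus-two surface with surface word $abABcdCD$), and free homotopy classes of \emph{directed} closed curves are in bijection with cyclic reduced words in the chosen generators; an \emph{undirected} class is such a word together with its formal inverse, with equality tested up to cyclic permutation and inversion. Thus the first step is, for each surface and each bound $N$ in the statement, to enumerate all cyclic reduced words $w$ with $|w|\le N$, bucket them into undirected classes, and set aside the classes in $\widetilde{\mathrm{C}}$ (the trivial loop and the boundary- or puncture-parallel curves), for which the implication is vacuous.

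Next, for each unordered pair of remaining classes $\{\conju{x},\conju{y}\}$ one computes $[\conju{x},\conju{y}]$ combinatorially as in \cite{chas2004combinatorial}: fix tautened (geodesic) representatives, enumerate the intersection points of $x$ and $y$ together with the branch data — for the free-group surfaces this is the usual universal-cover computation comparing the endpoints of lifts, and for the one-relator case one fixes a hyperbolic structure and uses the corresponding geodesic representatives — and, at each intersection point and for each of the two orientation choices, perform the reconnection of Figure~\ref{fig:term}, cyclically reduce the resulting word, forget the orientation, and record the signed class. Summing over intersection points and collecting equal classes in $\K\conju{\pi}$ with cancellation produces $[\conju{x},\conju{y}]$.

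The third step is the actual check: for every pair with $[\conju{x},\conju{y}]=0$, confirm $i(\conju{x},\conju{y})=0$ by running an effective geometric-intersection-number algorithm on the same surface. The workload here is reduced by the Counting Intersection Theorem: whenever $\conju{x}$ (or $\conju{y}$) is simple, vanishing of the bracket already forces $i(\conju{x},\conju{y})=0$, so only pairs of non-simple classes with vanishing bracket require a separate intersection computation, and in the enumerated range all such pairs turn out to be disjoint.

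The main obstacle is computational cost rather than anything conceptual: the number of cyclic reduced words of length $\le N$ in a rank-$r$ free group grows like $(2r-1)^N$, so the number of pairs grows quadratically in that, and each pair requires an intersection enumeration together with word reductions; this is precisely why the attainable $N$ shrinks as the surface grows more complicated, from $12$ on the punctured torus to $7$ on the genus-two surface. The delicate bookkeeping — reconnecting each pair of branches at a multiple point, normalizing undirected classes up to cyclic permutation and inversion, and fully performing cancellation in $\K\conju{\pi}$ before declaring a bracket zero — must be implemented carefully, but raises no difficulty of principle.
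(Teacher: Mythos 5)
Your proposal is essentially the paper's approach: the paper gives no written proof of this statement beyond reporting a computer verification, and your outline (enumerate cyclic reduced words up to the stated lengths, normalize undirected classes up to cyclic permutation and inversion, compute the TWG bracket combinatorially with full cancellation, and check disjointness for the vanishing pairs) is exactly that verification. One small correction: the four-holed sphere and the punctured genus-two surface also have free fundamental groups (the given ``surface word'' only records the boundary/puncture word), so no one-relator or hyperbolic-geodesic machinery is needed there, and the central classes you set aside satisfy the implication trivially (they are disjoint from everything) rather than vacuously.
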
 

The previous Computational Theorem lead us to the following conjecture.

\begin{conj*} If the TWG-Lie bracket of two classes of undirected curves is zero, then the classes have disjoint representatives.
\end{conj*}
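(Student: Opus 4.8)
The plan is to push the angle technique of Section~\ref{sec:prelim} as far as it will go, and then isolate the one place where it currently stops. Fix a hyperbolic metric $X$ and realize $\conju{x}$, $\conju{y}$ by $X$-geodesics $x,y$ meeting in exactly $n=i(\conju{x},\conju{y})$ transversal points. Suppose $n>0$ but $[\conju{x},\conju{y}]=0$. The bracket is the formal sum $\sum_{P}\conju{(x*_Py)_0}-\sum_{P}\conju{(x*_Py)_\infty}$, so (using that $\K$ contains $\Z$) it vanishes exactly when the two multisets of basis elements coincide; hence there is a bijection $\sigma$ of the set of $(x,y)$-intersection points with $\conju{(x*_Py)_0}=\conju{(x*_{\sigma(P)}y)_\infty}$ for every $P$. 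By Lemma~\ref{lem:cosh} these homotopies force, for all $Y\in\T$,
$$\cos\phi_P(Y)=-\cos\phi_{\sigma(P)}(Y),\qquad\text{i.e.}\qquad \phi_P(Y)+\phi_{\sigma(P)}(Y)=\pi .$$
In particular $\sigma$ has no fixed point (an intersection angle is never identically $\pi/2$), the multiset $\{\cos\phi_P(Y)\}_P$ is invariant under negation for every metric $Y$, and, summing, $\sum_P\cos\phi_P(Y)=0$ for all $Y\in\T$. The conjecture is therefore equivalent to the assertion that, when $n>0$, no such $\sigma$ exists.

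When $\conju{x}$ has a simple representative this is immediate, and worth recording: by Lemma~\ref{lem:decreasing} each $\phi_P(\mathcal{E}_x(t))$ strictly decreases, hence $\sum_P\cos\phi_P(\mathcal{E}_x(t))$ strictly increases and cannot be identically $0$; equivalently the supplementary relation $\phi_P+\phi_{\sigma(P)}\equiv\pi$ is killed by twisting along $x$. This reproves the simple case and makes transparent why the Counting Intersection Theorem needs that hypothesis. For the general conjecture the strategy is to replace the earthquake along $x$ by an earthquake along an auxiliary \emph{simple} closed geodesic $c$: a standard change-of-coordinates argument produces $c$ crossing $x\cup y$, and a Wolpert-type variational formula (in the spirit of \cite{wolpert_fenchel-nielsen_1982}, \cite{kerckhoff1983nielsen}) expresses $\frac{d}{dt}\phi_P(\mathcal{E}_c(t))\big|_{t=0}$ as an explicit signed sum over the points of $c\cap(x\cup y)$ lying on the two branches at $P$. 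One then wants to choose $c$ so that $\frac{d}{dt}\bigl(\sum_P\cos\phi_P(\mathcal{E}_c(t))\bigr)\big|_{t=0}\neq 0$, contradicting $\sum_P\cos\phi_P\equiv 0$. To control the bookkeeping I would feed back in the finer data: the homotopies defining $\sigma$ identify certain piecewise-geodesic zigzag loops in $\H$, and, exactly as in the proof of Lemma~\ref{lem:smaller angle}, a supplementary relation that persists in a neighbourhood of $X$ can only be realized by an orientation-\emph{reversing} isometry matching the two zigzags; tracking how such an isometry must meet the lift of $c$ should pin down the sign.

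The main obstacle is precisely this last point. Unlike the earthquake along $x$ itself, an earthquake along a general simple curve $c$ moves the various angles $\phi_P$ in different directions, and a priori the signed contributions to $\frac{d}{dt}\sum_P\cos\phi_P$ could cancel for every admissible $c$, leaving the infinitesimal argument inconclusive. Escaping this seems to require either a genuinely global deformation — for instance pinching $c$ and analyzing the asymptotics of $\ell_{(x*_Py)_0}$ and $\ell_{(x*_Py)_\infty}$ as one approaches the corresponding boundary stratum of $\T$, where the multiset equality of lengths becomes rigid — or a combinatorial input, analogous to the free-product-with-amalgamation analysis of \cite{chas2010minimal}, that rules out the pairing $\sigma$ directly from the structure of the zigzag words. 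We have not managed to complete either route; the verifications summarized in the Computational Theorem are what lead us to record this only as a conjecture.
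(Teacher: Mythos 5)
There is no complete proof to compare against: the statement you are addressing is stated in the paper only as a conjecture, supported by the computer experiments recorded in the Computational Theorem, and your proposal likewise stops short of a proof --- which you candidly acknowledge. The part of your argument that is solid is exactly the paper's angle technique. Since $\K$ contains $\Z$, the vanishing of $[\conju{x},\conju{y}]$ does force the multiset of $0$-smoothings to equal the multiset of $\infty$-smoothings, hence a pairing $\sigma$ of $(x,y)$-intersection points with $\conju{(x*_Py)_0}=\conju{(x*_{\sigma(P)}y)_\infty}$, and Lemma~\ref{lem:cosh} then gives $\phi_P(Y)+\phi_{\sigma(P)}(Y)=\pi$ for all $Y\in\T$; your re-derivation of the simple case from Lemma~\ref{lem:decreasing} is the content of Lemma~\ref{lem:zero-infinty} and of the Counting Intersection Theorem, so nothing new is obtained there.

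The genuine gap is the step you yourself flag, and it is worth being precise about why it is a gap and not merely bookkeeping. Lemma~\ref{lem:decreasing} (Kerckhoff, plus \cite{kabiraj2018equal}) gives monotonicity of $\phi_P$ only under earthquakes along one of the two curves forming the angle, and only when that curve is simple; for an earthquake along an auxiliary simple curve $c$ there is no sign control on $\frac{d}{dt}\phi_P(\mathcal{E}_c(t))$, so the proposed inequality $\frac{d}{dt}\sum_P\cos\phi_P(\mathcal{E}_c(t))\big|_{t=0}\neq 0$ is not forced by any result in the paper, and Example~\ref{ex:non-simple} shows that partial pairings with supplementary angles really do occur for non-simple curves, so any argument must exclude only a \emph{complete} pairing --- a purely infinitesimal, per-point sign argument cannot see that distinction. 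A secondary unproven assertion in your reduction: the parenthetical claim that $\sigma$ has no fixed point amounts to saying no intersection angle is identically $\pi/2$ on all of $\T$, and the only tool the paper has for ruling out metric-independent angle relations is again Lemma~\ref{lem:decreasing}, unavailable when neither curve is simple; this too would need a separate argument (e.g.\ a variational formula of Wolpert type with controlled sign, which is precisely what is missing). So the proposal is a reasonable research program consistent with the paper's methods, but the conjecture remains open both in the paper and in your write-up.
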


\section{Proof of the Jacobi identity for the TWG-bracket}\label{sec:jacobi}

A pair of points $(P,Q), P \in x \cap y, Q \in y\cap z$ determines four terms of the double bracket  $[[\conju{x},\conju{y}],\conju{z}]$, each of these four types represented in the upper row of Figure~\ref{fig:jacobi}.
%Analogously, a pair of points $(P,Q), P \in x \cap y, Q \in y\cap z$ determines four terms of the double bracket  $[[\conju{x},\conju{y}],\conju{z}]$, as in Figure~\ref{fig:jacobi}.
Repeating this reasoning for the other two brackets, $[[\conju{y},\conju{z}],\conju{x}]$ and $[[\conju{z},\conju{x}],\conju{y}]$, it is not hard to check the Jacobi identity.
$$[[\conju{x},\conju{y}],\conju{z}]+[[\conju{y},\conju{z}],\conju{x}] +[[\conju{z},\conju{x}],\conju{y}]=0.$$

\begin{figure}[ht]
  \centering
    \includegraphics[width=\textwidth]{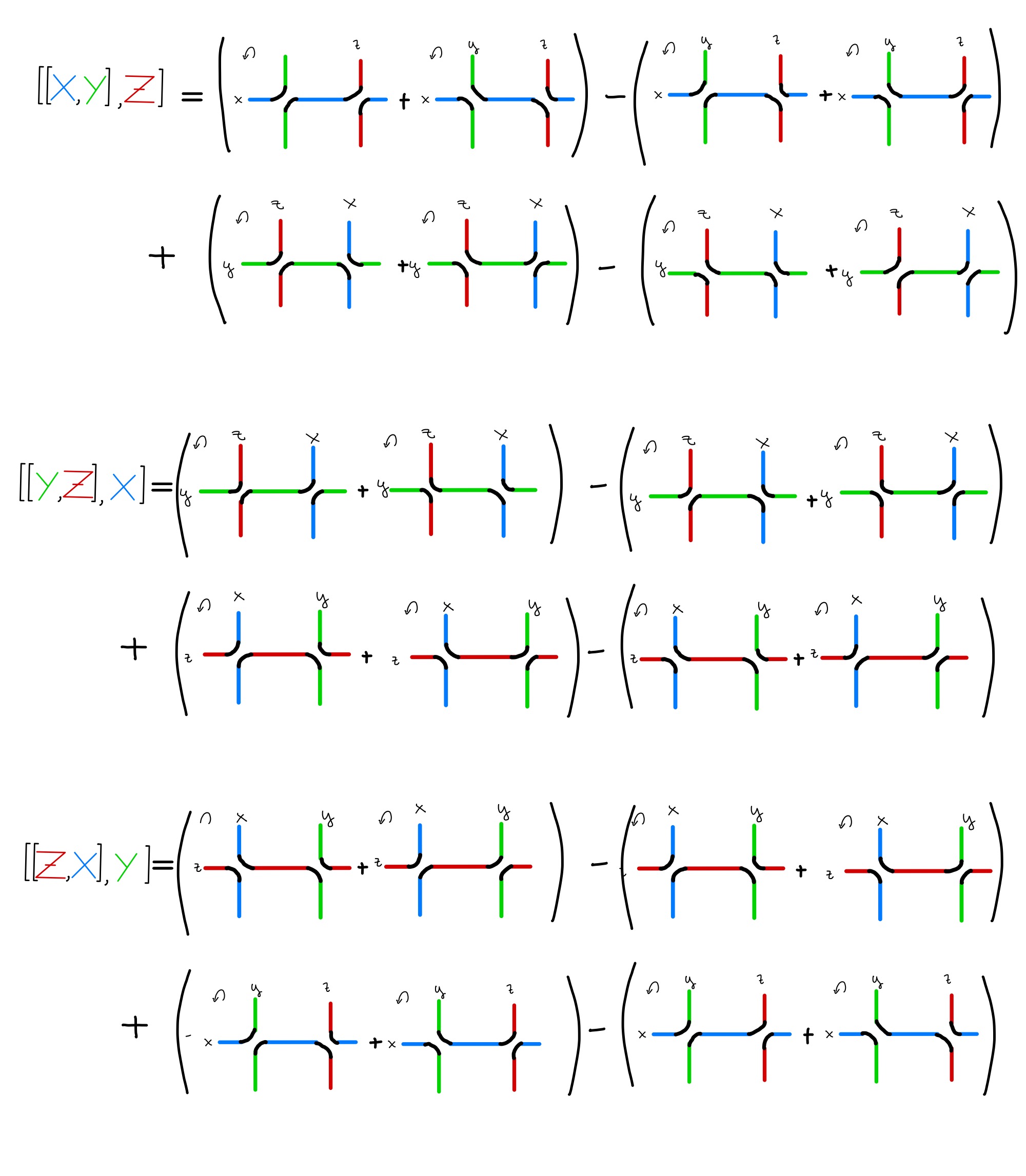}
     \caption{Representation of the double brackets $[[\conju{x},\conju{y}],\conju{z}]$, $[[\conju{y},\conju{z}],\conju{x}]$, and  $[[\conju{z},\conju{x}],\conju{y}]$}\label{fig:jacobi}
\end{figure}
\section{Goldman's definition of the TWG-bracket}\label{sec:Goldman}

In this section we review the definition of the Goldman Lie algebra on directed curves,  Goldman's definition of the TWG-Lie algebra of undirected curves and prove that there is an isomorphism between Goldman's  and the definition of the TWG-Lie algebra we gave in the introduction.

Denote by $\pi$ the set of  free homotopy classes of directed closed curves on $\Sigma$,  by $\la \ox\ra $ the free homotopy class of a   directed closed curve $\ox$,   and by $\K \pi$ the free module spanned by $\pi$. 

The \emph{Goldman Lie bracket on $\K\pi$} is the linear extension to $\K\pi$ of the bracket of two free homotopy classes $\la \ox\ra $ and $\la \oy\ra $ defined by
$$[\la \ox\ra,\la \oy\ra]=\sum_{P\in \ox\cap \oy}\e_P\la \ox *_P\oy\ra.$$ Here, the representatives $\ox$ and $\oy$ are chosen so that they intersect  transversely in a set of double points  $\ox \cap \oy$, 
$\e_P$ denotes the sign of the intersection between $\ox$ and $\oy$ at an intersection point $P$, and $\ox *_P\oy$ denotes the loop product of ${\ox}$ and ${\oy}$ at $P$.

Goldman \cite{goldman_invariant_1986} proved that this bracket is well defined, skew-symmetric and satisfies the Jacobi identity on $\K\pi$.  In other words, $\K\pi$  is a Lie algebra.

There is a natural involution  $\map{\iota}{\pi}{\pi}$ defined by $\iota(\la \ox\ra)=\la \bar{\ox}\ra$ where $\bar{\ox}$ denotes the curve $\ox$ with opposite orientation. By extending $\iota$ linearly to $\K \pi$ we obtain a $\K$ -linear involution   $\smap{\iota}{\K\pi}$. The invariant subspace of $\iota$, denoted by $S$   is   a Lie subalgebra of  $\K\pi$ \cite[Subsection 5.12]{goldman_invariant_1986}. 

Now we prove that the  subalgebra $S$ is isomorphic to the TWG-Lie algebra $\K\conju{\pi}$.  

First observe that $S$ is generated by the elements of the form $\la\ox\ra+\la \bar{\ox} \ra$.  A straightforward computation in $\K\conju{\pi}$ shows that
\begin{equation}\label{eq:computation}
[\la\ox\ra+\la \bar{\ox} \ra,\la\oy\ra+\la \bar{\oy} \ra]=[\la\ox\ra,\la\oy\ra]+\overline{[\la\ox\ra,\la\oy\ra]}+[\la\ox\ra,\la\bar{\oy}\ra]+\overline{[\la\ox\ra,\la\bar{\oy}\ra]}   
\end{equation}
where the ``change direction''  operator $\overline{~\cdot~}$ is extended to $\K\conju{\pi}$ by linearity.

Next define a map from $S$ to  $\K\conju{\pi}$, by sending each element of $S$  of the form $\la \ox\ra+\la\bar{\ox}\ra$ (that is, each element of a the geometric basis of $S$) to the undirected free homotopy class $u_{\la \ox \ra}$ defined by ``forgetting'' the direction of $\ox,$ and considering the free homotopy class.  Extend $u$ to $S$ by linearity.

\begin{figure}[ht]
  \centering
    \includegraphics[trim = 10mm 55mm 40mm 35mm, clip, width=12cm]{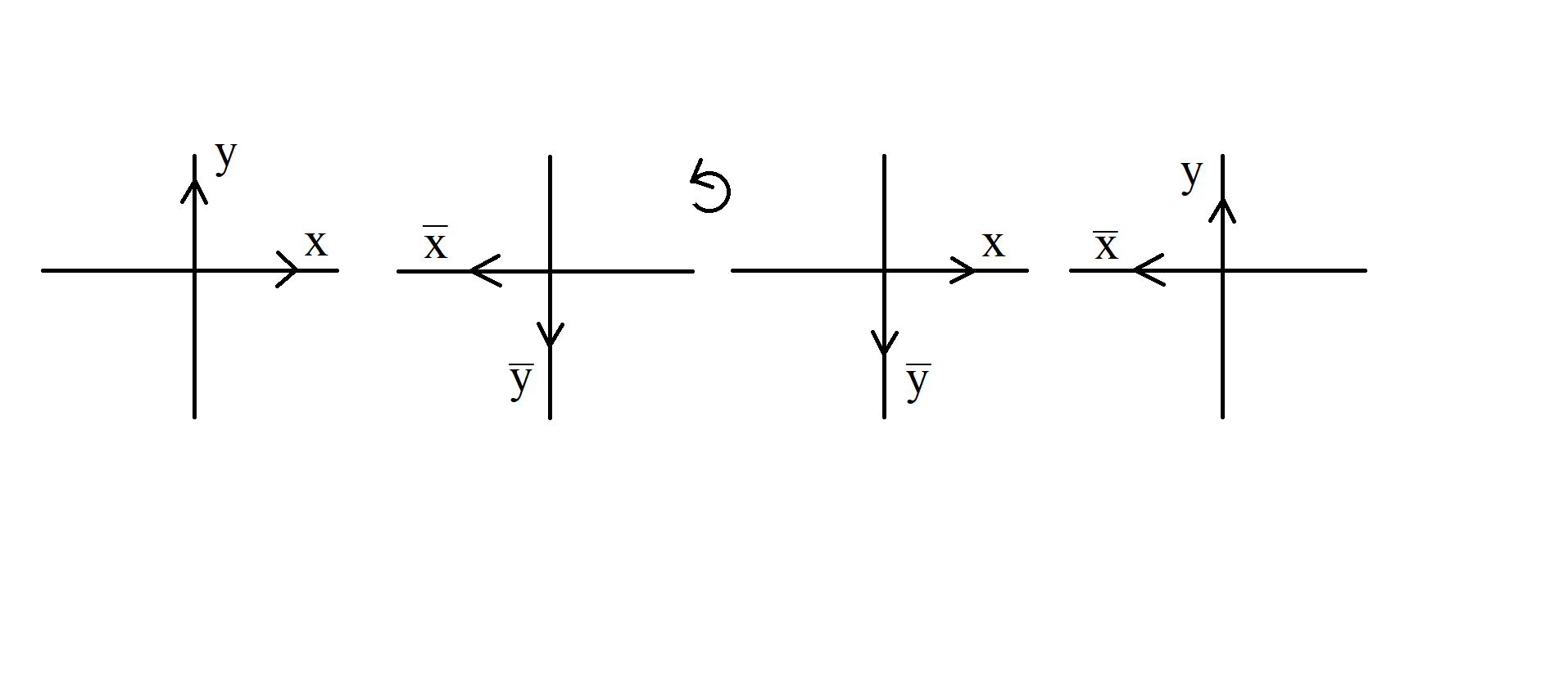}
     \caption{Sign of the elements.}\label{fig:orientation}
\end{figure}

Observe that (Figure \ref{fig:orientation})
$$\e_P(\ox,\oy)=\e_P(\bar{\ox},\bar{\oy})=-\e_P(\ox,\bar{\oy})=-\e_P(\bar{\ox},\oy).$$
Also, if $\e_P(\ox,\oy)=1$ then 
$$u_{\la \ox *_P \oy\ra} = \conju{ ( \ox *_P \oy)}_0\,\, \text{and} \,\,u_{\la \ox *_P \bar{\oy}\ra} = \conju{ ( \ox*_P \oy)}_\infty.$$
 and if $\e_P(\ox,\oy)=-1$ then  
$$u_{\la \ox *_P \oy\ra} = \conju{ ( \ox*_P \oy)}_\infty\,\, \text{and} \,\,u_{\la \ox *_P \bar{\oy}\ra} = \conju{ ( \ox*_P \oy)}_0.$$

Therefore computing the bracket in $\K\conju{\pi}$, we observe 
$$
[u_{\la \ox\ra},u_{\la \oy\ra}]=
\sum_{P \in \ox \cap \oy, \e_P=1}
\left(u_{\la\ox *_P \oy \ra}- u_{\la\ox *_P \bar{\oy}\ra}\right)  - \sum_{P \in \ox \cap \oy, \e_P-1}\left(u_{\la\ox *_P \bar{\oy}\ra }- u_{\la\ox *_P \oy\ra}\right) 
$$
$$
=\sum_{P \in \ox \cap \oy} \e_P \left(u_{\la\ox *_P \oy \ra}+ u_{\la \ox *_P \bar{\oy}\ra}\right)= u_{[\la\ox\ra,\la\oy\ra]}+u_{[\la\ox\ra,\la\bar{\oy}\ra]}
$$
On the other hand, by Equation~(\ref{eq:computation}) by applying $u$ to the bracket $[\la\ox\ra+\la \bar{\ox} \ra,\la\oy\ra+\la \bar{\oy} \ra]$ we obtain 
$u_{[\la\ox\ra,\la\oy\ra]}+u_{[\la\ox\ra,\la\bar{\oy}\ra]}.$ This shows that $u$ is a Lie algebra isomorphism, as desired.

\section{Proof of the Canonical Decomposition Theorem}\label{sec:candec}

We now prove the Canonical Decomposition Theorem. 
Consider two undirected classes of curves $\alpha$ and $\beta$, not null-homotopic and not parallel to a boundary component in the surface $\Sigma$. Fix a hyperbolic metric $X$ on a surface, which is homotopy equivalent to $\Sigma$, so that all boundary components are punctures. 
Let $x$ and $y$ be $X$-geodesic representatives of $\alpha$ and $\beta$. By Theorem~\ref{theo:beardon},  both $(x*_P y)_0$ and $(x*_P y)_\infty$  are hyperbolic and have positive length. Thus, they are not parallel to boundary components and not null-homotopic.

The proof in the case of the Goldman Lie algebra follows similarly.

\bibliographystyle{plain}

\bibliography{gla.bib}
\end{document}